\theoremstyle{definition}
\newtheorem{definition}{Definition}[section]
\newtheorem{remark}{Remark}[section]
\theoremstyle{plain}
\newtheorem{proposition}[definition]{Proposition}
\newtheorem{theorem}[definition]{Theorem}
\newtheorem{lemma}[definition]{Lemma}
\renewcommand*\env@matrix[1][*\c@MaxMatrixCols c]{%
  \hskip -\arraycolsep
  \let\@ifnextchar\new@ifnextchar
  \array{#1}}
\DeclareMathOperator*{\minimize}{\min\!imize\enskip}
\DeclareMathOperator*{\maximize}{\max\!imize\enskip}
\DeclareMathOperator{\diam}{diam}
\newcommand{\mat}[1]{{\mathbf{#1}}}
\newcommand{\vect}[1]{{\mathbf{#1}}}
\newcommand{\calX}{\mathcal{X}}
\newcommand{\scrM}{\mathscr{M}}
\newcommand{\scrS}{\mathscr{S}}
\newcommand{\xx}{\vect{x}}
\newcommand{\PhiS}{\boldsymbol{\Phi}_\scrS}
\newcommand{\mS}{\vect{m}_\scrS}
\newcommand{\bbone}{\mathbbm{1}}
\newenvironment{acknowledgements}
{\subsection*{Acknowledgements}}
\begin{document}

\title[Nonlinear Feature Selection Using Secants]{Inadequacy of Linear Methods for Minimal Sensor Placement and Feature Selection in Nonlinear Systems; a New Approach Using Secants}

\author{Samuel E. Otto}
\address{Department of Mechanical and Aerospace Engineering, Princeton
  University, Princeton, NJ 08544}
\email{sotto@princeton.edu}

\author{Clarence W. Rowley}

\thanks{This research was supported by the Army Research Office under grant
  number W911NF-17-1-0512.  S.E.O. was supported by a National Science Foundation Graduate Research Fellowship Program under Grant No. DGE-2039656.
Any opinions, findings, and conclusions or recommendations expressed in this
material are those of the authors and do not necessarily reflect the views of
the National Science Foundation.}

\keywords{nonlinear inverse problems, state estimation, feature selection, manifold learning, greedy algorithms, submodular optimization, shock-turbulence interaction, reduced-order modeling}

\begin{abstract}
  Sensor placement and feature selection are critical steps in engineering, modeling, and data science that share a common mathematical theme: the selected measurements should enable solution of an inverse problem.
  Most real-world systems of interest are nonlinear, yet the majority of available techniques for feature selection and sensor placement rely on assumptions of linearity or simple statistical models.
  We show that when these assumptions are violated, standard techniques can lead to costly over-sensing without guaranteeing that the desired information can be recovered from the measurements.
  In order to remedy these problems, we introduce a novel data-driven approach for sensor placement and feature selection for a general type of nonlinear inverse problem based on the information contained in secant vectors between data points.
  Using the secant-based approach, we develop three efficient greedy algorithms that each provide different types of robust, near-minimal reconstruction guarantees.
  We demonstrate them on two problems where linear techniques consistently fail: sensor placement to reconstruct a fluid flow formed by a complicated shock-mixing layer interaction and selecting fundamental manifold learning coordinates on a torus.
\end{abstract}

\maketitle


\section{Introduction}
\label{intro}

Reconstructing the state of complex systems like fluid flows, chemical processes, and biological networks from measurements taken by a few carefully chosen sensors is a crucial task for controlling, forecasting, and building simplified models of these systems.
In this setting it is important to be able to reconstruct the relevant information about the system using the smallest total number of measurements which includes minimizing the number of sensors to reduce cost, and using the shortest possible measurement histories to shorten response time.
Feature selection in statistics and machine learning is a related task where one tries to find a small subset of measured variables (features) in the available data that allow one to reliably predict a quantity of interest.

Nonlinear reconstruction can yield large improvements over linear reconstruction when the sensors or features are carefully selected \cite{Guyon2003introduction}.
Successful nonlinear reconstruction techniques include neural networks \cite{Nair2019integrating},\cite{Nair2019leveraging}, deep nonlinear state estimators \cite{Hosseinyalamdary2018deep}, \cite{Krishnan2017structured}, and convex $\ell^1$ minimization to reveal sparse coefficients in learned libraries \cite{Wright2008robust}, \cite{Callaham2019robust}.
The need for nonlinear representation and reconstruction is also recognized in the reduced-order-modeling community where it is called ``nonlinear Galerkin'' approximation \cite{Lee2020model}, \cite{Rega2005dimension}, \cite{Marion1989nonlinear}.
These methods are necessary because in many systems of interest, the state is found to lie near a low-dimensional underlying manifold that is curved in such a way that it is not contained in any low-dimensional subspace \cite{Ohlberger2016reduced}.
We will show that the best possible linear reconstruction accuracy is fundamentally limited by the number of measurements (features) and the fraction of the variance that is captured in the principal subspace \cite{Hotelling1933analysis} of that dimension.
In essence, any linear representation in a subspace is ``too loose'' and demands an excessive number of measurements to even have a hope of accurately reconstructing the state using linear functions.
Nonlinear reconstruction is much more powerful, as Whitney's celebrated embedding theorem (Theorem.~5, \cite{Whitney1944self}) shows that states on any $r$-dimensional smooth manifold can be reconstructed using $2 r$ carefully chosen measurements.
If the measurements must be linear functions of the state on a compact sub-manifold of $\mathbb{R}^n$ then $2r+1$ can be found \cite{Whitney1936differentiable}.

With many measurements available from our sensors (though not necessarily ones that achieve Whitney's results), the problem that remains is to properly choose them so that nonlinear reconstruction is possible and robust to noise.
While nonlinear reconstruction has proved to be extremely advantageous, the overwhelming majority of sensor placement and feature selection methods rely on measures of linear or Gaussian reconstruction accuracy as an optimization criteria.
Such methods include techniques based on sampling modal bases \cite{Yildirim2009efficient}, \cite{Manohar2018data}, \cite{Chaturantabut2010nonlinear}, \cite{Drmac2016new}, \cite{Businger1965linear}, linear dynamical system models \cite{Mons2017optimal}, \cite{Dhingra2014admm}, \cite{Summers2015submodularity}, \cite{Summers2016submodularity}, \cite{tzoumas2016sensor}, Bayesian and maximum likelihood optimality in linear inverse problems \cite{Chaloner1995bayesian}, \cite{Joshi2008sensor}, \cite{Shamaiah2010greedy}, information-theoretic criteria under Gaussian or other simple statistical models \cite{Krause2008near}, \cite{Caselton1984optimal}, \cite{Caselton1992quality}, \cite{Shewry1987maximum}, \cite{Sebastiani2000maximum}, and sparse linear approximation in dictionaries using LASSO \cite{Tibshirani1996regression}, \cite{Yuan2006model} or orthogonal matching pursuit \cite{Pati1993orthogonal}, \cite{Tropp2005simultaneous}.
We provide an overview of a representative collection of these methods that we shall use as a basis for comparison in Section~\ref{sec:linear_Gaussian}. 

We show that relying on these linear, Gaussian techniques to identify sensors that will be used for nonlinear reconstruction can lead to costly over-sensing when the underlying manifold is low-dimensional, but the data do not lie in an equally low-dimensional subspace.
This effect is most pronounced when the most energetic (highest variance) components of the data are actually functions of less-energetic components, but not vice versa.
In such cases, the linear techniques are consistently ``tricked'' into sensing the most energetic components while failing to capture the important less energetic ones that can actually be used for minimal reconstruction.
These situations are not merely academic, and they actually abound in physics and in data science.
As we shall discuss in Section~\ref{sec:problems_with_linear}, the problem appears in mixing layer fluid flows and in the presence of shock waves, which are both ubiquitous in aerodynamics.
The presence of important low-energy sub-harmonic frequencies is also generic behavior after a period-doubling bifurcation, which is a common route to chaos, for instance in ecosystem collapse \cite{tzuk2019period} and cardiac arrhythmia \cite{Quail2015predicting}.
In data science, the problem is most pronounced when we try to select fundamental nonlinear embedding coordinates for a data set using manifold learning techniques like kernel PCA \cite{Scholkopf1998}, Laplacian eigenmaps \cite{Belkin2003}, diffusion maps \cite{Coifman2006}, and Isomap \cite{Tenenbaum2000global} as we shall discuss in Section~\ref{subsec:manifold_learning}.

In order to address the limitations of linear, Gaussian methods for sensor placement and feature selection demonstrated in the first half of the paper, we develop a novel data-driven approach based on consideration of secant vectors between states in Section~\ref{sec:secants}.
Related secant-based approaches have been pioneered by \cite{Broomhead2005dimensionality}, \cite{Jamshidi2007towards}, \cite{Hegde2015numax}, \cite{Sun2017sparse} for the purpose of finding optimal embedding subspaces.
While their considerations of secants lead to continuous optimization problems over subspaces, our considerations of secants lead to combinatorial optimization problems over sets of sensors.
We develop three different secant-based objectives together with greedy algorithms that each provide different types of robust, near-minimal reconstruction guarantees for very general types of nonlinear inverse problems.
The guarantees stem from the underlying geometric information that is captured by secants and encoded in our optimization objectives.
Moreover, the objectives we consider each have the celebrated diminishing returns property called \emph{submodularity}, allowing us to leverage the classical results of G. L. Nemhauser and L. A. Wolsey et al. \cite{Nemhauser1978analysis}, \cite{Wolsey1982analysis} to guarantee the performance of efficient greedy algorithms for sensor placement.
We also leverage concentration of measure results in order to prove performance guarantees when the secants are randomly down-sampled, enabling computational scalability to very large data sets.
Each of these techniques demonstrates greatly improved performance compared to a large collection of linear techniques on a canonical shock-mixing layer flow problem \cite{Yee1999low} as well as for selecting fundamental manifold learning coordinates.

\section{Background on Linear, Gaussian, Techniques}
\label{sec:linear_Gaussian}

The predominant sensor placement, feature selection, and experimental design techniques available today rely on linear and/or Gaussian assumptions about the underlying data: that is, that the data live in a low-dimensional subspace and/or have a Gaussian distribution.
Under these assumptions, it becomes easy to quantify the performance of sensors, features, or experiments, using a variety of information theoretic, Bayesian, maximum likelihood, or other optimization criteria.
A comprehensive review is beyond the scope of this paper, and of course we do not claim that linear methods always fail.
Rather, we argue that because the underlying linear, Gaussian assumptions are violated in many real-world problems, we cannot expect them to find small collections of sensors that enable nonlinear reconstruction of the desired quantities.
We shall briefly review the collection of linear techniques that we shall compare to throughout this work and that we think are representative of the current literature.

\subsection{(Group) LASSO}
The Least Absolute Shrinkage and Selection Operator (LASSO) method introduced by R. Tibshirani \cite{Tibshirani1996regression} is a highly successful technique for feature selection in machine learning that has found additional applications in compressive sampling recovery \cite{Candes2009near} and system identification \cite{Brunton2016discovering}.
A generalization by M. Yuan and Y. Lin \cite{Yuan2006model} called group LASSO is especially relevant for sensor placement since it allows measurements to be selected in groups that might come from the same sensor at different instants of time.
Suppose we are given a collection of data consisting of available measurements $\vect{m}_j(\xx_i)$, $j=1,\ldots,M$ along with relevant quantities $\vect{g}(\xx_i)$ that we wish to reconstruct over a collection of states $\xx_i$, $i=1,\ldots,N$.  The group LASSO convex optimization problem takes the form
\begin{equation}
    \minimize_{\mat{A}_1, \ldots, \mat{A}_M} \sum_{i=1}^N\Big\Vert \vect{g}(\xx_i) - \sum_{j=1}^M\mat{A}_j\vect{m}_j(\xx_i) \Big\Vert_2^2 + \gamma \sum_{j=1}^M \left\Vert \mat{A}_j \right\Vert_F
\end{equation}
and tries to reconstruct the targets as accurately as possible using a linear combination of the measurements subject to a sparsity-promoting penalty.
The strength of the penalty depends on the user-specified parameter $\gamma \geq 0$ and forces the coefficient matrices $\mat{A}_j$ on many of the measurement groups to be identically zero.
Those coefficient matrices with nonzero entries indicate the sensors that should be used to \emph{linearly} reconstruct the target variables with high accuracy.

\subsection{Determinantal ``D''-Optimal Selection}
Suppose the state $\xx$ has a prior probability distribution with covariance $\mat{C}_{\xx}$ and the target variables $\vect{g}(\xx)$ and measurements $\vect{m}_j(\xx)$, $j=1,\ldots,M$ are linear functions of the state
\begin{equation}
    \vect{g}(\xx) = \mat{T}\xx, \qquad \vect{m}_j(\xx) = \mat{M}_j\xx + \vect{n}_j
\end{equation}
where $\vect{n}_j$ is the mean-zero, state independent, noise from the $j$th sensor with covariance $\mat{C}_{\vect{n}_j}$.
Then, if $\mat{M}_{\scrS}$ is a matrix with rows given by $\mat{M}_j$ and $\mat{C}_{\vect{n}_{\scrS}}$ is a block diagonal matrix formed from $\mat{C}_{\vect{n}_j}$, for $j$ in a given set of sensors $\scrS$, then the optimum (least-squares) linear estimate of $\vect{g}(\xx)$ given $\vect{m}_{\scrS}(\xx)$ has error covariance
\begin{equation}
    \mat{C}_{\vect{e}}(\scrS) = \mat{T}\left( \mat{C}_{\xx}^{-1} + \mat{M}_{\scrS}^T\mat{C}_{\vect{n}_{\scrS}}^{-1}\mat{M}_{\scrS} \right)^{-1}\mat{T}^T.
    \label{eqn:OLE_error_covariance}
\end{equation}
If $\xx$ and the noise are independent Gaussian random variables then Eq.~\textbf{\ref{eqn:OLE_error_covariance}} is the covariance of the posterior distribution for $\vect{g}(\xx)$ given $\vect{m}_{\scrS}(\xx)$.
A low-dimensional representation of the state and its covariance are usually found from data via principal component analysis (PCA) \cite{Hotelling1933analysis} or proper orthogonal decomposition (POD) \cite{berkooz1993proper} when an analytical model is not available.

A common technique, referred to as the Bayesian approach in the optimal design of experiments \cite{Pukelsheim2006optimal} is to quantify performance using functions of $\mat{C}_{\vect{e}}(\scrS)$ \cite{Chaloner1995bayesian}.
In particular, Bayesian determinantal or ``D''-optimality entails minimizing $\log{\det{\mat{C}_{\vect{e}}(\scrS)}}$, which, under Gaussian assumptions, is equivalent to minimizing the conditional entropy \cite{Shewry1987maximum}, \cite{Sebastiani2000maximum} or the volumes of confidenece ellipsoids about the maximum a posteriori (MAP) estimate of $\vect{g}(\xx)$ given $\vect{m}_{\scrS}(\xx)$ \cite{Joshi2008sensor}.
This approach is widely used for sensor placement since it readily admits efficient approximations based on convex relaxation \cite{Joshi2008sensor} and greedy algorithms \cite{Shamaiah2010greedy}, \cite{tzoumas2016sensor} with guaranteed performance.
Similar objectives have been used to quantify observability and controllability for sensor and actuator placement in linear dynamical systems \cite{Summers2015submodularity}, \cite{Summers2016submodularity}.

When there is no prior probability distribution for $\xx$ and we want to estimate the full state $\vect{g}(\xx) = \xx$ from measurements corrupted by Gaussian noise, we can construct the maximum likelihood estimate whose error covariance is
\begin{equation}
    \mat{C}_{\vect{e}}(\scrS) = \left( \mat{M}_{\scrS}^T\mat{C}_{\vect{n}_{\scrS}}^{-1}\mat{M}_{\scrS} \right)^{-1}.
\end{equation}
Minimizing the volumes of confidence ellipsoids in this setting as is done in \cite{Joshi2008sensor} is referred to as maximum likelihood ``D''-optimality since it entails maximizing $\log\det{\left(\mat{M}_{\scrS}^T\mat{C}_{\vect{n}_{\scrS}}^{-1}\mat{M}_{\scrS}\right)}$.
In the absence of the regularizing effect the prior distribution has on the estimate, we must have at least as many sensor measurements as state variables in the maximum likelihood setting.

\subsection{Pivoted QR Factorization}
Pivoted matrix factorization techniques, and QR pivoting in particular, have become a popular choice for sensor placement \cite{Manohar2018data}, \cite{Brunton2019data} and feature selection in reduced-order modeling \cite{Chaturantabut2010nonlinear}, \cite{Drmac2016new}, where the method is often referred to as the Discrete Empirical Interpolation Method (DEIM).
This approach dates back to P. Businger and G. H. Golub's seminal work \cite{Businger1965linear}, which introduced Householder-pivoted QR factorization for the purpose of feature selection in least squares fitting problems.
The approach is also closely related to orthogonal matching pursuit \cite{Pati1993orthogonal} and simultaneous orthogonal matching pursuit \cite{Tropp2005simultaneous}, which are widely used sparse approximation algorithms.

In its simplest form, one supposes that the underlying state to be estimated $\vect{g}(\xx) = \xx$ is low dimensional (e.g., using its PCA or POD coordinate representation) and selects the linear measurements from among the rows of a matrix $\mat{M}$ by forming a pivoted QR decomposition of the form
\begin{equation}
    \mat{M}^T\begin{bmatrix}[c|c] \mat{P}_1 & \mat{P}_2\end{bmatrix} = \mat{Q} \begin{bmatrix}[c|c] \mat{R}_1 & \mat{R}_2\end{bmatrix},
\end{equation}
where $\begin{bmatrix}[c|c] \mat{P}_1 & \mat{P}_2\end{bmatrix}$ is a permutation.
The first $K = \dim\xx$ pivot columns forming $\mat{P}_1$ determine a set of sensor measurements $\vect{m}_{\scrS}(\xx) = \mat{M}_{\scrS}\xx = \mat{P}_1^T\mat{M}\xx$ from which $\xx$ can be robustly recovered as
\begin{equation}
    \xx = \left( \mat{P}_1^T\mat{M} \right)^{-1} \vect{m}_{\scrS}(\xx) = \mat{Q}\left(\mat{R}_1^T\right)^{-1}\vect{m}_{\scrS}(\xx).
\end{equation}
This approach is successful because at each step of the QR pivoting process, the measurement that maximizes the corresponding diagonal entry of the upper triangular matrix $\mat{R}_1$ is selected.
The resulting large diagonal entries of $\mat{R}_1$ mean that measurement errors are not strongly amplified by the linear reconstruction map $\mat{Q}\left(\mat{R}_1^T\right)^{-1}$.

\section{Problems with Linear Techniques}
\label{sec:problems_with_linear}

In this section, we illustrate the problems with employing linear state reconstruction and sensor placement techniques for nonlinear systems and data sets by means of an example.
We consider the shock-mixing layer interaction proposed by Yee et al. \cite{Yee1999low}, which has become a canonical problem for studying jet noise production as well as high-order numerical methods.
This problem captures many key elements of shock wave-turbulent boundary layer interactions that, according to S. Priebe and M. P. Mart\'{i}n \cite{PriebeMartin2012low} ``occur in many external and internal compressible flow applications such as transonic  aerofoils, high-speed engine inlets, internal flowpaths of scramjets, over-expanded rocket engine nozzles and deflected control surfaces or any other discontinuities in the surface geometry of high-speed vehicles.''
The resulting pressure and heat transfer fluctuations can be large, so it is important to monitor the state of these flows to ensure the safety of a vehicle.

Our goal will be to choose a small number of sensor locations in this flow at which to measure either the horizontal, $u$, or vertical, $v$, velocity component in order to reconstruct the entire velocity field.
A snapshot of these velocity fields from the fully-developed flow computed using the high-fidelity local WENO-type characteristic filtering method of S.-C. Lo et al. \cite{Lo2010high} is shown in Fig.~\ref{fig:shock_mixing_layer_snapshots}.
While the flow is very nearly periodic, and hence lives near a one-dimensional loop in state space, the complicated physics arising from the interaction of the oblique shock with vortices in the spatially-evolving mixing layer results in data that do not lie near any low-dimensional subspace.
In addition to being high dimensional, this flow exhibits the low-frequency unsteadiness characteristic of shock wave--turbulent boundary layer interactions \cite{PriebeMartin2012low}, \cite{Clemens2014low}, \cite{PriebeTu2016Low} and of spatial mixing layer flows in general \cite{Ho1982subharmonics}.

\begin{figure}
    \centering
    \subfloat[stream-wise $u$ velocity component]{
    \begin{tikzonimage}[trim=26.75 120 13 135, clip=true, width=0.9\textwidth]{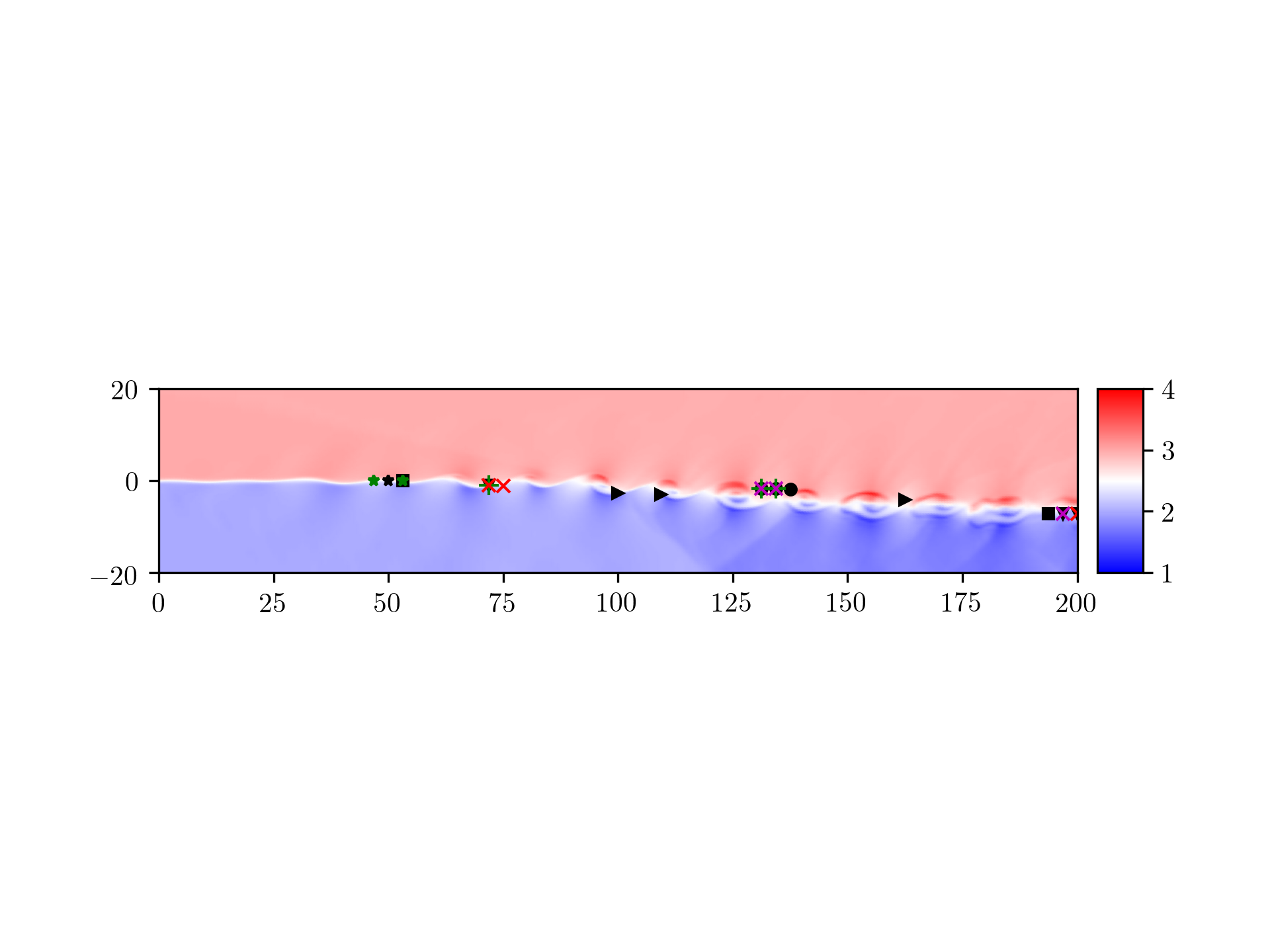}
    \end{tikzonimage}
    \label{subfig:u_snapshot}
    } \\
    \subfloat[transverse $v$ velocity component]{
    \begin{tikzonimage}[trim=30 120 10 135, clip=true, width=0.9\textwidth]{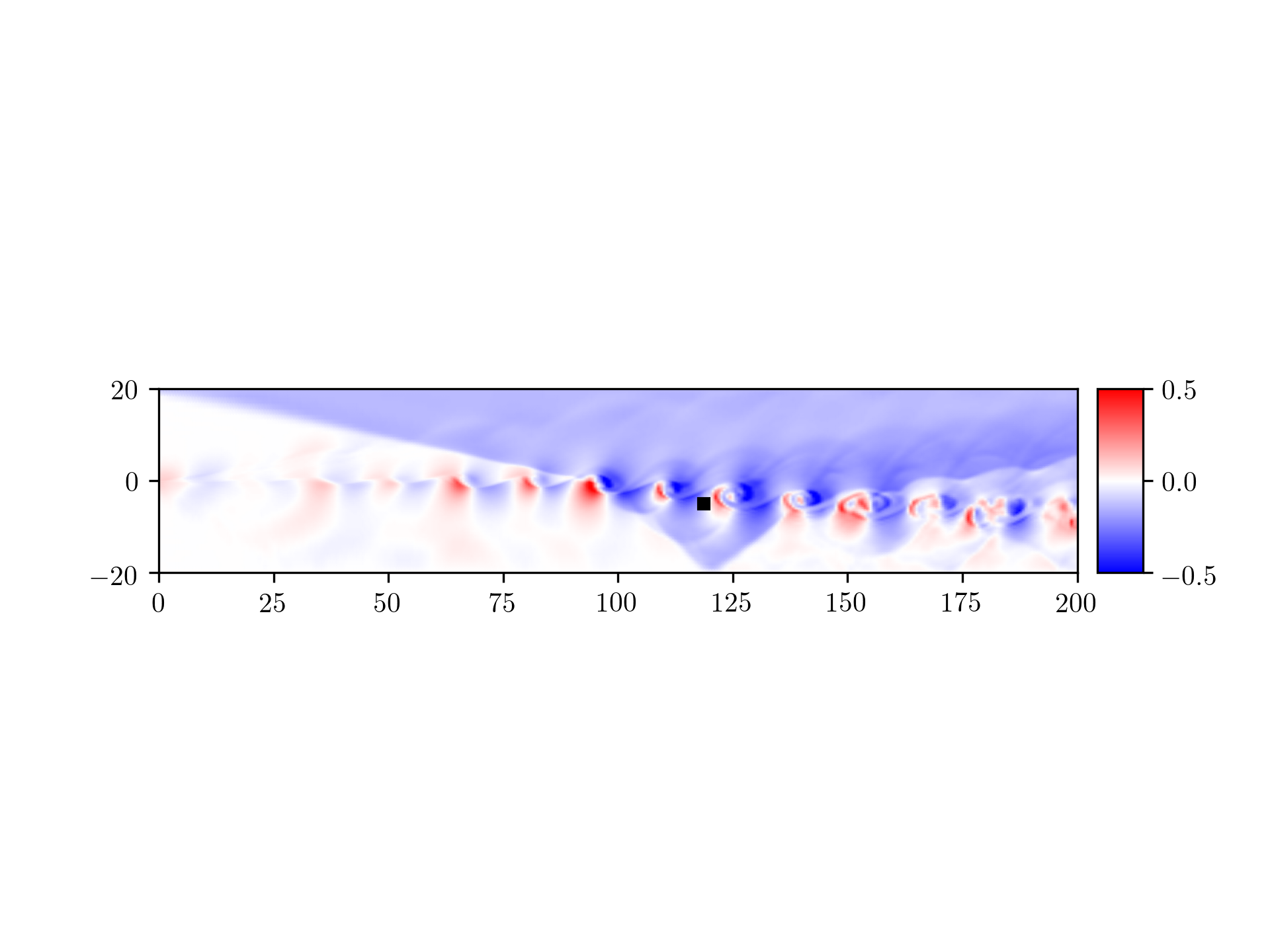}
    \end{tikzonimage}
    \label{subfig:v_snapshot}
    } \\
    \subfloat[available sensor locations]{
    \begin{tikzonimage}[trim=30 120 10 135, clip=true, width=0.9\textwidth]{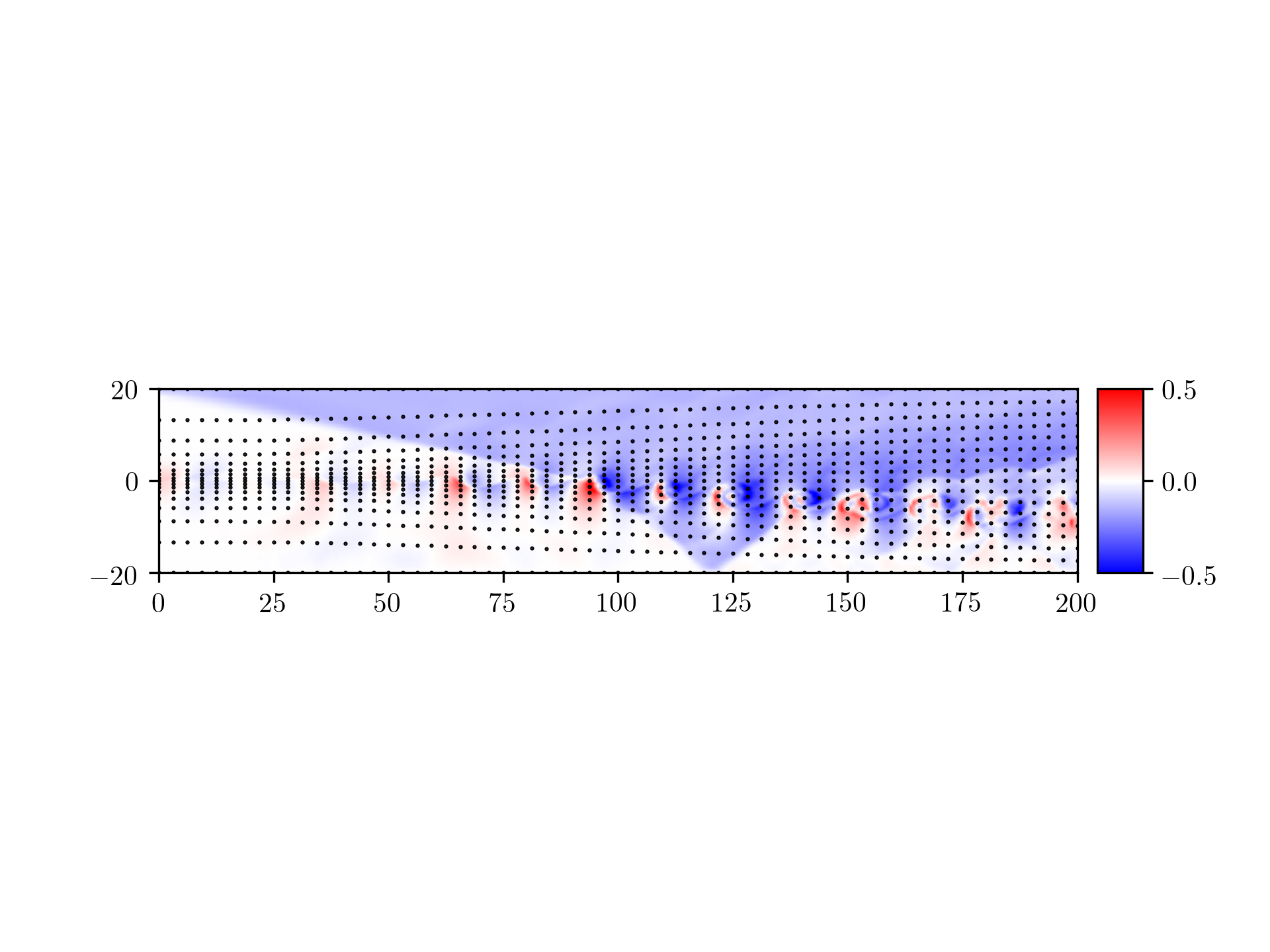}
    \end{tikzonimage}
    \label{subfig:available_sensors}
    }
    \caption{A snapshot of the $u$ and $v$ velocity components in the shock mixing-layer flow is shown in (a) and (b) along with the sensors selected using various methods 
    from among the two components at $1105$ available locations shown in (c). 
    These methods include LASSO with PCA (black o), LASSO with Isomap (red x) greedy Bayes D-optimality (magenta x), convex Bayes D-optimality (black $>$), convex D-optimality for modes $3$ and $4$ (black v), QR pivoting (green +), and secant-based techniques using detectable differences ($\# 1,\# 2$: green star, $\# 3$: black star) and the amplification threshold method (black square).}
    \label{fig:shock_mixing_layer_snapshots}
\end{figure}

\subsection{The Need for Nonlinear Reconstruction}
Linear reconstruction is fundamentally confined to a subspace whose dimension is at most equal to the total number of sensor measurements.
Hence the fraction of the variance that linear reconstruction can capture using $d$ measurements is at most the fraction of the variance along the first $d$ principal components: in particular, the coefficient of determination is bounded by
\begin{equation}
    R^2 \leq \frac{\sigma_1^2 + \cdots + \sigma_d^2}{\sigma_1^2 + \cdots + \sigma_n^2}.
    \label{eqn:linear_reconstruction_bound}
\end{equation}
Examining the fraction of the variance captured by the leading principal subspaces in Figure~\ref{subfig:shock_mixing_variance} leads us to the rather disappointing conclusion that in order to capture $90\%$ of the variance in the shock-mixing layer flow via linear reconstruction, we need at least $11$ independent measurements, and to capture $98\%$ we need at least $33$.
\begin{figure*}
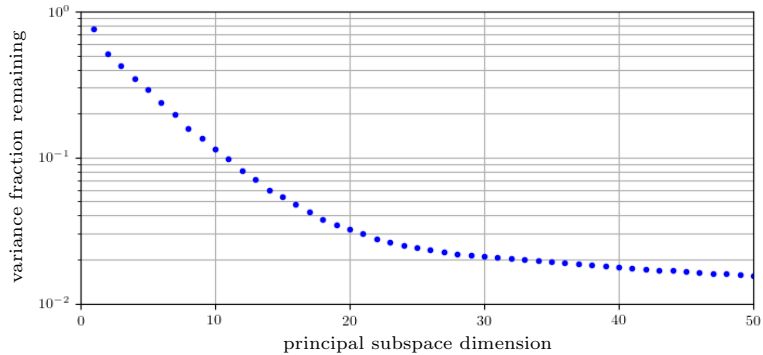
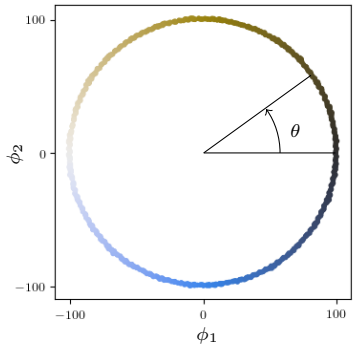
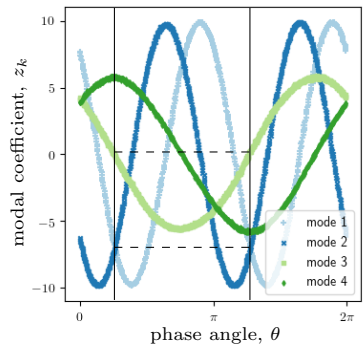

    \centering
    \subfloat[variance orthogonal to principal subspaces]{
    \begin{tikzonimage}[trim=30 5 40 20, clip=true, width=0.8\textwidth]{Figures/ShockMixingLayer/PrincipalVariances.PNG}
    \node[rotate=90] at (0.0,0.53) {\scriptsize variance fraction remaining};
    \node[rotate=0] at (0.525,-0.02) {\scriptsize principal subspace dimension};
    \end{tikzonimage}
    \label{subfig:shock_mixing_variance}
    } \\ 
    \subfloat[Isomap coordinates]{
    \begin{tikzonimage}[trim=0 0 0 0, clip=true, width=0.4\textwidth]{Figures/ShockMixingLayer/isomaps_data.PNG}
    \draw[-] (0.515,0.4925) -- (0.86,0.4925);
    \draw[-] (0.515,0.4925) -- (0.796,0.696);
    \draw[->] (0.715,0.4925) arc[radius=0.2, start angle=0, end angle=35];
    \node[rotate=0] at (0.755,0.55) {\scriptsize $\theta$};
    \node[rotate=0] at (0.525,0.015) {\scriptsize $\phi_1$};
    \node[rotate=90] at (0.02,0.49) {\scriptsize $\phi_2$};
    \end{tikzonimage}
    \label{subfig:shock_mixing_isomaps}
    }
    \subfloat[PCA coefficients]{
    \begin{tikzonimage}[trim=0 0 0 0, clip=true, width=0.4\textwidth]{Figures/ShockMixingLayer/POD_coeffs_vs_phase.PNG}
    \node[rotate=0] at (0.52,0.02) {\scriptsize phase angle, $\theta$};
    \node[rotate=90] at (0.005,0.5) {\scriptsize modal coefficient, $z_k$};
    \draw[-] (0.255,0.112) -- (0.255,0.878);
    \draw[-] (0.61,0.112) -- (0.61,0.878);
    \draw[-,dashed] (0.255,0.5) -- (0.61,0.5);
    \draw[-,dashed] (0.255,0.25) -- (0.61,0.25);
    \end{tikzonimage}
    \label{subfig:shock_mixing_projected}
    }
    \caption{The linear and nonlinear dimension reduction techniques PCA (a.k.a POD) and Isomap are applied to the shock-mixing layer data. (a) shows the remaining fraction of the total variance orthogonal to each leading principal subspace. (b) plots the data in the leading two Isomap embedding coordinates, revealing that it lies very near a loop in state space. (c) shows how the leading principal components (modal coefficients) vary with the phase angle around the loop. The black vertical lines reveal distinct points where the leading three principal components are identical.}
    \label{fig:shock_mixing_POD}
\end{figure*}

The best possible linear reconstruction performance can be arbitrarily poor even though the underlying manifold is low-dimensional.
We illustrate this fact with the following toy model that resembles the phase dependence of principal components in the shock-mixing layer problem shown in Figures~\ref{subfig:shock_mixing_isomaps}~and~\ref{subfig:shock_mixing_projected}.
Let $\theta$ be uniformly distributed over the interval $[0, 2\pi]$ and let the components of the state vector have sinusoidal dependence on the phase given by
\begin{equation}
    x_{2k-1} = \sqrt{2}\cos(k\theta),\ \ x_{2k} = \sqrt{2}\sin(k\theta),\ \ k=1,\ldots, n/2.
    \label{eqn:toy_model_equal}
\end{equation}
Since these components are orthonormal functions of $\theta$ with respect to the uniform probability measure on $[0, 2\pi]$, the state vector has isotropic covariance $\mathbb{E} \xx \xx^T = \mat{I}_n$ and the fraction of the variance captured by the leading $d$ principal components is $d/n$.
As the dimension increases, the highest possible coefficient of determination for linear reconstruction approaches zero since $R^2 \leq d/n \to 0$ as $n\to\infty$.
Meanwhile, it's obvious that the state vector can be perfectly reconstructed as a nonlinear function of $x_1$ and $x_2$ alone.

Indeed, it is possible to reconstruct the entire shock-mixing layer flow-field as a nonlinear function of the velocity measurements at two carefully chosen locations.
In particular, the measurements made at the locations marked by the two green stars in Figure~\ref{fig:shock_mixing_layer_snapshots} are one-to-one with the phase and hence the state of the flow.
This is seen in Figure~\ref{subfig:secant_MSE_data2}, where the phase angle (color) --- hence the full state --- can be determined uniquely from the values of the measurements.
Meanwhile, the best possible linear reconstruction performance using two measurements is $R^2 < 0.5$.

In practice, many nonlinear reconstruction techniques are available including neural networks \cite{Nair2019integrating}, Gaussian process regression \cite{Rasmussen2003gaussian}, and recurrent neural networks for time-delayed measurements \cite{Krishnan2017structured}.
Using Gaussian process regression and the two sensor locations marked by green stars in Figure~\ref{fig:shock_mixing_layer_snapshots}, we obtain near-perfect, robust reconstruction of the leading $100$ principal components.
The resulting reconstruction accuracy for the flow-fields on a held-out set of $250$ snapshots is $R^2=0.986$.

\subsection{The Need for Nonlinear Sensor Placement}
\label{subsec:need_for_nonlinear_placement}
With such poor reconstruction afforded by linear techniques, we cannot expect sensor placement methods based on them to perform any better.
This is not to say that a practitioner won't ever find lucky sensor locations for nonlinear reconstruction by employing a sensor placement technique that maximizes linear reconstruction accuracy.
However, this kind of luck is not guaranteed as illustrated when we apply state of the art linear sensor placement techniques to the shock mixing-layer problem.
Indeed Figures~\ref{subfig:LASSO_data}, \ref{subfig:LASSO_Iso_data}, \ref{subfig:greedy_Bayes_D_opt_data}, \ref{subfig:convex_Bayes_D_opt_data}, and \ref{subfig:pivoted_QR_data} provide visual proof that three sensors chosen using LASSO to reconstruct the leading $100$ principal components, LASSO to reconstruct the leading two Isomap coordinates, the greedy Bayes D-optimality approach, the convex Bayes D-optimality approach, and pivoted QR factorization do not produce measurements that are one-to-one with the state.
Implementation details can be found in Appendix~\ref{app:implementation_details}.
In each case, there are at least two distinct states with different phases on the orbit (color) for which the sensors measure the same values and hence cannot be used to tell them apart.

Even measuring the leading three principal components directly, which are optimal for linear reconstruction, cannot always reveal the state of the shock-mixing layer flow.
The black vertical lines in Figure~\ref{subfig:shock_mixing_projected} indicate the phases of two distinct states for which the leading three principal components agree, yet the fourth differs.
One may wonder whether the fact that the third and fourth principal components are one-to-one with the state can be leveraged for sensor placement.
Even our attempt to place three maximum likelihood D-optimal sensors using the convex optimization approach of \cite{Joshi2008sensor} to reconstruct the third and fourth principal components fails to produce measurement that can recover the phase of the flow as seen in Figure~\ref{subfig:convex_D_opt_34_data}.

On the other hand, it is possible to find two sensor locations whose measurements are one-to-one with the state as shown in Figure~\ref{subfig:secant_MSE_data2}.
The resulting curve near which the data lie has a kink in the lower-right region indicating that while the measurements are one-to-one, the time-derivative of the state cannot be determined at this point.
Capturing time-derivatives is necessary for building reduced-order models, and this can be accomplished using the three sensors marked by black squares in Figure~\ref{fig:shock_mixing_layer_snapshots} and whose measurements are plotted in Figure~\ref{subfig:secant_Lip_data}.
We note, however, that these locations are far apart in space, and so will be more sensitive to perturbations of the shear-layer thickness which affects the horizontal spacing of vortices.

\begin{figure*}
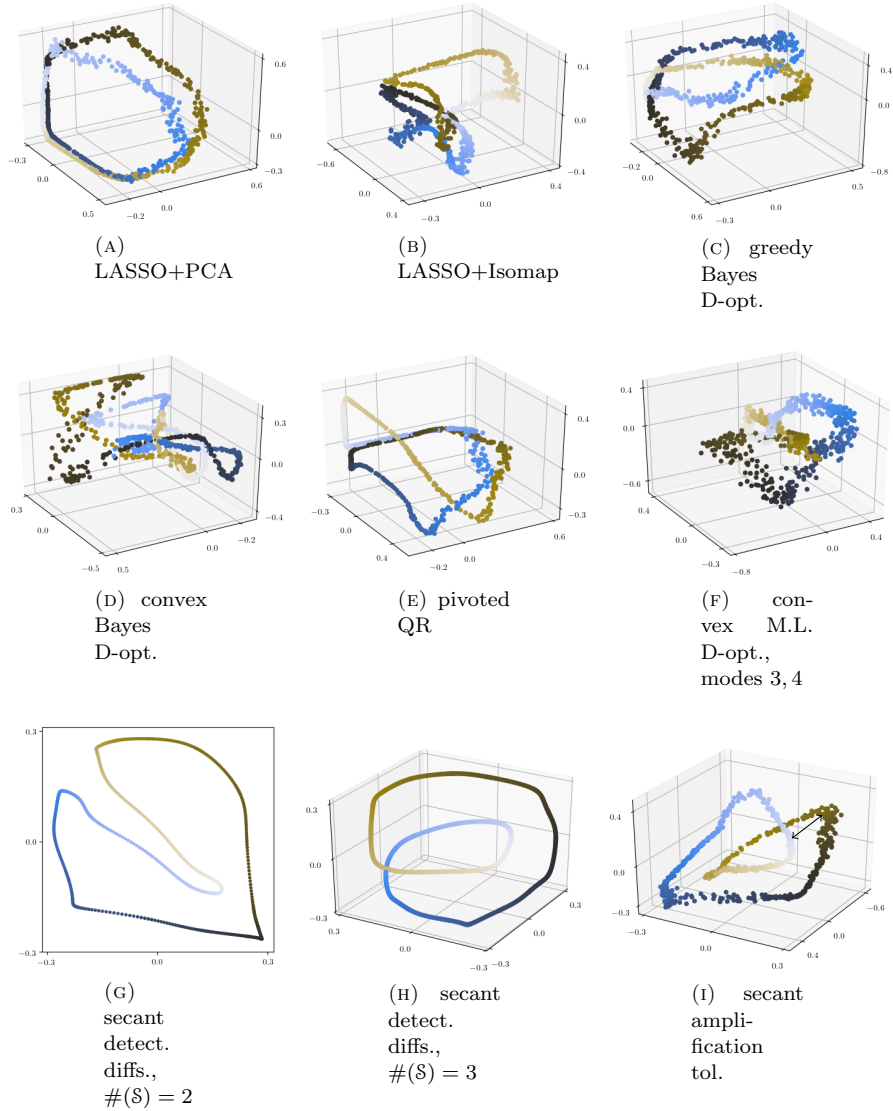

    \centering
    \subfloat[LASSO+PCA]{
    \begin{tikzonimage}[trim=90 30 40 50, clip=true, width=0.3\textwidth]{Figures/ShockMixingLayer/group_LASSO_data.PNG}
    \end{tikzonimage}
    \label{subfig:LASSO_data}
    }
    \subfloat[LASSO+Isomap]{
    \begin{tikzonimage}[trim=90 30 40 50, clip=true, width=0.3\textwidth]{Figures/ShockMixingLayer/group_LASSO_Iso_data.PNG}
    \end{tikzonimage}
    \label{subfig:LASSO_Iso_data}
    }
    \subfloat[greedy Bayes D-opt.]{
    \begin{tikzonimage}[trim=90 30 40 50, clip=true, width=0.3\textwidth]{Figures/ShockMixingLayer/BayesD_greedy_data.PNG}
    \end{tikzonimage}
    \label{subfig:greedy_Bayes_D_opt_data}
    } \\
    \subfloat[convex Bayes D-opt.]{
    \begin{tikzonimage}[trim=90 30 40 50, clip=true, width=0.3\textwidth]{Figures/ShockMixingLayer/BayesD_cvx_data.PNG}
    \end{tikzonimage}
    \label{subfig:convex_Bayes_D_opt_data}
    }
    \subfloat[pivoted QR]{
    \begin{tikzonimage}[trim=90 30 40 50, clip=true, width=0.3\textwidth]{Figures/ShockMixingLayer/pivoted_QR_data.PNG}
    \end{tikzonimage}
    \label{subfig:pivoted_QR_data}
    }
    \subfloat[convex M.L. D-opt., modes $3,4$]{
    \begin{tikzonimage}[trim=70 30 65 50, clip=true, width=0.3\textwidth]{Figures/ShockMixingLayer/D_cvx_modes34_data.PNG}
    \end{tikzonimage}
    \label{subfig:convex_D_opt_34_data}
    } \\
    \subfloat[secant detect. diffs., $\#(\scrS)=2$]{
    \begin{tikzonimage}[trim=70 15 75 35, clip=true, width=0.28\textwidth]{Figures/ShockMixingLayer/MDD_greedy2_data.PNG}
    \end{tikzonimage}
    \label{subfig:secant_MSE_data2}
    }
    \subfloat[secant detect. diffs., $\#(\scrS)=3$]{
    \begin{tikzonimage}[trim=65 30 65 50, clip=true, width=0.3\textwidth]{Figures/ShockMixingLayer/MDD_greedy3_data.PNG}
    \end{tikzonimage}
    \label{subfig:secant_MSE_data3}
    }
    \subfloat[secant amplification tol.]{
    \begin{tikzonimage}[trim=65 30 65 50, clip=true, width=0.3\textwidth]{Figures/ShockMixingLayer/secant_Lip_data.PNG}
        \draw [<->] (0.655, 0.58) -- (0.76, 0.68);
    \end{tikzonimage}
    \label{subfig:secant_Lip_data}
    }
    \caption{these plots show the measurements made by sensors selected using various methods on the shock-mixing layer flow problem. Each dot indicates the values measured by the sensors and its color indicates the phase of the corresponding flowfield. The sensors selected using linear methods shown in (a)-(f) each make identical or nearly identical measurements on distinct flowfields, indicated by overlapping points with different colors. These sensors cannot tell those flowfields apart since the measurements are the same. The sensors selected using secant-based methods shown in (g)-(i) make distinct measurements for distinct states and have no such overlaps.}
    \label{fig:linear_methods_for_shock_mixing}
\end{figure*}

The linear techniques, LASSO, greedy and convex Bayesian D-optimal selection, pivoted QR, and even direct measurement of principal components fail to reveal the minimum number of sensors needed to reconstruct the state because there is important information about the flow contained in less-energetic principal components.
In particular, Figure~\ref{subfig:shock_mixing_projected} shows that the most energetic two principal components oscillate with twice the frequency of the third and fourth most energetic components as one moves around the orbit.
In trying to maximize the variance captured by a linear estimator, the linear sensor placement techniques are doomed to choose sensors whose measurements return to the same values twice in one period as in Figures~\ref{subfig:LASSO_data},~\ref{subfig:greedy_Bayes_D_opt_data},~and~\ref{subfig:pivoted_QR_data}.
In addition, the convex Bayesian D-optimal approach finds sensors that achieve a superior value of the objective $\log\det{\mat{C}_{\vect{e}}(\scrS)}$ than the greedy Bayesian D-optimal approach, yet the resulting measurements in Figure~\ref{subfig:convex_Bayes_D_opt_data} have many more self-intersections than the greedy method in Figure~\ref{subfig:greedy_Bayes_D_opt_data}.

We are forced to conclude that sensor placement based on linear reconstruction is totally unconnected with nonlinear reconstructability when the underlying manifold and principal dimensions do not agree.
This can be seen most clearly from the fact that by simply re-scaling each coordinate in the toy model Eq.~\textbf{\ref{eqn:toy_model_equal}} by positive constants $\alpha_1, \ldots, \alpha_n$, we can trick these techniques into selecting any given collection of coordinates.
Under this scaling, the covariance matrix becomes $\text{diag}(\alpha_1^2, \ldots, \alpha_n^2)$ and if we sort the constants in decreasing order $\alpha_{k_1} \geq \alpha_{k_2} \geq \cdots$ then the variance captured by a linear reconstruction from $d$ measurements cannot exceed
\begin{equation}
    R^2 \leq \frac{\alpha_{k_1}^2 + \cdots + \alpha_{k_d}^2}{\alpha_1^2 + \cdots + \alpha_{n}^2},
\end{equation}
according to the bound in Eq.~\textbf{\ref{eqn:linear_reconstruction_bound}}.
Equality is achieved by the optimal linear estimator based on measured coordinates $x_{k_1}, \ldots, x_{k_d}$.
Meanwhile, the only pair of coordinates needed for nonlinear reconstruction are $x_1$ and $x_2$.

The key point is that sensor placement approaches based on linear reconstruction tend to pick sensor locations that have high variance over other choices that can be more informative.
The linear approach works well when a small number of principal components contain essentially all of the variance or when all higher modal components are very nearly determined by the lower ones.
But as we have shown, linear approaches to sensor placement can fail catastrophically when genuinely informative fluctuations, e.g. sub-harmonics, produce significant variance orthogonal to the leading principal subspace.
In order to reveal minimal sensor locations that can be used for nonlinear reconstruction in such situations, we cannot rely on linear reconstruction performance as an optimization criteria, and an entirely new approach is needed.
In Section~\textbf{\ref{sec:secants}} we discuss an approach that can recover the correct coordinates from which all others can be nonlinearly reconstructed.

\subsection{Selecting Manifold Learning Coordinates}
\label{subsec:manifold_learning}
The examples presented in the previous Section~\ref{subsec:need_for_nonlinear_placement} involved data lying near a one-dimensional underlying manifold.
Essentially the same problems can occur for data lying near higher-dimensional manifolds, and an especially illustrative and practically useful application where this situation is routinely encountered is manifold learning.
In general, manifold learning seeks to find a small collection of nonlinear coordinates that fully describe the structure of a dataset, i.e., that embed it in a lower-dimensional space.
Many techniques including kernel PCA \cite{Scholkopf1998}, Laplacian eigenmaps \cite{Belkin2003}, diffusion maps \cite{Coifman2006}, and Isomap \cite{Tenenbaum2000global} accomplish this via eigen-decomposition of various symmetric matrices 
\begin{equation}
    \mathbf{G} = \boldsymbol{\Phi}\boldsymbol{\Lambda}^2\boldsymbol{\Phi}^T, \qquad 
    \boldsymbol{\Phi} = \begin{bmatrix} \boldsymbol{\phi}_1 & \cdots & \boldsymbol{\phi}_r \end{bmatrix}
\end{equation}
derived from pair-wise similarity among data points.
The $k$th eigen-coordinate of each point in the data set is given by the elements of $\boldsymbol{\phi}_k$, which can be viewed as a discrete approximation of an eigenfunction of some kernel integral operator on the underlying manifold.
These methods suffer from a well-known issue when the dataset has multiple length scales: namely, there may be several redundant harmonically related eigen-coordinates with higher salience (determined by the eigenvalues) before one encounters a new fundamental eigen-coordinate describing a new set of features.
This makes the search for a fundamental set of eigen-coordinates that embed the underlying manifold a potentially large combinatorial search problem.

As a concrete example, consider the Isomap eigen-coordinates shown in Figure~\ref{fig:torus} computed from 2000 points lying on the torus in $\mathbb{R}^3$,
\begin{equation}
    \mathbf{x} = 
    \left(
        \left(5 + \cos{\theta_2}\right)\cos{\theta_1}, \
        \left(5 + \cos{\theta_2}\right)\sin{\theta_1}, \
        \sin{\theta_2}
    \right),
    \label{eqn:torus}
\end{equation}
with $(\theta_1, \theta_2)$ drawn uniformly at random from the square $[0,2\pi]\times[0,2\pi]$.
Toroidal dynamics are known to occur in combustion instabilities where multiple incommensurate frequencies are observed \cite{Dunstan2001fitting}, \cite{Lamraoui2011experimental}, producing data that winds around a torus in high-dimensional state space.
One may want to build simplified reduced-order models of these dynamics by finding a small set of nonlinear coordinates that described the state on the torus using manifold learning.

Considering the torus in Eq.~\textbf{\ref{eqn:torus}}, the underlying kernel integral operators associated with each manifold learning technique mentioned above are equivariant with respect to rotations about $\theta_1$, meaning that among their eigenfunctions are always those of the symmetry's generator, namely $\phi_k(\xx) = e^{i k \theta_1(\xx)}$.
Unsurprisingly, the leading six Isomap eigen-coordinates, ranked by their associated eigenvalues, are all harmonically related modes resembling the real and imaginary parts of $e^{i k \theta_1}$, which provide redundant information about $\theta_1$ and no information about $\theta_2$.
The coordinate $\theta_1$ corresponds to larger spatial variations among points and it is not until we encounter the seventh eigen-coordinate that we learn about the smaller variations associated with $\theta_2$.
A na\"{\i}ve user of Isomap might plot the data in the leading three coordinates and falsely conclude that the data lies on a two-dimensional gasket.
We'd like to provide an efficient method for selecting the fundamental eigen-coordinates $\phi_1$, $\phi_2$ and $\phi_7$, from which all others can be (nonlinearly) reconstructed; yet again, linear methods fundamentally cannot be used to select them.
\begin{figure*}
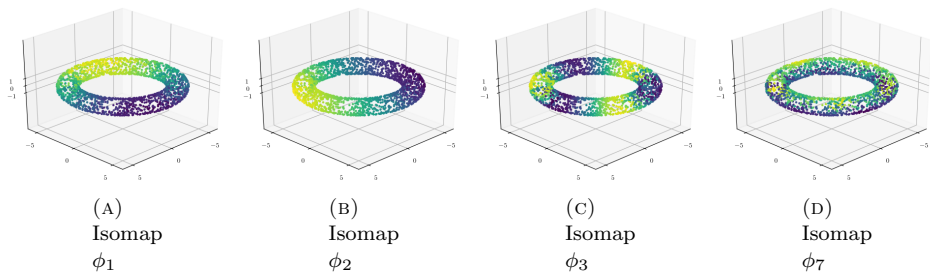

    \centering
    \subfloat[Isomap $\phi_1$]{
    \begin{tikzonimage}[trim=70 25 70 50, clip=true, width=0.23\textwidth]{Figures/torus/isomaps_coordinate_0.PNG}
    \end{tikzonimage}
    \label{subfig:torus_mode1}
    }
    \subfloat[Isomap $\phi_2$]{
    \begin{tikzonimage}[trim=70 25 70 50, clip=true, width=0.23\textwidth]{Figures/torus/isomaps_coordinate_1.PNG}
    \end{tikzonimage}
    \label{subfig:torus_mode2}
    }
    \subfloat[Isomap $\phi_3$]{
    \begin{tikzonimage}[trim=70 25 70 50, clip=true, width=0.23\textwidth]{Figures/torus/isomaps_coordinate_2.PNG}
    \end{tikzonimage}
    \label{subfig:torus_mode3}
    }
    \subfloat[Isomap $\phi_7$]{
    \begin{tikzonimage}[trim=70 25 70 50, clip=true, width=0.23\textwidth]{Figures/torus/isomaps_coordinate_6.PNG}
    \end{tikzonimage}
    \label{subfig:torus_mode7}
    }
    \caption{Isomap coordiantes computed from $2000$ randomly sampled points on the torus defined by Eq.~\textbf{\ref{eqn:torus}}. The leading six coordinates resemble the real and imaginary components of $e^{i k \theta_1}$, $k=1,2,3$, due to the rotational symmetry, providing redundant information about $\theta_1$ and no information about $\theta_2$. The fundamental coordinates $\phi_1$, $\phi_2$, and $\phi_7$ provide an embedding of the data that captures its toroidal structure.}
    \label{fig:torus}
\end{figure*}

Linear methods cannot be used to select manifold learning eigen-coordinates for essentially the same reason why they failed on the toy models in Section~\ref{subsec:need_for_nonlinear_placement}: the coordinates are all mutually orthogonal as functions supported on the data!
In particular, the covariance among the eigen-coordinates over the data is isotropic $\mathbb{E}[\phi_i(\mathbf{x}) \phi_j(\mathbf{x})] = \frac{1}{m}\boldsymbol{\phi}_i^T\boldsymbol{\phi}_j = \frac{1}{m}\delta_{i,j}$ and so all sub-collections of a given size capture the same fraction of the total eigen-coordinate variance.
The methods presented in the following Section~\ref{sec:secants} remedy this issue and are capable of selecting the correct set of fundamental eigen-coordinates on the torus example in Eq.~\textbf{\ref{eqn:torus}}.

\section{Greedy Algorithms using Secants}
\label{sec:secants}
With the failure of techniques based on linear reconstruction to select minimal collections of sensors for nonlinear reconstruction, we propose an alternative approach that relies on a collection of ``secant'' vectors between distinct data points.
In this section, we develop this approach, yielding three related greedy selection techniques with classical theoretical guarantees on their performance.
We also discuss some theoretical results that provide deterministic performance guarantees for the sensors selected by our algorithms on unseen data drawn from an underlying set.

We consider a very general type of sensor placement problem that can be stated as follows.
Let the set $\calX\subset \mathbb{R}^n$ represent the possible states of the system and suppose that we are interested in some relevant information about the state described by a function $\vect{g}:\calX\to\mathbb{R}^q$.
The sensors are also described as functions of the state $\vect{m}_j:\calX\to\mathbb{R}^{d_j}$, $j=1,\ldots,M$ where, with a slight abuse of notation, we will denote the set of all sensors and the set of all sensor indices by $\scrM$ interchangeably.
Our goal is to choose a small subset of sensors $\scrS=\lbrace j_1,\ldots,j_K\rbrace\subseteq\scrM$ so that the relevant information $\vect{g}(\xx)$ about any state $\xx\in\calX$ can be recovered from the combined measurements we have selected
\begin{equation}
    \mS(\xx) = \left( \vect{m}_{j_1}(\xx),\ldots, \vect{m}_{j_K}(\xx) \right)\in\mathbb{R}^{d_{\scrS}},
\end{equation}
where the measurement dimension is $d_{\scrS} = \sum_{j\in\scrS}d_j$.
That is, we want to choose $\scrS$ in such a way that there exists a reconstruction function $\PhiS:\mathbb{R}^{d_{\scrS}}\to\mathbb{R}^q$ so that
\begin{equation}
    \vect{g}(\xx) = \PhiS\left(\mS(\xx)\right)
\end{equation}
for every $\xx\in\calX$.

For such a reconstruction function $\PhiS$ to exist, we must meet the modest condition that any two states $\xx, \xx'\in\calX$ with different target values $\vect{g}(\xx) \neq \vect{g}(\xx')$ produce different measured values $\mS(\xx)\neq \mS(\xx')$.
This is nothing but the vertical line test for $\PhiS$, ensuring that it is a true function that does not take multiple values.
However, this condition may be met for a variety of different choices of measurements $\scrS$ and we shall introduce three different ways to quantify their performance and choose among them.
In these methods, the notion that $\PhiS$ should not be sensitive to perturbations of the measurements is key in quantifying the performance of the sensors.
The techniques we propose each rely on secants, defined below, to measure the sensitivity of $\PhiS$.
\begin{definition}[Secant]
A \emph{secant} is a pair of states $(\vect{x}, \vect{x}')$, where $\vect{x}, \vect{x}'\in\calX$ and $\vect{x}\neq\vect{x}'$.
\end{definition}

By carefully choosing the objective functions $f:2^{\scrM}\to\mathbb{R}$, we can rely on classical results by G. L. Nemhauser and L. A. Wolsey et al. \cite{Nemhauser1978analysis}, \cite{Wolsey1982analysis} to prove that greedy algorithms can be used to place the sensors with near-optimal performance.
In particular, each objective that we propose is normalized so that $f(\emptyset)=0$, monotone non-decreasing so that $\scrS\subseteq\scrS'$ implies $f(\scrS)\leq f(\scrS')$, and has a diminishing returns property called \emph{submodularity}.
\begin{definition}[Submodular Function]
\label{def:submodular_function}
Let $\scrM$ be a finite set and denote the set of all subsets of $\scrM$ by $2^\scrM$.
A real-valued function of the subsets $f:2^\scrM\to\mathbb{R}$ is called ``submodular'' when it has the following diminishing returns property:
for any element $j\in\scrM$ and subsets $\scrS,\scrS'\subseteq\scrM$,
\begin{equation}
\scrS\subseteq\scrS'\subseteq{\scrM}\setminus\lbrace j \rbrace \quad \Rightarrow \quad
f(\scrS\cup\lbrace j \rbrace) - f(\scrS) \geq f(\scrS'\cup\lbrace j \rbrace) - f(\scrS').
\end{equation}
That is, adding any new element $j$ to the smaller set $\scrS$ increases $f$ at least as much as adding the same element to the larger set $\scrS'\supseteq\scrS$.
\end{definition}

Note that in applications we often do not have direct access to the full set $\calX$, which may be continuous. 
Rather, we have a discrete collection of data $\calX_N = \lbrace \xx_1,\ldots, \xx_N \rbrace\subset\calX$, which we assume is large enough to achieve suitable approximations of the underlying set.

\subsection{Maximizing Detectable Differences}
\label{subsec:MSF}
As we have seen in the first half of this paper, a set of sensors can be considered good if nearby measurements come only from states whose target variables are also close together.
Otherwise a small perturbation to the measurements results in a large change in the quantities of interest.
One way to quantify this intuition is to select measurements that minimize the sum of squared differences in the target variables
associated with states whose measurements are closer together than a fixed detection threshold $\gamma > 0$, i.e.,
\begin{equation}
    F_{\gamma}(\scrS) := \sum_{\substack{\xx, \xx' \in\calX_N\ : \\ \Vert \mS(\xx) - \mS(\xx') \Vert_2 < \gamma}} \left\Vert \vect{g}(\vect{x}) - \vect{g}(\vect{x}') \right\Vert_2^2.
    \label{eqn:total_undetectable_differences}
\end{equation}
The length scale $\gamma$ determines how large of a difference between measurements the user deems to be significant enough to tell the two states $\xx$ and $\xx'$ apart.
For instance, $\gamma^2$ might be selected to be proportional to the noise variance using a desired number $\#(\scrS) = K$ of sensors.
Let the sum of squared differences in the target variables along each secant be denoted by
\begin{equation}
    F_{\infty} := \sum_{\xx, \xx' \in\calX_N} \left\Vert \vect{g}(\vect{x}) - \vect{g}(\vect{x}') \right\Vert_2^2.
    \label{eqn:total_fluctuation}
\end{equation}
Then it is clear that minimizing the sum of squared ``undetectable'' differences given by Eq.~\textbf{\ref{eqn:total_undetectable_differences}} is equivalent to maximizing an objective function
\begin{equation}
    \tilde{f}_{\gamma}(\scrS) = F_{\infty} - F_{\gamma}(\scrS)  = \sum_{\xx, \xx' \in\calX_N} \tilde{w}_{\gamma, \xx, \xx'}(\scrS) \Vert \vect{g}(\xx) - \vect{g}(\xx')\Vert_2^2,
    \label{eqn:rigid_MSF_objective}
\end{equation}
where $\tilde{w}_{\gamma, \xx, \xx'}(\scrS)$ is one if $\Vert \mS(\xx) - \mS(\xx') \Vert_2 \geq \gamma$ and is zero otherwise.
This weight function indicates whether our measurements $\mS$ can distinguish the states $\vect{x}$ and $\vect{x}'$ using the detection threshold $\gamma$, and may be written
\begin{equation}
    \tilde{w}_{\gamma, \xx, \xx'}(\scrS) = \bbone\left\lbrace \Vert \mS(\xx) - \mS(\xx') \Vert_2 \geq \gamma \right\rbrace,
\end{equation}
where $\bbone\{ A\} = 1$ if $A$ is true and $0$ if $A$ is false.
Therefore, we can view the objective in Eq.~\textbf{\ref{eqn:rigid_MSF_objective}} as the sum of squared differences that are ``detectable.''

Maximizing the objective in Eq.~\textbf{\ref{eqn:rigid_MSF_objective}} over a fixed number of sensors $\#(\scrS) \leq K$ is a combinatorial optimization problem and to our knowledge does not admit an efficient direct approximation algorithm.
However, if we reformulate the objective using a relaxed weight function
\begin{equation}
    w_{\gamma, \xx, \xx'}(\scrS) = \min\left\lbrace\frac{1}{\gamma^2}\Vert \mS(\xx) - \mS(\xx') \Vert_2^2,\ 1 \right\rbrace,
    \label{eqn:relaxed_weight_function}
\end{equation}
then
\begin{equation}
\boxed{
    f_{\gamma}(\scrS) = \sum_{\xx, \xx' \in\calX_N} w_{\gamma, \xx, \xx'}(\scrS) \left\Vert \vect{g}(\vect{x}) - \vect{g}(\vect{x}') \right\Vert_2^2,
    }
    \label{eqn:relaxed_MSF_objective}
\end{equation}
obtained by replacing $\tilde{w}$ with $w$ in Eq.~\textbf{\ref{eqn:rigid_MSF_objective}}, becomes a normalized, monotone, submodular function on subsets $\scrS\subseteq\scrM$ (Lemma~\ref{lem:submodularity_of_detectable_differences} in the Appendix) and a simple greedy approximation algorithm guarantees near-optimal performance on this problem!
The greedy algorithm produces a sequence of sets $\scrS_1, \scrS_2, \ldots$, by starting with $\scrS_0 = \emptyset$ and adding the sensor $j_k$ to $\scrS_{k-1}$ that maximizes the objective $f_{\gamma}(\scrS_{k-1}\cup\lbrace j\rbrace)$ over all $j\in\scrM\setminus\scrS_{k-1}$.
If $\scrS^*_K$ maximizes $f_{\gamma}(\scrS)$ over all subsets of size $\#(\scrS) = K$ then the classical result of G. L. Nemhauser et al. \cite{Nemhauser1978analysis} states that the objective values attained by the greedily chosen sets satisfy
\begin{equation}
    f_{\gamma}(\scrS_k) \geq \left(1 - e^{-k/K} \right)f_{\gamma}(\scrS^*_K), \qquad k=1,\ldots, \#(\scrM).
    \label{eqn:maximization_guarantee}
\end{equation}

The objective function $f_{\gamma}$ given by Eq.~\textbf{\ref{eqn:relaxed_MSF_objective}} can be viewed as a ``submodular relaxation'' of the original sum of squared differences $\tilde{f}_{\gamma}$ given by Eq.~\textbf{\ref{eqn:rigid_MSF_objective}}.
While $f_{\gamma}(\scrS) \geq \tilde{f}_{\gamma}(\scrS)$ for every $\scrS\subseteq\scrM$, Theorem~\ref{thm:submodular_relaxation}, below, shows that $f_{\gamma}$ also provides a lower bound on $\tilde{f}_{\gamma'}$ at reduced values of the detection threshold $\gamma' < \gamma$.
Hence, maximization of $f_{\gamma}$ is justified as a proxy for maximizing $\tilde{f}_{\gamma'}$.
Moreover, the relaxed objective bounds the total square differences among target variables that are \emph{not detectable} due to corresponding measurement differences smaller than reduced threshold via Eq.~\textbf{\ref{eqn:relaxed_upper_bound_on_error}} of Theorem~\ref{thm:submodular_relaxation}.
\begin{theorem}[Relaxation Bound on Undetectable Differences]
\label{thm:submodular_relaxation}
Consider the rigid and relaxed objectives given by Eq.~\textbf{\ref{eqn:rigid_MSF_objective}} and Eq.~\textbf{\ref{eqn:relaxed_MSF_objective}}.
Then for every $\scrS\subseteq\scrM$ and constant $0 < \alpha < 1$, we have
\begin{equation}
    \tilde{f}_{\alpha\gamma}(\scrS) \geq \frac{1}{1-\alpha^2}\left[ f_{\gamma}(\scrS) - \alpha^2 F_{\infty} \right].
\end{equation}
Furthermore, the total fluctuation between target variables associated with states whose measurements are closer together than the reduced detection threshold $\alpha\gamma$, given by Eq.~\textbf{\ref{eqn:total_undetectable_differences}}, is bounded above by
\begin{equation}
    F_{\alpha\gamma}(\scrS) \leq \frac{1}{1-\alpha^2}\left[ F_{\infty} - f_{\gamma}(\scrS) \right].
    \label{eqn:relaxed_upper_bound_on_error}
\end{equation}
\end{theorem}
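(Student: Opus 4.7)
The plan is to reduce both inequalities to a single \emph{pointwise} comparison between the two weight functions on each secant $(\xx,\xx')$ and then integrate against the non-negative factor $\|\vect{g}(\xx)-\vect{g}(\xx')\|_2^2$. Concretely, abbreviate $d := \|\mS(\xx)-\mS(\xx')\|_2$ for a fixed pair, and note that by construction $\tilde{w}_{\alpha\gamma,\xx,\xx'}(\scrS)=\bbone\{d\geq \alpha\gamma\}$ while $w_{\gamma,\xx,\xx'}(\scrS)=\min\{d^2/\gamma^2,1\}$. The first step I would carry out is the claim
\begin{equation}
(1-\alpha^2)\,\tilde{w}_{\alpha\gamma,\xx,\xx'}(\scrS) \;\geq\; w_{\gamma,\xx,\xx'}(\scrS) - \alpha^2,
\end{equation}
valid on every secant.

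To verify this, I would split into three cases according to the size of $d$. If $d\geq \gamma$, both sides equal $1-\alpha^2$. If $\alpha\gamma\leq d<\gamma$, the left side is $1-\alpha^2$ while the right side is $d^2/\gamma^2-\alpha^2\in[0,1-\alpha^2)$. If $d<\alpha\gamma$, the left side is $0$ and the right side is $d^2/\gamma^2-\alpha^2<0$. In each case the inequality holds, which is really all the content of the theorem.

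Multiplying the pointwise inequality by $\|\vect{g}(\xx)-\vect{g}(\xx')\|_2^2\geq 0$ and summing over all pairs in $\calX_N\times\calX_N$ yields
\begin{equation}
(1-\alpha^2)\,\tilde{f}_{\alpha\gamma}(\scrS) \;\geq\; f_{\gamma}(\scrS) - \alpha^2 F_{\infty},
\end{equation}
since by definition the sums reconstruct $\tilde{f}_{\alpha\gamma}$, $f_{\gamma}$, and $F_{\infty}$ respectively. Dividing by $1-\alpha^2>0$ gives the first claimed bound. For the second bound, I would use the complementary identity $\tilde{f}_{\alpha\gamma}(\scrS)+F_{\alpha\gamma}(\scrS)=F_{\infty}$, which follows directly from the definitions of $\tilde{w}$ and $F_{\alpha\gamma}$ by partitioning the secants into those with $d\geq \alpha\gamma$ and those with $d<\alpha\gamma$. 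Substituting the first inequality into $F_{\alpha\gamma}(\scrS)=F_{\infty}-\tilde{f}_{\alpha\gamma}(\scrS)$ and simplifying yields $F_{\alpha\gamma}(\scrS)\leq (F_{\infty}-f_{\gamma}(\scrS))/(1-\alpha^2)$.

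The only nontrivial step is recognizing that the correct comparison to make is the linear-in-$\tilde{w}$ bound with the offset $-\alpha^2$ on the right, i.e., discovering the right ``linear envelope'' between the indicator and the clipped quadratic; once one sees this, the rest is a three-case check and bookkeeping. I don't anticipate any real obstacle beyond isolating that envelope inequality, and no structural properties of $\vect{g}$, $\mS$, or $\calX$ are used beyond the pointwise non-negativity of $\|\vect{g}(\xx)-\vect{g}(\xx')\|_2^2$.
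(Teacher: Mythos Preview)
Your proposal is correct and follows essentially the same approach as the paper: establish the pointwise inequality $(1-\alpha^2)\tilde{w}_{\alpha\gamma,\xx,\xx'}(\scrS)\geq w_{\gamma,\xx,\xx'}(\scrS)-\alpha^2$ on each secant, then sum against the non-negative factor $\|\vect{g}(\xx)-\vect{g}(\xx')\|_2^2$ and use the identity $F_{\alpha\gamma}(\scrS)=F_\infty-\tilde f_{\alpha\gamma}(\scrS)$ for the second bound. The only cosmetic difference is that the paper first rewrites $\tilde{w}_{\alpha\gamma}=\bbone\{w_\gamma\geq\alpha^2\}$ and then appeals to the linear lower bound on an indicator over $[0,1]$, whereas you verify the same inequality by a direct three-case split on $d$; the content is identical.
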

\begin{proof}
We observe that
\begin{equation}
    \Vert \mS(\xx) - \mS(\xx') \Vert_2 \geq \alpha\gamma \quad \Leftrightarrow \quad
    w_{\gamma, \xx, \xx'}(\scrS) \geq \alpha^2
\end{equation}
and so we have
\begin{align}
    \tilde{w}_{\alpha\gamma, \xx, \xx'}(\scrS) &= \bbone\left\lbrace \Vert \mS(\xx) - \mS(\xx') \Vert_2 \geq \alpha\gamma \right\rbrace \\
    &= \bbone\left\lbrace w_{\gamma, \xx, \xx'}(\scrS) \geq \alpha^2 \right\rbrace.
\end{align}
Since $0\leq w_{\gamma, \xx, \xx'}(\scrS)\leq 1$, we obtain the following linear lower bound
\begin{equation}
    \tilde{w}_{\alpha\gamma, \xx, \xx'}(\scrS) \geq \frac{1}{1-\alpha^2}\left[ w_{\gamma, \xx, \xx'}(\scrS) - \alpha^2 \right].
\end{equation}
Summing this lower bound over all secants gives
\begin{equation}
    \tilde{f}_{\alpha\gamma}(\scrS) \geq \frac{1}{1-\alpha^2}\left[ f_{\gamma}(\scrS) - \alpha^2 F_{\infty} \right]
\end{equation}
and subtracting each side from $F_{\infty}$ yields the final result.
\end{proof}

When applied to the shock-mixing layer problem with the leading Isomap coordinates taken as the target variables $\vect{g}(\xx) = (\phi_1(\xx), \phi_2(\xx))$, the greedy algorithm maximizing $f_{\gamma}$ first reveals the two sensor locations marked by green stars and then the black star in Figure~\ref{fig:shock_mixing_layer_snapshots} over the range of $0.02 \leq \gamma \leq 0.06$.
These choices produce the measurements shown in Figs.~\ref{subfig:secant_MSE_data2}~and~\ref{subfig:secant_MSE_data3}, which can be used to reveal the exact phase of the system.
Choosing smaller values of $\gamma$ yields different sensors that can also be used to reveal the phase, but with reduced robustness to measurement perturbations.
This method of maximizing detectable differences also reveals the correct $K=3$ fundamental Isomap eigen-coordinates from among the leading $100$ on the torus example in Eq.~\textbf{\ref{eqn:torus}} over a wide range $0.05\leq \gamma\leq 3.0$.
For implementation details, see the Appendix.

\subsection{Minimal Sensing to Meet an Error Tolerance}
\label{subsec:error_tol}
The approach presented above relies on an average and so does not guarantee that the target value $\vect{g}(\xx)$ can be recovered from the selected measurements $\mS(\xx)$ for every $\xx\in\calX$. 
In this section, we modify the technique developed above in order to provide such a guarantee by trying to find the minimum number of sensors so that every pair of states in the sampled set $\calX_N$ with target values separated by at least $\varepsilon$ correspond to measurements separated by at least $\gamma$.
If our sampled points $\calX_N$ come sufficiently close to every point of $\calX$ in the sense of Definition~\ref{def:epsilon_net}, then Proposition~\ref{prop:error_tol_separation}, given below, allows us to draw a similar conclusion about the measurements from all points in the underlying set $\calX$.
\begin{definition}[$\varepsilon_0$-net]
\label{def:epsilon_net}
An $\varepsilon_0$-net of $\calX$ is a finite subset $\calX_N \subset \calX$ satisfying
\begin{equation}
    \forall \xx\in\calX, \quad \exists \xx_i\in\calX_N \quad \mbox{such that}\quad \Vert \xx - \xx_i\Vert_2 < \varepsilon_0.
\end{equation}
We use the subscript $N$ to denote the number of points in $\calX_N$.
\end{definition}
In particular, if $\calX_N$ forms a fine enough $\varepsilon_0$-net of $\calX$, then Proposition~\ref{prop:error_tol_separation} guarantees that small measurement differences never correspond to large target value differences.
\begin{proposition}[Separation Guarantee on Underlying Set]
\label{prop:error_tol_separation}
Let $\calX_N$ be an $\varepsilon_0$-net of $\calX$ (see Definition~\ref{def:epsilon_net})
and let $\scrS$ be a subset of $\scrM$ satisfying
\begin{equation}
    \forall \xx_i,\xx_j\in\calX_N \qquad
    \Vert \vect{g}(\xx_i) - \vect{g}(\xx_j) \Vert_2 \geq \varepsilon \quad \Rightarrow \quad \Vert \mS(\xx_i) - \mS(\xx_j) \Vert_2 \geq \gamma.
\end{equation}
If $\mS$ and $\vect{g}$ are Lipschitz functions with Lipschitz constants $\Vert \mS\Vert_{\text{lip}}$ and $\Vert \vect{g}\Vert_{\text{lip}}$ respectively, then
\begin{multline}
    \forall \xx,\xx'\in\calX \qquad
    \Vert \vect{g}(\xx) - \vect{g}(\xx') \Vert_2 \geq \varepsilon + 2\varepsilon_0 \Vert \vect{g}\Vert_{\text{lip}} \\
    \quad \Rightarrow \quad 
    \Vert \mS(\xx) - \mS(\xx') \Vert_2 > \gamma - 2\varepsilon_0 \Vert \mS \Vert_{\text{lip}}.
\end{multline}
\end{proposition}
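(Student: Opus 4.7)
The plan is to reduce the claim for arbitrary points in $\calX$ to the hypothesized claim on the $\varepsilon_0$-net $\calX_N$ by bookending it with two triangle-inequality arguments — one in target space to invoke the hypothesis, one in measurement space to transfer the conclusion — each of which absorbs an $\varepsilon_0$-sized error through the Lipschitz constants of $\vect{g}$ and $\mS$.

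Given $\xx, \xx' \in \calX$ satisfying the hypothesized lower bound $\Vert \vect{g}(\xx) - \vect{g}(\xx')\Vert_2 \geq \varepsilon + 2\varepsilon_0 \Vert \vect{g}\Vert_{\text{lip}}$, I would first invoke Definition~\ref{def:epsilon_net} to pick $\xx_i, \xx_j \in \calX_N$ with $\Vert \xx - \xx_i \Vert_2 < \varepsilon_0$ and $\Vert \xx' - \xx_j \Vert_2 < \varepsilon_0$. The Lipschitz property of $\vect{g}$ then bounds each of $\Vert \vect{g}(\xx) - \vect{g}(\xx_i)\Vert_2$ and $\Vert \vect{g}(\xx') - \vect{g}(\xx_j)\Vert_2$ strictly by $\varepsilon_0 \Vert \vect{g}\Vert_{\text{lip}}$. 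A reverse triangle inequality absorbs these two error terms, exactly cancelling the extra $2\varepsilon_0 \Vert \vect{g}\Vert_{\text{lip}}$ that was built into the hypothesis, and yields $\Vert \vect{g}(\xx_i) - \vect{g}(\xx_j)\Vert_2 \geq \varepsilon$. The separation assumption on $\scrS$ then fires and produces $\Vert \mS(\xx_i) - \mS(\xx_j) \Vert_2 \geq \gamma$.

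The argument concludes with the symmetric step in measurement space: Lipschitz continuity of $\mS$ bounds $\Vert \mS(\xx) - \mS(\xx_i)\Vert_2$ and $\Vert \mS(\xx') - \mS(\xx_j)\Vert_2$ each strictly by $\varepsilon_0 \Vert \mS\Vert_{\text{lip}}$, and one final reverse triangle inequality descends from the lower bound on $\Vert \mS(\xx_i) - \mS(\xx_j) \Vert_2$ to $\Vert \mS(\xx) - \mS(\xx')\Vert_2 > \gamma - 2\varepsilon_0 \Vert \mS\Vert_{\text{lip}}$. The strict inequality in the conclusion is inherited directly from the strict $<\varepsilon_0$ in Definition~\ref{def:epsilon_net}. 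There is no real obstacle here: everything chains through triangle inequalities. The only subtlety worth watching is the bookkeeping on strict versus non-strict inequalities, and ensuring that the Lipschitz bounds on $\vect{g}$ and $\mS$ are applied in the direction consistent with the reverse triangle inequality at each step.
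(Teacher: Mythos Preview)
Your proposal is correct and matches the paper's own proof essentially step for step: pick net points $\xx_i,\xx_j$ close to $\xx,\xx'$, use the triangle inequality with the Lipschitz bound on $\vect{g}$ to trigger the hypothesis on $\calX_N$, then use the triangle inequality with the Lipschitz bound on $\mS$ to transfer the conclusion back to $\xx,\xx'$. Your attention to the strict-versus-nonstrict bookkeeping is exactly the only subtlety the paper's argument relies on as well.
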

\begin{proof}
The proof follows immediately from successive applications of the triangle inequality and so we relegate it to Appendix~\ref{app:proofs}
\end{proof}
Consequently, the approach described in this section allows one to reconstruct $\vect{g}(\xx)$ from a perturbed measurement $\mS(\xx) + \vect{n}$ by taking the value $\vect{g}(\xx')$ from its nearest neighbor $\mS(\xx')$ with $\xx'\in\calX$ and achieve small error $\Vert \vect{g}(\xx) - \vect{g}(\xx')\Vert_2$ as long as the perturbation $\Vert \vect{n}\Vert_2$ is below a threshold.

Supposing that the desired 
separation
can be obtained using all of the sensors, i.e., $\scrS=\scrM$, then we can take the sum in the objective $f_{\gamma}$ given by Eq.~\textbf{\ref{eqn:relaxed_MSF_objective}} only over those pairs $\xx,\xx'\in\calX_N$ with targets separated by at least $\Vert \vect{g}(\xx) - \vect{g}(\xx')\Vert_2 \geq \varepsilon$, i.e.,
\begin{equation}
    \boxed{
    f_{\gamma,\varepsilon}(\scrS) = \sum_{\substack{\xx, \xx' \in\calX_N\ : \\ \Vert \vect{g}(\xx) - \vect{g}(\xx')\Vert_2 \geq \varepsilon}} w_{\gamma, \xx, \xx'}(\scrS) \Vert \vect{g}(\xx) - \vect{g}(\xx')\Vert_2^2,
    }
    \label{eqn:max_sep_objective}
\end{equation}
and state the problem formally as
\begin{equation}
    \minimize_{\scrS\subseteq\scrM} \#(\scrS) \quad \mbox{subject to} \quad f_{\gamma,\varepsilon}(\scrS) = f_{\gamma,\varepsilon}(\scrM).
    \label{eqn:set_cover}
\end{equation}
We observe that if all points $\xx,\xx'\in\calX_N$ with $\Vert \vect{g}(\xx) - \vect{g}(\xx')\Vert_2 \geq \varepsilon$ can be separated by at least $\gamma$ using $\scrS = \scrM$ then $w_{\gamma,\xx,\xx'}(\scrM) = 1$ for each term in Eq.~\textbf{\ref{eqn:max_sep_objective}}.
On the other hand if there is such a pair $\xx,\xx'$ with $\Vert \mS(\xx) - \mS(\xx') \Vert_2 < \gamma$ then that term has $w_{\gamma,\xx,\xx'}(\scrS) < 1$ and $f_{\gamma,\varepsilon}(\scrS) < f_{\gamma,\varepsilon}(\scrM)$ as a consequence.

One can show, by using the same argument as in Lemma~\ref{lem:submodularity_of_detectable_differences} of the Appendix, that the objective Eq.~\textbf{\ref{eqn:max_sep_objective}} is submodular in addition to being normalized and monotone non-decreasing.
It follows that Eq.~\textbf{\ref{eqn:set_cover}} is a classical submodular set cover problem for which a greedy algorithm maximizing $f_{\gamma,\varepsilon}$ and stopping when $f_{\gamma,\varepsilon}(\scrS_K) = f_{\gamma,\varepsilon}(\scrM)$ will always find, up to a logarithmic factor, the minimum possible number of sensors \cite{Wolsey1982analysis}.
In particular, suppose that $\scrS^*$ is a subset of minimum size with $f_{\gamma,\varepsilon}(\scrS^*) = f(\scrM)$ and that the greedy algorithm chooses a sequence of subsets $\scrS_1, \ldots, \scrS_K$ with $f_{\gamma,\varepsilon}(\scrS_K) = f_{\gamma,\varepsilon}(\scrM)$.
If we define the ``increment condition number'' to be the ratio of the largest and smallest increments in the objective during greedy optimization
\begin{equation}
    \kappa = \frac{f_{\gamma,\varepsilon}(\scrS_1)}{f_{\gamma,\varepsilon}(\scrS_K) - f_{\gamma,\varepsilon}(\scrS_{K-1})},
    \label{eqn:conditon_number}
\end{equation}
then the classical result of L. A. Wolsey \cite{Wolsey1982analysis} proves that the greedily chosen set is no larger than
\begin{equation}
    \#(\scrS_K) \leq (1 + \ln{\kappa})\#(\scrS^*).
    \label{eqn:set_cover_guarantee}
\end{equation}

\subsection{Minimal Sensing to Meet an Amplification Tolerance}
\label{subsec:amplification_tol}
The approaches discussed above are capable of choosing measurements that separate states with distant target values by at least a fixed distance~$\gamma$.
However, the nearby measurements separated by less than $\gamma$ may not adequately capture the local behavior of the target variables as illustrated by the kink in the measurements made by these sensors in the shock-mixing layer flow shown in Figure~\ref{subfig:secant_MSE_data2}.
This means that while the state can be reconstructed from the measurements, its time derivative cannot.
This would be a major problem if we wish to build a reduced-order model of this system based only on the fluid velocities measured at the chosen points.
In addition, we may want the separation between the measurements to grow with the corresponding separation in target values, rather than potentially saturating at the $\gamma$ threshold.

Attempting to select sensors $\scrS$ whose measurements capture both the local and global structure of the target variables leads us to consider disturbance amplification as a performance metric.
In this section, we try to find the minimum number of sensors so that the Lipschitz constant of the reconstruction function does not exceed a user-specified threshold~$L$.  In practice, we do not have access to the true Lipschitz constant, so instead we bound a proxy defined below:
\begin{equation}
    \Vert \PhiS\Vert_{\text{lip}} 
    \approx \Vert \PhiS \Vert_{\calX_N, \text{lip}} 
    = \max_{\xx,\xx'\in\calX_N} \frac{\Vert \vect{g}(\xx) - \vect{g}(\xx')\Vert_2}{\Vert \mS(\xx) - \mS(\xx') \Vert_2} \leq L.
    \label{eqn:Lipschitz_condition}
\end{equation}
Proposition~\ref{prop:amplification_tol_separation}, below, shows that it suffices to enforce this condition over an $\varepsilon_0$-net, $\calX_N$, of $\calX$ (see Definition~\ref{def:epsilon_net}) in order to bound the amplification over all of $\calX$ up to a slight relaxation for measurement differences on the same scale $\varepsilon_0$ as the sampling.
\begin{proposition}[Amplification Guarantee on Underlying Set]
\label{prop:amplification_tol_separation}
Let $\calX_N$ be an $\varepsilon_0$-net of $\calX$ and let $\scrS$ be a subset of $\scrM$ satisfying
\begin{equation}
    \xx_i,\xx_j\in\calX_N \qquad
    \Vert \vect{g}(\xx_i) - \vect{g}(\xx_j) \Vert_2 \leq L \Vert \mS(\xx_i) - \mS(\xx_j) \Vert_2.
\end{equation}
If $\mS$ and $\vect{g}$ are Lipschitz functions, with Lipschitz constants $\Vert \mS\Vert_{\text{lip}}$ and $\Vert \vect{g}\Vert_{\text{lip}}$ respectively, then
\begin{multline}
    \forall \xx,\xx'\in\calX \qquad
    \Vert \vect{g}(\xx) - \vect{g}(\xx') \Vert_2 <
    L \Vert \mS(\xx) + \mS(\xx') \Vert_2 \\
    + 2\left( \Vert \vect{g}\Vert_{\text{lip}} + L \Vert \mS \Vert_{\text{lip}} \right)\varepsilon_0.
\end{multline}
\end{proposition}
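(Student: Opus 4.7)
The plan is to mirror the proof of the earlier separation guarantee (Proposition~\ref{prop:error_tol_separation}), exploiting the $\varepsilon_0$-net hypothesis to reduce the statement about arbitrary $\xx,\xx'\in\calX$ to the assumed bound on pairs in $\calX_N$, and then using the Lipschitz continuity of $\mS$ and $\vect{g}$ to absorb the approximation error on both sides. (I read $\Vert\mS(\xx)+\mS(\xx')\Vert_2$ in the statement as a typo for $\Vert\mS(\xx)-\mS(\xx')\Vert_2$, which is the quantity that makes the conclusion a genuine amplification bound.)

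First, given $\xx,\xx'\in\calX$, I would invoke Definition~\ref{def:epsilon_net} to pick $\xx_i,\xx_j\in\calX_N$ with $\Vert\xx-\xx_i\Vert_2<\varepsilon_0$ and $\Vert\xx'-\xx_j\Vert_2<\varepsilon_0$. Next, I would insert $\xx_i,\xx_j$ into $\Vert\vect{g}(\xx)-\vect{g}(\xx')\Vert_2$ by the triangle inequality to obtain the decomposition
\begin{equation*}
\Vert\vect{g}(\xx)-\vect{g}(\xx')\Vert_2 \leq \Vert\vect{g}(\xx)-\vect{g}(\xx_i)\Vert_2+\Vert\vect{g}(\xx_i)-\vect{g}(\xx_j)\Vert_2+\Vert\vect{g}(\xx_j)-\vect{g}(\xx')\Vert_2,
\end{equation*}
and then bound the outer two terms by $\Vert\vect{g}\Vert_{\text{lip}}\varepsilon_0$ each using Lipschitz continuity, and bound the middle term by $L\Vert\mS(\xx_i)-\mS(\xx_j)\Vert_2$ using the hypothesis on $\calX_N$.

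The last step is to push $\mS(\xx_i),\mS(\xx_j)$ back to $\mS(\xx),\mS(\xx')$ by another triangle inequality,
\begin{equation*}
\Vert\mS(\xx_i)-\mS(\xx_j)\Vert_2\leq \Vert\mS(\xx_i)-\mS(\xx)\Vert_2+\Vert\mS(\xx)-\mS(\xx')\Vert_2+\Vert\mS(\xx')-\mS(\xx_j)\Vert_2,
\end{equation*}
and to bound the two outer terms here by $\Vert\mS\Vert_{\text{lip}}\varepsilon_0$ apiece. Combining and collecting the $\varepsilon_0$ terms produces exactly $L\Vert\mS(\xx)-\mS(\xx')\Vert_2+2(\Vert\vect{g}\Vert_{\text{lip}}+L\Vert\mS\Vert_{\text{lip}})\varepsilon_0$, which yields the claim.

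There is essentially no hard step here: the argument is two applications of the triangle inequality sandwiched around the hypothesis. The only subtlety is accounting for the strict inequality in Definition~\ref{def:epsilon_net}, which propagates through the Lipschitz bounds and delivers the strict ``$<$'' in the conclusion; as in Proposition~\ref{prop:error_tol_separation}, this is the reason I would defer the routine bookkeeping to the appendix rather than the main text.
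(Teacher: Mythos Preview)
Your proposal is correct and follows essentially the same argument as the paper's proof: pick $\varepsilon_0$-net approximants $\xx_i,\xx_j$, apply the triangle inequality to $\Vert\vect{g}(\xx)-\vect{g}(\xx')\Vert_2$, use the hypothesis on the net pair, and then apply the triangle inequality again to $\Vert\mS(\xx_i)-\mS(\xx_j)\Vert_2$, bounding all four outer terms via Lipschitz continuity. Your reading of the ``$+$'' as a typo for ``$-$'' is also correct, as the paper's own proof derives the bound with $\Vert\mS(\xx)-\mS(\xx')\Vert_2$.
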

\begin{proof}
The proof is a direct application of the triangle inequality and so it is relegated Appendix~\ref{app:proofs}.
\end{proof}

If the Lipschitz condition in Eq.~\textbf{\ref{eqn:Lipschitz_condition}} over $\calX_N$ can be met using all of the sensors $\scrS=\scrM$ then the problem we hope to solve can be stated formally as in Eq.~\textbf{\ref{eqn:set_cover}}, where the condition Eq.~\textbf{\ref{eqn:Lipschitz_condition}} is imposed using a different normalized, monotone, submodular function
\begin{equation}
        \boxed{
        f_{L}(\scrS) = \sum_{\substack{\xx, \xx'\in\calX_N\\ \vect{g}(\xx) \neq \vect{g}(\xx')}}
        \min\left\lbrace \frac{\Vert \mS(\xx) - \mS(\xx') \Vert_2^2 }
        {\Vert \vect{g}(\xx) - \vect{g}(\xx') \Vert_2^2},\ 
        \frac{1}{L^2} \right\rbrace.
        }
        \label{eqn:Lipschitz_constraint_function}
\end{equation}
See Lemma~\ref{lem:submodularity_of_Lipschitz_objective} in the Appendix for proof of these properties.

We observe that if there is any secant $(\xx,\xx')\in\calX_N\times\calX_N$ for which Eq.~\textbf{\ref{eqn:Lipschitz_condition}} is not satisfied for a given $\scrS\subset\scrM$, then the corresponding term of Eq.~\textbf{\ref{eqn:Lipschitz_constraint_function}} is less than $1/L^2$ and $f_{L}(\scrS)<f_{L}(\scrM)$.
Otherwise, each term of Eq.~\textbf{\ref{eqn:Lipschitz_constraint_function}} is $1/L^2$ and we have $f_{L}(\scrS)=f_{L}(\scrM)$.
Again, the classical result in~\cite{Wolsey1982analysis} shows that a greedy approximation algorithm maximizing Eq.~\textbf{\ref{eqn:Lipschitz_constraint_function}} and stopping when $f_{L}(\scrS_K) = f_{L}(\scrM)$ finds the minimum possible number of sensors up to a logarithmic factor so that the Lipschitz condition Eq.~\textbf{\ref{eqn:Lipschitz_condition}} is satisfied.
In particular, the same guarantee stated in Eq.~\textbf{\ref{eqn:set_cover_guarantee}} holds for the Lipschitz objective too.

In some applications, we may instead want to find the measurements that minimize the reconstruction Lipschitz constant $\Vert \PhiS \Vert_{\calX_N, \text{lip}}$ using a fixed sensor budget $\#(\scrS) \leq C$.
By running the greedy algorithm repeatedly using different thresholds $L$ it is possible to obtain upper and sometimes lower bounds on this budget-constrained minimum Lipschitz constant $L^*$.
This idea is closely related to the approach of \cite{Krause2008robust}.
If the greedy algorithm using Lipschitz constant $L$ chooses sensors $\scrS$ that meet the budget $\#(\scrS) \leq C$ then $L$ is obviously an upper bound on $L^*$.
In practice, we can use a bisection search over $L$ to find nearly the smallest $L$ to any given tolerance for which $\#(\scrS) \leq C$.
To get the lower bound, the greedy algorithm is run with a small enough $L$ so that the bound on the minimum possible cost from Eq.~\textbf{\ref{eqn:set_cover_guarantee}} exceeds the budget
\begin{equation}
    C < \#(\scrS)/(1 + \ln \kappa).
\end{equation}
If this is the case, there is no collection of measurements with amplification at most $L$ that meets the cost constraint.
Thus, such an $L$ is a lower bound on the minimum possible amplification using measurement budget $C$.
Again, bisection search can be used to find nearly the largest $L$ so that $C < \#(\scrS)/(1 + \ln \kappa)$.

With the leading Isomap coordinates taken as the target variables $\vect{g}(\xx) = (\phi_1(\xx), \phi_2(\xx))$, a bisection search over $L$ identifies the three sensor locations marked by black squares in Figure~\ref{fig:shock_mixing_layer_snapshots} on the shock-mixing layer problem and the correct fundamental Isomap eigenfunctions $\phi_1,\phi_2,\phi_7$ on the torus example in Eq.~\textbf{\ref{eqn:torus}}.
The measurements made by these sensors on the shock-mixing layer problem are shown in Figure~\ref{subfig:secant_Lip_data} and indicate, by the lack of self-intersections, that they can be used to recover the phase.

The minimum number of sensors selected by the greedy algorithm that allow one to reconstruct both the relevant information $\vect{g}(\xx)$ and its time derivative is usually persistent over a wide range of Lipschitz constants with fewer sensors not being chosen until $L$ is made extremely large.
In the shock-mixing layer problem, three sensors that successfully reveal the underlying phase are found for values of $L$ ranging from $1868$ to $47624$, above which only two sensors that cannot reveal the underlying phase are selected.
The fact that a smaller set of inadequate sensors are selected for extremely large~$L$ reflects our use of a discrete approximation~$\calX_N$ of the continuous set~$\calX$.
Measurements from $\calX_N$ will almost never truly overlap to give $\Vert \PhiS \Vert_{\text{lip}} = \infty$ as they would for measurements from~$\calX$.

We also find that with $L=129$, the minimum possible number of sensors exceeds $\#(\scrS_K)/(1+\ln{\kappa}) = 3.18 > 3$ on the shock-mixing layer problem.
Therefore, the minimum possible reconstruction Lipschitz constant using three sensors that one might find by an exhaustive search over the $\binom{2210}{3} \approx 1.8\times 10^9$ possible combinations must be greater than $129$.
For implementation details, see the Appendix.

\section{Computational Considerations and Down-Sampling}
\label{sec:computational_considerations}
So far, the three secant-based methods we presented involve objectives that sum over $\mathcal{O}(N^2)$ pairs of points from the sampled set $\calX_N$.
In this section, we discuss how this large collection of secants can be sub-sampled to produce high-probability performance guarantees using a number of secants that scales more favorably with the size of the data set.
By sub-sampling we do pay a price in the sense that some ``bad'' secants may escape our sampling scheme and so we cannot draw the same conclusions about every point in the underlying set as we did in Propositions~\ref{prop:error_tol_separation}~and~\ref{prop:amplification_tol_separation} for the sensors chosen using the methods in Sections~\ref{subsec:error_tol}~and~\ref{subsec:amplification_tol}.
Instead, we can bound the size of the set of these ``bad'' secants with high probability by using a sampled collection of secants that scales linearly with $N$.
In the case of the total detectable difference-based objective discussed in Section~\ref{subsec:MSF}, we can prove high-probability bounds for the sum of squared undetectable differences in the target variables using a constant number of secants that doesn't depend on $N$ at all.

Before getting started with our discussion of down-sampling, let us first mention that the calculation of each of the objectives formulated in Section~\ref{sec:secants} is easily parallelizable, whether or not they are down-sampled.
Even though the computation of each objective function given by Eq.~\textbf{\ref{eqn:relaxed_MSF_objective}},~\textbf{\ref{eqn:max_sep_objective}},~or~\textbf{\ref{eqn:Lipschitz_constraint_function}} requires $\mathcal{O}(N^2)$ operations, the terms being summed can be distributed among many processors without the need for any communication except at the end when each processor reports the sum over the secants allocated to it.
Furthermore, because each secant-based objective we consider in this paper is submodular, it is not actually necessary to evaluate the objectives over all of the remaining sensors during each step of the greedy algorithm.
By employing the ``accelerated greedy'' algorithm of M. Minoux \cite{Minoux1978accelerated}, the same set of sensors can be found using a minimal number of evaluations of the objective.
We provide a summary of the accelerated greedy algorithm in Section~\ref{subapp:AG_algorithm} of the Appendix.

The computational cost of evaluating the objectives in  Sections~\ref{subsec:error_tol}~and~\ref{subsec:amplification_tol} during each step of the greedy algorithm may also be reduced by
exploiting the fact that each term in the sum is truncated once the measurements achieve a certain level of separation.
This means that only the nearest neighbors within a known distance of each $\mS(\xx)$, $\xx\in\calX_N$ need to be computed and rest of the terms all achieve the threshold and need not be computed explicitly.
To compute the sum efficiently, fixed-radius near neighbors algorithms \cite{Bentley1975survey}, \cite{Bentley1977complexity} could be employed.

\subsection{Maximizing Detectable Differences}
\label{subsec:sampling_for_detectable_differences}
The main results of this section are Theorems~\ref{thm:sampled_mean_fluctuation_near_optimality}~and~\ref{thm:mean_square_undetectable_difference}, which show that with high probability we can obtain guaranteed performance in terms of mean undetectable differences by sampling a constant number of secants (i.e., independent of $N$) selected at random.
In particular, Theorem~\ref{thm:sampled_mean_fluctuation_near_optimality} bounds the worst-case performance of the greedy algorithm with high probability using the sampled objective. 
Theorem~\ref{thm:mean_square_undetectable_difference}, on the other hand, shows that if one only considers randomly sampled secants with target variables separated by at least $\varepsilon$ (see Section~\ref{subsec:error_tol}), then the mean square undetectable difference between target values is less than $2\varepsilon^2$ with high probability.

While the original mean square fluctuation objective in Eq.~\textbf{\ref{eqn:relaxed_MSF_objective}} was formulated over the discrete set $\calX_N$, we can actually prove more versatile approximation results about an objective defined as an average over the entire, possibly continuous, set $\calX$ with respect to a probability measure $\mu$.
In particular, we assume the target variables $\vect{g}$ and measurements $\vect{m}_j$, $j\in\scrM$ are measurable functions on $\calX$ and consider an average detectable difference objective
\begin{equation}
    f_{\gamma}(\scrS) = \int_{\calX\times\calX} w_{\gamma,\xx,\xx'}(\scrS) \left\Vert \vect{g}(\xx) - \vect{g}(\xx')\right\Vert_2^2 \ d\mu(\xx)d\mu(\xx')
    \label{eqn:relaxed_MSF_integral_objective}
\end{equation}
with $w_{\gamma,\xx,\xx'}(\scrS)$ defined by Eq.~\textbf{\ref{eqn:relaxed_weight_function}}.
We also denote the average fluctuations between target variables associated with states whose measurements are closer together than the detection threshold $\gamma$ by
\begin{equation}
    F_{\gamma}(\scrS) :=
    \int_{\substack{(\vect{x},\vect{x}') \in \calX\times\calX\ :\\
    \Vert \mS(\vect{x}) - \mS(\vect{x}')\Vert_2 < \gamma }} \left\Vert \vect{g}(\xx) - \vect{g}(\xx')\right\Vert_2^2\ d\mu(\xx)d\mu(\xx')
    \label{eqn:continuous_mean_undetectable_differences}
\end{equation}
and the total fluctuation among target variables by
\begin{equation}
    F_{\infty} :=
    \int_{\calX\times\calX} \left\Vert \vect{g}(\xx) - \vect{g}(\xx')\right\Vert_2^2\ d\mu(\xx)d\mu(\xx').
    \label{eqn:total_fluctuation_continuous}
\end{equation}
Note that the original objective formulated in Section~\ref{subsec:MSF} as well as Eq.~\textbf{\ref{eqn:total_undetectable_differences}} are special cases of Eq.~\textbf{\ref{eqn:relaxed_MSF_integral_objective}} and Eq.~\textbf{\ref{eqn:continuous_mean_undetectable_differences}}, up to an irrelevant constant factor, when $\mu=\frac{1}{N}\sum_{\xx\in\calX_N}\delta_{\xx}$ and $\delta_{\xx}(A) = \mathbbm{1}\lbrace \xx\in A \rbrace$ is the Dirac measure on Borel sets $A\subseteq\calX$.
By Lemma~\ref{lem:submodularity_of_detectable_differences}, Eq.~\textbf{\ref{eqn:relaxed_MSF_integral_objective}} is submodular in addition to being normalized and monotone non-decreasing.
Furthermore, by an identical argument to Theorem~\ref{thm:submodular_relaxation}, we know that the mean square fluctuation between target variables associated with states whose measurements are closer together than a reduced detection thereshold $\alpha\gamma$ with $0<\alpha<1$ is bounded above by
\begin{equation}
    F_{\alpha\gamma}(\scrS)
    \leq \frac{1}{1-\alpha^2}\left[ F_{\infty} - f_{\gamma}(\scrS) \right].
    \label{eqn:undetectable_differences_bound}
\end{equation}

We begin with Lemma~\ref{lem:uniform_bound_on_sampled_mean_fluction_objective}, which shows that by sampling a large enough collection of points $\xx_1,\xx'_1, \ldots,\xx_m,\xx'_m\in\calX$ independently according to $\mu$, the objective $f_{\gamma}$ can be uniformly approximated by a sample-based average
\begin{equation}
    f_{\gamma,m}(\scrS) = \frac{1}{m}\sum_{i=1}^m w_{\gamma,\xx_i,\xx_i'}(\scrS) \left\Vert \vect{g}(\xx_i) - \vect{g}(\xx_i') \right\Vert_2^2
    \label{eqn:relaxed_MSF_sampled_objective}
\end{equation}
over all $\scrS\subseteq\scrM$ of size $\#(\scrS)\leq L$ with high probability over the sample points.
Most importantly, the number of sample points needed for this approximation guarantee is independent of the distribution $\mu$.
Consequently if we have access to $N$ points making up $\calX_N$ that have been sampled independently according to $\mu$, we need only keep the first $2m$ of them to accurately approximate the objective.
The number $m$ of such sub-sampled points depends only on the quality of the probabilistic guarantee and not on the size of the data set $N$.

\begin{lemma}[Accuracy of the Down-Sampled Objective]
    \label{lem:uniform_bound_on_sampled_mean_fluction_objective}
    Consider the objectives $f_{\gamma}$ and $f_{\gamma,m}$ defined according to Eq.~\textbf{\ref{eqn:relaxed_MSF_integral_objective}} and Eq.~\textbf{\ref{eqn:relaxed_MSF_sampled_objective}}.
    Assume that the target function is bounded over $\calX$ so that 
    \begin{equation}
        D = \diam \vect{g}(\calX) = \sup_{\xx,\xx'\in\calX}\Vert \vect{g}(\xx) - \vect{g}(\xx')\Vert_2 < \infty.
    \end{equation}
    and that $\xx_1,\xx'_1, \ldots,\xx_m,\xx'_m\in\calX$ are sampled independently according to a probability measure $\mu$ on $\calX$.
    If the number of sampled pairs is at least
    \begin{equation}
        m \geq \frac{D^4}{2\varepsilon^2}\left[ L\ln{\#(\scrM)} - \ln{\left( (L-1)! \right)} - \ln{\left(\frac{p}{2}\right)} \right],
        \label{eqn:num_secants_for_bound_on_sampled_mean_fluction}
    \end{equation}
    then $\vert f_{\gamma,m}(\scrS) - f_{\gamma}(\scrS) \vert < \varepsilon$ for every $\scrS\subseteq\scrM$ of size $\#(\scrS)\leq L$ with probability at least $1-p$.
\end{lemma}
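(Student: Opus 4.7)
The approach is a standard empirical process argument: combine Hoeffding concentration at a single $\scrS$ with a union bound over the finite family of candidate subsets. The key structural inputs from the setup are that $0\leq w_{\gamma,\xx,\xx'}(\scrS)\leq 1$ and $\Vert \vect{g}(\xx)-\vect{g}(\xx')\Vert_2 \leq D$, so every sampled summand lies in $[0,D^2]$. Notably, the distribution $\mu$ never reappears explicitly in the bound, which is what allows the required sample size $m$ to be independent of $N$ and of $\mu$.

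First I would fix $\scrS\subseteq\scrM$ and view $f_{\gamma,m}(\scrS)$ as the empirical mean of the i.i.d.\ random variables
\begin{equation}
W_i(\scrS) \;=\; w_{\gamma,\xx_i,\xx_i'}(\scrS)\,\bigl\Vert \vect{g}(\xx_i)-\vect{g}(\xx_i')\bigr\Vert_2^2, \qquad i=1,\ldots,m,
\end{equation}
whose common expectation under $\mu\otimes\mu$ is $f_{\gamma}(\scrS)$ by Fubini. Since $W_i(\scrS)\in[0,D^2]$, Hoeffding's inequality gives
\begin{equation}
\Pr\!\left(\,|f_{\gamma,m}(\scrS)-f_\gamma(\scrS)|\geq \varepsilon\,\right)\;\leq\;2\exp\!\left(-\tfrac{2m\varepsilon^2}{D^4}\right).
\end{equation}
Next I would take a union bound over all nonempty $\scrS\subseteq\scrM$ with $\#(\scrS)\leq L$ (the case $\scrS=\emptyset$ is trivial because both objectives vanish there). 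The number of such subsets is $\sum_{k=1}^{L}\binom{\#(\scrM)}{k}$, which I would bound by $\#(\scrM)^L/(L-1)!$ using $\binom{\#(\scrM)}{k}\leq \#(\scrM)^k/k!$ together with the fact that, for $\#(\scrM)\geq L$, the sum is dominated by its top term $\#(\scrM)^L/L!$ up to a factor of at most $L$.

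Finally, imposing that the union-bounded failure probability $2\,\tfrac{\#(\scrM)^L}{(L-1)!}\exp(-2m\varepsilon^2/D^4)$ be at most $p$, taking logarithms, and solving for $m$ reproduces exactly Eq.~\textbf{\ref{eqn:num_secants_for_bound_on_sampled_mean_fluction}}. The Hoeffding step and the inversion are routine; the only element requiring any care is the binomial-sum estimate, and the corner cases (small $L$ or $\#(\scrM)$) are benign because the stated inequality is strictly weaker than what the pointwise Hoeffding bound alone would already guarantee. I do not anticipate a serious obstacle, so the main thing to highlight in the write-up is that the sample complexity scales only as $L\log\#(\scrM)$ and is entirely free of $N$ and of $\mu$.
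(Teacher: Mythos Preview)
Your proposal is correct and follows essentially the same approach as the paper's own proof: fix $\scrS$, apply Hoeffding to the bounded i.i.d.\ summands $W_i(\scrS)\in[0,D^2]$, then take a union bound over all $\scrS$ with $\#(\scrS)\leq L$ using the same combinatorial estimate $\sum_{k=1}^{L}\binom{\#(\scrM)}{k}\leq \#(\scrM)^{L}/(L-1)!$, and finally invert for $m$. The paper's argument and yours are identical in structure and in every substantive step.
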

\begin{proof}
    For simplicity, we will drop $\gamma$ from the subscripts on our objectives since $\gamma$ remains fixed throughout the proof.
    Let us begin by fixing a set $\scrS\subseteq\scrM$ of size $\#(\scrS) \leq L$ and denoting $M=\#(\scrM)$ for short.
    Under the assumption that the points $\xx_i,\xx'_i$ are sampled independently and identically under $\mu$, the random variables
    \begin{equation}
        Z_i(\scrS) = w_{\xx_i,\xx_i'}(\scrS) \Vert \vect{g}(\xx_i) - \vect{g}(\xx_i') \Vert_2^2, \quad i=1,\ldots,m,
    \end{equation}
    are independent and bounded by $0 \leq Z_i(\scrS) \leq D^2$.
    The value of the optimization objective is the expectation $f(\scrS) = \mathbb{E}[Z_i(\scrS)]$ and the value of our sub-sampled objective is the empirical average
    \begin{equation}
        f_m(\scrS) = \frac{1}{m}\sum_{i=1}^m Z_i(\scrS).
    \end{equation}
    Hoeffding's inequality allows us to bound the probability that $f_m(\scrS)$ differs from $f(\scrS)$ by more than $\varepsilon$ according to
    \begin{equation}
        \mathbb{P}\left\lbrace \left\vert f_m(\scrS) - f(\scrS) \right\vert \geq \varepsilon \right\rbrace \leq 2 \exp\left( -\frac{2 m \varepsilon^2}{D^4} \right).
    \end{equation}
    We want the objective to be accurately approximated with tolerance $\varepsilon$ uniformly over all collections of sensors of size $\#(\scrS) \leq L$. 
    We unfix $\scrS$ by taking the union bound
    \begin{equation}
        \mathbb{P}\bigcup_{\substack{\scrS\subseteq\scrM: \\ \#(\scrS) \leq L}}\left\lbrace \left\vert f_m(\scrS) - f(\scrS) \right\vert \geq \varepsilon \right\rbrace \leq  \sum_{\substack{\scrS\subseteq\scrM: \\ \#(\scrS) \leq L}} 2 \exp\left( -\frac{2 m \varepsilon^2}{D^4} \right).
    \end{equation}
    The combinatorial inequality
    \begin{equation}
        \#\left(\lbrace \scrS\subseteq\scrM \ :\ \#(\scrS) \leq L \rbrace\right) = \sum_{k=1}^L \binom{M}{k}
        \leq \sum_{k=1}^L \frac{M^k}{k !}
        \leq L \frac{M^L}{L !}
        = \frac{M^L}{(L-1)!}
        \label{eqn:comb_ineq}
    \end{equation}
    yields the bound
    \begin{equation}
        \mathbb{P}\bigcup_{\substack{\scrS\subseteq\scrM: \\ \#(\scrS) \leq L}}\left\lbrace \left\vert f_m(\scrS) - f(\scrS) \right\vert \geq \varepsilon \right\rbrace \leq  2\exp\left( L\ln{M} - \ln{\left( (L-1)! \right)} - \frac{2 m \varepsilon^2}{D^4} \right) \leq p
    \end{equation}
    when the number of sampled pairs $\xx_i,\xx_i'$ satisfies Eq.~\textbf{\ref{eqn:num_secants_for_bound_on_sampled_mean_fluction}}.
\end{proof}

The uniform accuracy of the sampled objective $f_{\gamma,m}$ over the feasible subsets $\scrS$ in our optimization problem
\begin{equation}
    \maximize_{\substack{\scrS\subseteq\scrM \ :\ \#(\scrS)\leq K}} f_{\gamma}(\scrS)
\end{equation}
established in Lemma~\ref{lem:uniform_bound_on_sampled_mean_fluction_objective} leads to performance guarantees for the greedy approximation algorithm when the sampled objective $f_{\gamma.m}$ is used in place of $f_{\gamma}$.
In particular, Theorem~\ref{thm:sampled_mean_fluctuation_near_optimality} shows that the greedy algorithm can be applied to the sampled objective Eq.~\textbf{\ref{eqn:relaxed_MSF_sampled_objective}} and still achieve near-optimal performance with respect to the original objective Eq.~\textbf{\ref{eqn:relaxed_MSF_integral_objective}} on the underlying set $\calX$ with high probability.
This sampling-based approach therefore completely eliminates the $\mathcal{O}(N^2)$ dependence of the computational complexity involved in evaluating the objective at a penalty on the worst case performance that can be made arbitrarily small by sampling more points.

\begin{theorem}[Greedy Performance using Sampled Objective]
\label{thm:sampled_mean_fluctuation_near_optimality}
Assume the same hypotheses as Lemma~\ref{lem:uniform_bound_on_sampled_mean_fluction_objective} and let $\scrS^*$ denote an optimal solution of
\begin{equation}
    \maximize_{\substack{\scrS\subseteq\scrM \ :\ \#(\scrS)\leq K}} f_{\gamma}(\scrS),
\end{equation}
with $f_{\gamma}$ given by Eq.~\textbf{\ref{eqn:relaxed_MSF_integral_objective}} and $K \leq L$.
If $\scrS_1, \ldots, \scrS_L$ are the sequence of subsets selected by the greedy algorithm using the sampled objective $f_{\gamma,m}$ given by Eq.~\textbf{\ref{eqn:relaxed_MSF_sampled_objective}}, then
\begin{equation}
    f_{\gamma}(\scrS_k) \geq \left(1 - e^{-k/K} \right)f_{\gamma}(\scrS^*) - \left(2 - e^{-k/K} \right)\varepsilon, \qquad k=1,\ldots, L,
\end{equation}
with probability at least $1-p$ over the sample points.
\end{theorem}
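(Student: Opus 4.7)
The plan is to condition on the high-probability event furnished by Lemma~\ref{lem:uniform_bound_on_sampled_mean_fluction_objective} and then reduce the statement to the classical Nemhauser--Wolsey guarantee applied to the sampled objective, paying a two-sided approximation toll only at the endpoints of the comparison chain.

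Throughout the argument, work on the event $E$ on which $\vert f_{\gamma,m}(\scrS) - f_{\gamma}(\scrS) \vert < \varepsilon$ holds for every $\scrS\subseteq\scrM$ with $\#(\scrS) \leq L$; by Lemma~\ref{lem:uniform_bound_on_sampled_mean_fluction_objective} this event has probability at least $1-p$. First I would observe that $f_{\gamma,m}$ is a nonnegative weighted sum of terms having exactly the same structure as $f_\gamma$ (just with the continuous measure $\mu\otimes\mu$ replaced by the empirical product measure over the $m$ sampled pairs), so the proof of Lemma~\ref{lem:submodularity_of_detectable_differences} applies verbatim to show that $f_{\gamma,m}$ is normalized, monotone non-decreasing, and submodular. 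Hence the classical bound of Nemhauser--Wolsey (the same inequality invoked in Eq.~\textbf{\ref{eqn:maximization_guarantee}}) gives, for the greedy sequence $\scrS_1,\dots,\scrS_L$ driven by $f_{\gamma,m}$,
\begin{equation}
    f_{\gamma,m}(\scrS_k) \geq \left( 1 - e^{-k/K} \right) f_{\gamma,m}(\scrS^*_m), \qquad k=1,\dots,L,
\end{equation}
where $\scrS^*_m$ is an optimal solution of $\max_{\#(\scrS)\leq K} f_{\gamma,m}(\scrS)$.

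Next I would translate this back into a statement about $f_\gamma$ and $\scrS^*$ using only the uniform approximation bound from Lemma~\ref{lem:uniform_bound_on_sampled_mean_fluction_objective} applied at $\scrS^*$ and at $\scrS_k$, both of which have size at most $L$ (since $K\leq L$ and $k\leq L$). By optimality of $\scrS^*_m$ for the sampled objective and the uniform bound applied to $\scrS^*$,
\begin{equation}
    f_{\gamma,m}(\scrS^*_m) \geq f_{\gamma,m}(\scrS^*) > f_\gamma(\scrS^*) - \varepsilon,
\end{equation}
and the uniform bound applied to $\scrS_k$ gives $f_\gamma(\scrS_k) > f_{\gamma,m}(\scrS_k) - \varepsilon$. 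Chaining these three inequalities,
\begin{equation}
    f_\gamma(\scrS_k) > \left( 1 - e^{-k/K} \right) \bigl[ f_\gamma(\scrS^*) - \varepsilon \bigr] - \varepsilon = \left( 1 - e^{-k/K} \right) f_\gamma(\scrS^*) - \left( 2 - e^{-k/K} \right) \varepsilon,
\end{equation}
which is exactly the claim on the event $E$.

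The argument is almost entirely bookkeeping once Lemma~\ref{lem:uniform_bound_on_sampled_mean_fluction_objective} is in hand; the only substantive design choice is to avoid invoking the $\varepsilon$-approximation inside the Nemhauser--Wolsey recursion (which would bleed an additive error at every greedy step and produce a coefficient proportional to $K$) by instead running the full submodular recursion on $f_{\gamma,m}$ and paying the $\varepsilon$ cost only once at each endpoint. The only place one has to be mildly careful is the swap from $\scrS^*_m$ (optimum of the sampled objective, over which the Nemhauser--Wolsey guarantee speaks) to $\scrS^*$ (optimum of the true objective, which appears in the statement); this swap is free because $\scrS^*$ itself is a feasible set of size at most $K$ for the sampled problem, so $f_{\gamma,m}(\scrS^*_m) \geq f_{\gamma,m}(\scrS^*)$ without any loss.
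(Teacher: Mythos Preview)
Your proof is correct and follows essentially the same route as the paper: condition on the uniform approximation event from Lemma~\ref{lem:uniform_bound_on_sampled_mean_fluction_objective}, apply the Nemhauser--Wolsey bound to the sampled objective $f_{\gamma,m}$, swap the sampled optimum $\scrS^*_m$ for the true optimum $\scrS^*$ via feasibility, and pay the $\varepsilon$ approximation cost once at each endpoint of the chain. Your explicit justification that $f_{\gamma,m}$ is itself normalized, monotone, and submodular (so that the greedy guarantee actually applies to it) is a detail the paper leaves implicit.
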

\begin{proof}
For simplicity, we will drop $\gamma$ from the subscripts on our objectives since $\gamma$ remains fixed throughout the proof.
Let $\scrS_m^*$ denote the optimal solution of
\begin{equation}
    \maximize_{\substack{\scrS\subseteq\scrM \ :\ \#(\scrS)\leq K}} f_m(\scrS),
\end{equation}
 using the sampled objective and assume that $\vert f(\scrS) - f_m(\scrS) \vert < \varepsilon$ for every subset $\scrS$ of $\scrM$ with $\#(\scrS)\leq L$.
 According to Lemma~\ref{lem:uniform_bound_on_sampled_mean_fluction_objective}, this happens with probability at least $1-p$ over the sample points.
 Using this uniform approximation and the guarantee on the performance of the greedy algorithm for $f_m$, we have
 \begin{equation}
     f(\scrS_k) \geq f_m(\scrS_k) - \varepsilon
     \geq \left(1 - e^{-k/K} \right)f_m(\scrS_m^*) - \varepsilon.
 \end{equation}
 Since $\scrS_m^*$ is the optimal solution using the sampled objective, we must have $f_m(\scrS_m^*) \geq f_m(\scrS^*)$.
 Using this fact and the uniform approximation gives
  \begin{align}
     f(\scrS_k) &\geq \left(1 - e^{-k/K} \right)f_m(\scrS^*) - \varepsilon \\
     &\geq \left(1 - e^{-k/K} \right)\left(f(\scrS^*) - \varepsilon\right) - \varepsilon.
 \end{align}
 Combining the terms on $\varepsilon$ completes the proof.
\end{proof}

\begin{remark}
While Theorem~\ref{thm:sampled_mean_fluctuation_near_optimality} tells us that down-sampling has a small effect on the worst-case performance of the greedy algorithm, unfortunately, we cannot say much beyond that.
It may be the case that the greedy solution using the sampled objective $f_{\gamma,m}$ produces a very different value of $f_{\gamma}$ than the greedy solution using $f_{\gamma}$ directly, even though these functions are both submodular and differ by no more than an arbitrarily small $\varepsilon>0$.
Consider the following example in Table~\ref{tab:a_sensitive_submodular_function} where we have two submodular objectives, $f$ and $\tilde{f}$, that differ by no more than $\varepsilon\ll 1$, yet the greedy algorithm applied to $f$ and $\tilde{f}$ yield results that differ by $\mathcal{O}(1)$.
\begin{table}[h]
\centering
\begin{tabular}{c|c|c}
    $\scrS$ & $f(\scrS)$ & $\tilde{f}(\scrS)$ \\
    \hline
    $\emptyset$ & $0$ & $0$ \\
    $\lbrace a \rbrace$ & $2+\varepsilon$ & $2$ \\
    $\lbrace b \rbrace$ & $2$ & $2+\varepsilon$ \\
    $\lbrace c \rbrace$ & $1$ & $1$ \\
    $\lbrace a,b \rbrace$ & $2+\varepsilon$ & $2+2\varepsilon$ \\
    $\lbrace a,c \rbrace$ & $3+\varepsilon$ & $3$ \\
    $\lbrace b,c \rbrace$ & $2$ & $2+\varepsilon$ \\
    $\lbrace a,b,c \rbrace$ & $3$ & $3$ \\
\end{tabular}
\caption{Two submodular functions are given that differ by no more than $\varepsilon \ll 1$, yet produce very different greedy solutions and objective values.}
\label{tab:a_sensitive_submodular_function}
\end{table}
One can easily verify that both functions in Table~\ref{tab:a_sensitive_submodular_function} are normalized, monotone, and submodular.
When selecting subsets of size $2$, the greedy algorithm for $f$ picks $\emptyset \to \lbrace a \rbrace \to \lbrace a,c \rbrace$ and the greedy algorithm for for $\tilde{f}$ picks $\emptyset \to \lbrace b \rbrace \to \lbrace a,b \rbrace$.
The values of $f$ on the chosen sets, $f(\lbrace a,c \rbrace) = 3+\varepsilon$ and $f(\lbrace a,b \rbrace)=2+2\varepsilon$, differ by $1-\varepsilon \gg \varepsilon$, and similarly for $\tilde{f}(\lbrace a,c \rbrace) = 3$ and $\tilde{f}(\lbrace a,b \rbrace)=2+\varepsilon$, which also differ by $1-\varepsilon \gg \varepsilon$.
Thus the performance of the greedy algorithm can be sensitive to small perturbations of the objective even though the lower bound on performance is not sensitive.
\end{remark}

It turns out that by solving the error tolerance problem in Section~\ref{subsec:error_tol} greedily using a down-sampled objective, we can provide high probability bounds directly on the mean square undetectable differences in Eq.~\textbf{\ref{eqn:continuous_mean_undetectable_differences}}.
We will use the down-sampled objective
\begin{equation}
    f_{\gamma,\varepsilon,m}(\scrS) = \frac{1}{m}\sum_{\substack{i\in\lbrace 1,\ldots, m\rbrace : \\ \Vert \vect{g}(\xx_i) - \vect{g}(\xx'_i)\Vert_2 \geq \varepsilon}} w_{\gamma, \xx_i, \xx'_i}(\scrS) \Vert \vect{g}(\xx_i) - \vect{g}(\xx'_i)\Vert_2^2,
\end{equation}
with the relaxed weight function in Eq.\textbf{\ref{eqn:relaxed_weight_function}} in a greedy approximation algorithm for the submodular set-cover problem
\begin{equation}
    \minimize_{\scrS\subseteq\scrM} \#(\scrS) \quad \mbox{subject to} \quad f_{\gamma,\varepsilon,m}(\scrS) = f_{\gamma,\varepsilon,m}(\scrM).
    \label{eqn:sampled_set_cover}
\end{equation}
Using the resulting greedy solution $\mathscr{S}_K$ that satisfies $f_{\gamma,\varepsilon,m}(\scrS_K) = f_{\gamma,\varepsilon,m}(\scrM) = \tilde{f}_{\gamma,\varepsilon,m}(\scrM)$, Theorem~\ref{thm:mean_square_undetectable_difference} provides a high-probability bound on the mean square undetectable difference in the target variables, Eq.~\textbf{\ref{eqn:continuous_mean_undetectable_differences}},
over the entire set $\calX\times\calX$ rather than merely $\calX_N\times\calX_N$.

\begin{theorem}[Sample Separation Bound on Undetectable Differences]
    \label{thm:mean_square_undetectable_difference}
    Consider the functions $f_{\gamma,\varepsilon,m}$ and $F_{\gamma}$ defined by Eqs.~\textbf{\ref{eqn:sampled_set_cover}}~and~\textbf{\ref{eqn:continuous_mean_undetectable_differences}} and assume that the condition $\Vert \vect{m}_{\scrM}(\xx)-\vect{m}_{\scrM}(\xx')\Vert_2 \geq \gamma$ holds for $\mu$-almost every $\xx,\xx'\in\calX$ such that $\Vert \vect{g}(\xx) - \vect{g}(\xx') \Vert_2 \geq \varepsilon$.
    Suppose that the target function is bounded over $\calX$ so that 
    \begin{equation}
        D = \diam \vect{g}(\calX) = \sup_{\xx,\xx'\in\calX}\Vert \vect{g}(\xx) - \vect{g}(\xx')\Vert_2 < \infty.
    \end{equation}
    and that $\xx_1,\xx'_1, \ldots,\xx_m,\xx'_m\in\calX$ are sampled independently according to the probability measure $\mu$ on $\calX$.
    If the number of sampled pairs is at least
    \begin{equation}
        m \geq \frac{D^4}{2\varepsilon^4}\left( \#(\scrM) \ln 2 - \ln{p} \right),
    \end{equation}
    and the greedy approximation of Eq.~\textbf{\ref{eqn:sampled_set_cover}} produces a set $\scrS_K$,
    then
    \begin{equation}
        F_{\gamma}(\scrS_K) < 2\varepsilon^2
    \end{equation}
    with probability at least $1-p$.
\end{theorem}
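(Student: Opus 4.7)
The plan is to reduce the claimed bound $F_\gamma(\scrS_K) < 2\varepsilon^2$ to a measure estimate on a ``bad set'' of secants, then combine Hoeffding's inequality with a union bound over the $2^{\#(\scrM)}$ candidate subsets so that the greedy stopping condition forces this measure to be small.

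First, I would decompose $F_\gamma(\scrS_K)$ by target separation. Define
\begin{equation}
    B_\scrS := \left\{ (\xx,\xx')\in\calX\times\calX\ :\ \left\Vert\vect{g}(\xx) - \vect{g}(\xx')\right\Vert_2\geq\varepsilon,\ \left\Vert\mS(\xx) - \mS(\xx')\right\Vert_2 < \gamma \right\},
\end{equation}
and split the region $\{\Vert\mS(\xx)-\mS(\xx')\Vert_2<\gamma\}$ according to whether $\Vert\vect{g}(\xx)-\vect{g}(\xx')\Vert_2$ is less than or at least $\varepsilon$. Since $\mu\otimes\mu$ has total mass $1$, on the first piece the integrand is strictly less than $\varepsilon^2$, and on the second piece at most $D^2$, so
\begin{equation}
    F_\gamma(\scrS_K) < \varepsilon^2 + D^2\,(\mu\otimes\mu)(B_{\scrS_K}).
\end{equation}
It therefore suffices to show $(\mu\otimes\mu)(B_{\scrS_K}) < \varepsilon^2/D^2$ with probability at least $1-p$.

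Next, for each fixed $\scrS\subseteq\scrM$, the random variables $U_i(\scrS):=\bbone\{(\xx_i,\xx_i')\in B_\scrS\}$ are i.i.d.\ Bernoulli with mean $(\mu\otimes\mu)(B_\scrS)$, so Hoeffding's one-sided inequality gives
\begin{equation}
    \mathbb{P}\left\{(\mu\otimes\mu)(B_\scrS) \geq \tfrac{1}{m}\sum_{i=1}^{m}U_i(\scrS) + \delta\right\} \leq \exp(-2m\delta^2).
\end{equation}
Taking $\delta = \varepsilon^2/D^2$ and a union bound over all $2^{\#(\scrM)}$ subsets yields a probability at most $2^{\#(\scrM)}\exp(-2m\varepsilon^4/D^4)$, which is at most $p$ precisely under the stated lower bound on $m$.

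Finally, I would argue that on the complement of this bad event, the greedy stopping condition forces $\tfrac{1}{m}\sum_i U_i(\scrS_K)=0$. By the hypothesis, $\left\Vert\vect{m}_\scrM(\xx)-\vect{m}_\scrM(\xx')\right\Vert_2\geq\gamma$ for $\mu\otimes\mu$-almost every target-far pair, so almost surely $w_{\gamma,\xx_i,\xx_i'}(\scrM)=1$ for every sampled index $i$ with $\Vert\vect{g}(\xx_i)-\vect{g}(\xx_i')\Vert_2\geq\varepsilon$. Since $w_{\gamma,\xx,\xx'}(\cdot)$ is monotone non-decreasing in $\scrS$ (adding sensors can only enlarge $\Vert\mS-\mS'\Vert_2$), we have $w_{\gamma,\xx_i,\xx_i'}(\scrS_K)\leq 1$, and the greedy stopping condition $f_{\gamma,\varepsilon,m}(\scrS_K)=f_{\gamma,\varepsilon,m}(\scrM)$ forces term-by-term equality in the nonnegative sum, so $w_{\gamma,\xx_i,\xx_i'}(\scrS_K)=1$ for each such $i$. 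Hence no sampled pair lies in $B_{\scrS_K}$ and, combined with the uniform Hoeffding bound, $(\mu\otimes\mu)(B_{\scrS_K}) < \varepsilon^2/D^2$, closing the proof. The main subtlety is balancing the combinatorial $2^{\#(\scrM)}$ union-bound penalty against the Hoeffding tolerance $\delta=\varepsilon^2/D^2$ dictated by the decomposition, which produces the $D^4/(2\varepsilon^4)$ scaling of $m$.
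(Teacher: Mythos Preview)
Your proof is correct and follows essentially the same strategy as the paper: split $F_\gamma$ into the target-close part (bounded by $\varepsilon^2$) and the target-far ``bad set'' part, control the latter via Hoeffding plus a union bound over all $2^{\#(\scrM)}$ subsets, and use the greedy stopping condition to kill the empirical contribution. The only cosmetic difference is that the paper applies Hoeffding to the $[0,D^2]$-valued weighted variables $\chi_{\mathcal{D}}(1-\tilde{w}_\scrS)\Vert\vect{g}-\vect{g}'\Vert_2^2$ with threshold $\varepsilon^2$, whereas you first bound $\Vert\vect{g}-\vect{g}'\Vert_2^2\leq D^2$ and apply Hoeffding to the $\{0,1\}$-valued indicators $U_i(\scrS)$ with threshold $\varepsilon^2/D^2$; both choices yield the identical tail $\exp(-2m\varepsilon^4/D^4)$ and hence the same sample complexity.
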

\begin{proof}
For simplicity, we will drop $\gamma,\varepsilon$ from the subscripts on our objectives since $\gamma$ and $\varepsilon$ remain fixed throughout the proof.
Let
\begin{equation}
\mathcal{D} = \left\lbrace (\xx, \xx')\in\calX\times\calX \ :\ \Vert\vect{g}(\xx) - \vect{g}(\xx') \Vert_2\geq\varepsilon \right\rbrace
\end{equation}
and
\begin{multline}
    \tilde{f}(\scrS) = \mathbb{E}\left[ \tilde{f}_m(\mathscr{S}) \right] \\
    = \int_{\calX\times\calX} \chi_{\mathcal{D}}(\xx,\xx') \tilde{w}_{\gamma,\xx,\xx'}(\scrS) \Vert\vect{g}(\xx) - \vect{g}(\xx') \Vert_2^2 \ d\mu(\xx)d\mu(\xx'),
\end{multline}
where $\chi_{\mathcal{D}}$ is the characteristic function of the set $\mathcal{D}$.
From our assumption that $\Vert \vect{m}_{\scrM}(\xx)-\vect{m}_{\scrM}(\xx)\Vert_2 \geq \gamma$ for $\mu$-almost every $\xx,\xx'\in\calX$ with $\Vert \vect{g}(\xx) - \vect{g}(\xx') \Vert_2 \geq \varepsilon$, it follows
\begin{equation}
    \tilde{f}(\scrM) = \int_{\calX\times\calX} \chi_{\mathcal{D}}(\xx,\xx') \Vert\vect{g}(\xx) - \vect{g}(\xx') \Vert_2^2 \ d\mu(\xx)d\mu(\xx').
\end{equation}
Expanding our definition of $F_{\gamma}$ in Eq.\textbf{\ref{eqn:continuous_mean_undetectable_differences}}, we find
\begin{multline}
    F_{\gamma}(\scrS) = \tilde{f}(\scrM) - \tilde{f}(\scrS) \\
    +\int_{\calX\times\calX} \chi_{\mathcal{D}^c}(\xx,\xx') \left[1-\tilde{w}_{\gamma,\xx,\xx'}(\scrS)\right] \Vert\vect{g}(\xx) - \vect{g}(\xx') \Vert_2^2 \ d\mu(\xx)d\mu(\xx')
\end{multline}
and therefore
\begin{equation}
    F_{\gamma}(\scrS) \leq \tilde{f}(\scrM) - \tilde{f}(\scrS) + \varepsilon^2.
    \label{thmeqn:E_bnd}
\end{equation}
We shall now use a similar Hoeffding and union bound argument as in Thm.~\ref{lem:uniform_bound_on_sampled_mean_fluction_objective} to relate $\tilde{f}(\scrM) - \tilde{f}(\scrS)$ to $\tilde{f}_m(\scrM) - \tilde{f}_m(\scrS)$ uniformly over every subset $\scrS\subseteq\scrM$.
Fixing such $\scrS \subset \scrM$, the one-sided Hoeffding inequality tells us that
\begin{equation}
    \mathbb{P}\left\lbrace \left[\tilde{f}(\scrM) - \tilde{f}(\scrS) \right] - \left[\tilde{f}_m(\scrM) - \tilde{f}_m(\scrS) \right] \geq \varepsilon^2 \right\rbrace \leq \exp{\left(- \frac{2 m \varepsilon^4}{D^4} \right)}.
\end{equation}
Unfixing $\scrS$ using the union bound tells us that 
\begin{equation}
    \tilde{f}(\scrM) - \tilde{f}(\scrS) < \tilde{f}_m(\scrM) - \tilde{f}_m(\scrS) + \varepsilon^2
\end{equation}
uniformly over all $\scrS \subset \scrM$ with probability at least $1-p$.
Since the greedy algorithm terminates when $\tilde{f}_m(\scrS_K) = \tilde{f}_m(\scrM)$, it follows by substitution into Eq.~\textbf{\ref{thmeqn:E_bnd}} that
\begin{equation}
    F_{\gamma}(\scrS) < 2\varepsilon^2
\end{equation}
with probability at least $1-p$ over the sample points.
\end{proof}

\subsection{Minimal Sensing to Meet Separation or Amplification Tolerances}
If we want to draw stronger conclusions about the underlying set $\calX$ than are captured by the mean square (un)detectable differences, then we must increase the number of sample points.
The following
Theorems~\ref{thm:minimal_sensing_sampled_error_tolerance}~and~\ref{thm:minimal_sensing_sampled_amplification_tolerance} show that similar conclusions about the separation of points as in Propositions~\ref{prop:error_tol_separation}~and~\ref{prop:amplification_tol_separation} can be achieved over large subsets of $\calX$ with high probability by considering secants between a randomly chosen set of ``base points'' and an the full data set.
More precisely, we will consider secants between an $\varepsilon_0$-net $\calX_N$ of $\calX$ and a collection of base point $\mathcal{B}_m\subset\calX$ with size $m$ independent of $N$.
This leads to linear $\mathcal{O}(N)$ scaling of the cost to evaluate the down-sampled versions of the objectives given by Eqs.~\textbf{\ref{eqn:max_sep_objective}}~and~\textbf{\ref{eqn:Lipschitz_constraint_function}} in Sections~\ref{subsec:error_tol}~and~\ref{subsec:amplification_tol} to achieve these relaxed guarantees.

The strong guarantee of Proposition~\ref{prop:error_tol_separation} requires that we use an objective like Eq.~\textbf{\ref{eqn:max_sep_objective}} in the submodular set-cover problem Eq.~\textbf{\ref{eqn:set_cover}} where the sum in Eq.~\textbf{\ref{eqn:max_sep_objective}} is taken over $\calX_N\times\calX_N$ and $\calX_N$ is an $\varepsilon_0$-net of the underlying set~$\calX$.
The problem is that the $\varepsilon_0$-net $\calX_N$ may be quite large and the number of operations needed to evaluate the sum in the objective scales with the square of the size of $\calX_N$.
Here we will prove that a similar guarantee as in Proposition~\ref{prop:error_tol_separation} holds with high probability over a large subset of $\calX$ when the sum in Eq.~\textbf{\ref{eqn:max_sep_objective}} is taken over secants between a randomly chosen collection of base points $\mathcal{B}_m = \lbrace \vect{b}_1,\ldots,\vect{b}_m\rbrace \subseteq\calX$ and the $\varepsilon_0$-net $\calX_N$.
Most importantly, the number of base points depends on the quality of the guarantee and not on size of the $\varepsilon_0$-net, so that the computational cost can be reduced to linear dependence on the size of $\calX_N$.

Specifically, in place of Eq.~\textbf{\ref{eqn:max_sep_objective}}, we can consider the sampled objective
\begin{equation}
    f_{\gamma,\varepsilon,m}(\scrS) = \frac{1}{m N}\sum_{\substack{1\leq i \leq m, \ 1\leq j\leq N :\\
    \Vert \vect{g}(\vect{b}_i) - \vect{g}(\xx_j)\Vert_2 \geq \varepsilon}}
    w_{\gamma,\vect{b}_i,\xx_j}(\scrS) \Vert \vect{g}(\vect{b}_i) - \vect{g}(\xx_j)\Vert_2^2
    \label{eqn:sampled_net_error_tolerance_objective}
\end{equation}
with $w_{\gamma,\vect{b}_i,\xx_j}(\scrS)$ defined by Eq.~\textbf{\ref{eqn:relaxed_weight_function}} in the optimization problem Eq.~\textbf{\ref{eqn:set_cover}}.
The greedy approximation algorithm produces a set of sensors $\scrS_K$ such that
\begin{equation}
    \Vert \vect{g}(\vect{b}_i) - \vect{g}(\xx_j) \Vert_2 \geq \varepsilon
    \quad \Rightarrow \quad
    \Vert \vect{m}_{\scrS_K}(\vect{b}_i) - \vect{m}_{\scrS_K}(\xx_j) \Vert_2 \geq \gamma
    \label{eqn:sampled_net_separation_condition}
\end{equation}
for every $\vect{b}_i\in\mathcal{B}_m$ and $\xx_j\in\calX_N$.
Theorem~\ref{thm:minimal_sensing_sampled_error_tolerance} guarantees that with high probability, only a small subset of points in $\calX$ have target values that cannot be distinguished from the rest by measurements separated by a relaxed detection threshold.
This size of this ``bad set'' is determined by its $\mu$-measure, which can be made arbitrarily small with high probability by taking more sample base points $m$.

\begin{theorem}[Sampled Separation Guarantee]
\label{thm:minimal_sensing_sampled_error_tolerance}
Let $\calX_N$ be an $\varepsilon_0$-net of $\calX$ and let the base points $\mathcal{B}_m$ be sampled independently according to a probability measure $\mu$ on $\calX$ with 
\begin{equation}
    m \geq \frac{1}{2\delta^2}\left( \#(\scrM) \ln{2} - \ln{p} \right),
\end{equation}
where $p,\delta \in (0, 1)$.
Consider the objective $f_{\gamma,\varepsilon,m}$ given by Eq.~\textbf{\ref{eqn:sampled_net_error_tolerance_objective}} for a certain choice of $\gamma > 0$ and $\varepsilon > 0$ for which every $\vect{b}_i\in\mathcal{B}_m$ and $\xx_j\in\calX_N$ satisfies
\begin{equation}
    \Vert \vect{g}(\vect{b}_i) - \vect{g}(\xx_j) \Vert_2 \geq \varepsilon
    \quad \Rightarrow \quad
    \Vert \vect{m}_{\scrM}(\vect{b}_i) - \vect{m}_{\scrM}(\xx_j) \Vert_2 \geq \gamma.
\end{equation}
Suppose also that $\vect{g}$ and the measurement functions $\vect{m}_k$, $k\in\scrM$ are all Lipschitz over $\calX$.
If $f_{\gamma,\varepsilon,m}(\scrS) = f_{\gamma,\varepsilon,m}(\scrM)$, then the $\mu$ measure of points $\xx\in\calX$ such that
\begin{multline}
    \Vert \vect{g}(\xx) - \vect{g}(\xx')\Vert_2 \geq \varepsilon + \varepsilon_0\Vert\vect{g}\Vert_{\text{lip}} \\
    \Rightarrow \quad
    \Vert \mS(\xx) - \mS(\xx')\Vert_2 > \gamma - \varepsilon_0\Vert \mS\Vert_{\text{lip}}
\end{multline}
for every $\xx'\in\calX$ is at least $1-\delta$ with probability at least $1-p$.
\end{theorem}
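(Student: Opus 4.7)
The plan is to combine a deterministic $\varepsilon_0$-net argument in the spirit of Proposition~\ref{prop:error_tol_separation} with a probabilistic sampling argument in the spirit of Theorem~\ref{thm:mean_square_undetectable_difference}. The first step is to translate the equality $f_{\gamma,\varepsilon,m}(\scrS) = f_{\gamma,\varepsilon,m}(\scrM)$ into the pointwise implication
\[
\vect{b}_i\in\mathcal{B}_m,\ \xx_j\in\calX_N,\ \Vert\vect{g}(\vect{b}_i)-\vect{g}(\xx_j)\Vert_2\geq\varepsilon \quad\Rightarrow\quad \Vert\mS(\vect{b}_i)-\mS(\xx_j)\Vert_2\geq\gamma.
\]
This follows because $\scrS\subseteq\scrM$ gives the term-by-term monotonicity $w_{\gamma,\vect{b}_i,\xx_j}(\scrS)\leq w_{\gamma,\vect{b}_i,\xx_j}(\scrM)$, and the theorem's standing hypothesis forces every full-set weight on the indexed pairs to equal one. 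Equality of the two sums therefore forces equality term by term, which is the claimed implication.

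Next, I would define the ``bad set'' of candidate counter-witnesses to the theorem's conclusion,
\[
B(\scrS) = \left\{\xx\in\calX : \exists\, \xx'\in\calX\ \text{with}\ \Vert\vect{g}(\xx)-\vect{g}(\xx')\Vert_2 \geq \varepsilon + \varepsilon_0\Vert\vect{g}\Vert_{\text{lip}}\ \text{and}\ \Vert\mS(\xx)-\mS(\xx')\Vert_2 \leq \gamma - \varepsilon_0\Vert\mS\Vert_{\text{lip}}\right\},
\]
and show that $B(\scrS)\cap\mathcal{B}_m = \emptyset$. Given a hypothetical $\vect{b}_i\in\mathcal{B}_m\cap B(\scrS)$ with counter-witness $\xx'\in\calX$, I would pick $\xx_j\in\calX_N$ with $\Vert\xx'-\xx_j\Vert_2 < \varepsilon_0$ (which exists because $\calX_N$ is an $\varepsilon_0$-net) and apply two triangle inequalities using the Lipschitz bounds on $\vect{g}$ and $\mS$. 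These produce $\Vert\vect{g}(\vect{b}_i)-\vect{g}(\xx_j)\Vert_2 > \varepsilon$ together with $\Vert\mS(\vect{b}_i)-\mS(\xx_j)\Vert_2 < \gamma$, contradicting the pointwise implication from the previous step.

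The final step is to promote this empirical emptiness into a bound on $\mu(B(\scrS))$. For any \emph{fixed} $\scrS\subseteq\scrM$, the indicators $\bbone\{\vect{b}_i\in B(\scrS)\}$ are i.i.d.\ Bernoullis with mean $\mu(B(\scrS))$, so a one-sided Hoeffding inequality yields
\[
\mathbb{P}\left\{\mu(B(\scrS)) > \delta\ \text{and}\ \mathcal{B}_m\cap B(\scrS)=\emptyset\right\} \leq e^{-2m\delta^2}.
\]
Union-bounding over the $2^{\#(\scrM)}$ subsets of $\scrM$ and using the sample-size hypothesis $m \geq (\#(\scrM)\ln 2 - \ln p)/(2\delta^2)$ gives a joint failure probability of at most $p$. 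Combining with the empirical emptiness from the previous step yields $\mu(B(\scrS))\leq\delta$ with probability at least $1-p$, which is exactly the claimed lower bound of $1-\delta$ on the measure of the ``good'' set. The main subtlety to navigate is that the selected $\scrS$ is itself data-dependent, so Hoeffding cannot be applied to that single $\scrS$ in isolation; the uniform union bound over \emph{all} $2^{\#(\scrM)}$ candidate subsets is what the $\#(\scrM)\ln 2$ factor in the sample-size requirement is specifically designed to absorb.
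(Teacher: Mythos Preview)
Your proposal is correct and follows essentially the same approach as the paper: translate $f_{\gamma,\varepsilon,m}(\scrS)=f_{\gamma,\varepsilon,m}(\scrM)$ into the pointwise separation implication over $\mathcal{B}_m\times\calX_N$, use the Lipschitz/triangle-inequality argument to pass from $\calX_N$ to $\calX$, and then apply one-sided Hoeffding together with a union bound over all $2^{\#(\scrM)}$ subsets. The only organizational difference is that the paper defines Bernoulli indicators $Z_{\scrS}(\vect{b}_i)$ of separation failure against $\calX_N$ and shows $\mathbb{E}[Z_{\scrS}]$ upper-bounds $\mu(B(\scrS))$, whereas you work with $B(\scrS)$ directly and show $\mathcal{B}_m\cap B(\scrS)=\emptyset$; these are contrapositives of the same argument.
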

\begin{proof}
For simplicity, we will drop $\gamma,\varepsilon$ from the subscript on our objective since $\gamma$ and $\varepsilon$ remain fixed throughout the proof.
Let us begin by fixing a set $\scrS\subseteq\scrM$ and define the random variables
\begin{equation}
    Z_{\scrS}(\vect{b}_i) = \max_{\xx\in\calX_N} \bbone \big\lbrace \Vert \mS(\vect{b}_i) - \mS(\xx)\Vert_2 < \gamma \quad \mbox{and} \quad
    \Vert \vect{g}(\vect{b}_i) - \vect{g}(\xx) \Vert_2 \geq \varepsilon \big\rbrace.
\end{equation}
If $Z_{\scrS}(\vect{b}_i) = 0$ then every $\xx\in\calX_N$ with $\Vert \vect{g}(\vect{b}_i) - \vect{g}(\xx) \Vert_2 \geq \varepsilon$ also satisfies $\Vert \mS(\vect{b}_i) - \mS(\xx)\Vert_2 \geq \gamma$, otherwise $Z_{\scrS}(\vect{b}_i) = 1$.
We observe that $Z_{\scrS}(\vect{b}_i)$, $i=1,\ldots,m$ are independent, identically distributed Bernoulli random variables whose expectation
\begin{multline}
    \mathbb{E}\left[ Z_{\scrS}(\vect{b}_i) \right] = \mu\big(\big\lbrace \xx\in\calX\ :\ \exists\xx'\in\calX_N \quad \mbox{s.t.} \\
    \Vert \mS(\xx) - \mS(\xx')\Vert_2 < \gamma, \quad \Vert \vect{g}(\xx) - \vect{g}(\xx') \Vert_2 \geq \varepsilon \big\rbrace\big)
\end{multline}
is the $\mu$-measure of points in $\calX$ for which target values differing by at least $\varepsilon$ with points of $\calX_N$ are separated by measurements differing by less than $\gamma$.
Suppose that for a fixed $\xx\in\calX$ we have
\begin{equation}
    \Vert \vect{g}(\xx) - \vect{g}(\xx_j) \Vert_2 \geq \varepsilon 
    \quad \Rightarrow \quad
    \Vert \mS(\xx) - \mS(\xx_j)\Vert_2 \geq \gamma
    \label{eqn:thm_sep_assumption}
\end{equation}
for every $\xx_j\in\calX_N$.
For any $\xx'\in\calX$, there is an $\xx_j\in\calX_N$ with $\Vert \xx' - \xx_j\Vert_2 < \varepsilon_0$ and so we have
\begin{equation}
\begin{split}
    \varepsilon + \varepsilon_0 \Vert \vect{g}\Vert_{\text{lip}} &\leq \Vert \vect{g}(\xx) - \vect{g}(\xx') \Vert_2 \\
    &\leq \Vert \vect{g}(\xx) - \vect{g}(\xx_j) \Vert_2 + \Vert \vect{g}(\xx_j) - \vect{g}(\xx') \Vert_2 \\
    &< \Vert\vect{g}(\xx) - \vect{g}(\xx_j) \Vert_2 + \varepsilon_0 \Vert \vect{g}\Vert_{\text{lip}}.
\end{split}
\end{equation}
Hence, $\varepsilon \leq \Vert \vect{g}(\xx) - \vect{g}(\xx_j) \Vert_2$, which implies that $\gamma \leq \Vert \mS(\xx) - \mS(\xx_j) \Vert_2$ by assumption.
From this we obtain
\begin{equation}
\begin{split}
    \gamma &\leq \Vert \mS(\xx) - \mS(\xx') \Vert_2 + \Vert \mS(\xx') - \mS(\xx_j) \Vert_2 \\
    &< \Vert \mS(\xx) - \mS(\xx') \Vert_2 + \varepsilon_0 \Vert \mS\Vert_{\text{lip}}.
\end{split}
\end{equation}
Therefore, for such an $\xx\in\calX$ we have
\begin{multline}
    \forall \xx'\in\calX \qquad \Vert \vect{g}(\xx) - \vect{g}(\xx') \Vert_2 \geq \varepsilon + \varepsilon_0 \Vert \vect{g}\Vert_{\text{lip}} \\
    \Rightarrow \quad
    \Vert \mS(\xx) - \mS(\xx') \Vert_2 > \gamma - \varepsilon_0 \Vert \mS\Vert_{\text{lip}}.
\end{multline}
It follows that $\mathbb{E}\left[ Z_{\scrS}(\vect{b}_i) \right]$ is an upper bound on the $\mu$-measure of points in $\calX$ for which there is another point in $\calX$ with a close measurement and distant target value, that is
\begin{multline}
    \mathbb{E}\left[ Z_{\scrS}(\vect{b}_i) \right] \geq \mu\big(\big\lbrace \xx\in\calX\ :\ \exists\xx'\in\calX\quad \mbox{s.t.} \\
    \Vert \mS(\xx) - \mS(\xx')\Vert_2 \leq \gamma - \varepsilon_0 \Vert \mS\Vert_{\text{lip}}, \\
    \Vert \vect{g}(\xx) - \vect{g}(\xx') \Vert_2 \geq \varepsilon + \varepsilon_0 \Vert \vect{g}\Vert_{\text{lip}} \big\rbrace\big).
    \label{eqn:thm_measure_of_points_with_large_error}
\end{multline}

By assumption, we have a set $\scrS \subset \scrM$ so that $Z_{\scrS}(\vect{b}_i) = 0$ for each $i=1,\ldots,m$.
And so it remains to bound the difference between the empirical and true expectation of $Z_{\scrS}(\vect{b}_i)$ uniformly over every subset $\scrS \subset \scrM$.
For fixed $\scrS$, the one-sided Hoeffding inequality gives
\begin{equation}
    \mathbb{P}\Big\lbrace \frac{1}{m}\sum_{i=1}^m\left( \mathbb{E}[Z_{\scrS}(\vect{b}_i)] - Z_{\scrS}(\vect{b}_i) \right) \geq \delta \Big\rbrace \leq e^{-2 m \delta^2}.
\end{equation}
Unfixing $\scrS$ via the union bound over all $\scrS \subset \scrM$ and applying our assumption about the number of base points $m$ yields
\begin{multline}
    \mathbb{P}\bigcup_{\scrS\subseteq\scrM}\Big\lbrace \frac{1}{m}\sum_{i=1}^m\left( \mathbb{E}[Z_{\scrS}(\vect{b}_i)] - Z_{\scrS}(\vect{b}_i) \right) \geq \delta \Big\rbrace
    \leq \exp{\left[ \#(\scrM) \ln{2} - 2 m \delta^2\right]} \\
    \leq p.
\end{multline}
Since our assumed choice of $\scrS$ has $f_m(\scrS) = f_m(\scrM)$ it follows that all $Z_{\scrS}(\vect{b}_i) = 0$, $i=1,\ldots,m$, hence we have
\begin{equation}
    \mathbb{E}[Z_{\scrS}(\vect{b}_i)] < \delta
\end{equation}
with probability at least $1-p$.
Combining this with Eq.~\textbf{\ref{eqn:thm_measure_of_points_with_large_error}} completes the proof.
\end{proof}

It is also possible to use a down-sampled objective to greedily choose sensors that satisfy a similarly relaxed version of the amplification guarantee given by
Proposition~\ref{prop:amplification_tol_separation} with high probability over a large subset of $\calX$.
In order to do this, we take the sum in Eq.~\textbf{\ref{eqn:Lipschitz_constraint_function}} over secants between a randomly chosen collection of base points $\mathcal{B}_m = \lbrace \vect{b}_1,\ldots,\vect{b}_m\rbrace \subseteq\calX$ and the $\varepsilon_0$-net $\calX_N$.
Again, the number of base points depends on the quality of the guarantee and not on size of the $\varepsilon_0$-net, so that the computational cost can be reduced to linear dependence on the size of $\calX_N$.

Specifically, in place of Eq.~\textbf{\ref{eqn:Lipschitz_constraint_function}}, we consider
\begin{equation}
    f_{L,m}(\scrS) = \sum_{\substack{1\leq i \leq m,\ 1\leq j\leq N, \\ \vect{g}(\vect{b}_i) \neq \vect{g}(\xx_j)}}
    \min\left\lbrace \frac{\Vert \mS(\vect{b}_i) - \mS(\xx_j) \Vert_2^2 }
    {\Vert \vect{g}(\vect{b}_i) - \vect{g}(\xx_j) \Vert_2^2},\ 
    \frac{1}{L^2} \right\rbrace.
    \label{eqn:sampled_net_Lipschitz_objective}
\end{equation}
In Theorem~\ref{thm:minimal_sensing_sampled_amplification_tolerance} we show that when a sufficiently small set of sensors $\scrS$ is found, e.g., using the greedy algorithm with the sampled objective $f_{L,m}$, that satisfies the amplification tolerance over $\mathcal{B}_m\times\calX_N$, we can conclude that that a slightly relaxed amplification bound holds with high probability over a large subset of $\calX$.
In particular, the subset of ``bad points'' in $\xx\in\calX$ for which there is another point $\xx'\in\calX$ with a different target value, but not a sufficiently different measured value, has small $\mu$-measure with high probability.

\begin{theorem}[Sampled Amplification Guarantee]
\label{thm:minimal_sensing_sampled_amplification_tolerance}
Let $\calX_N$ be an $\varepsilon_0$-net of $\calX$ and let the base points $\mathcal{B}_m$ be sampled independently according to a probability measure $\mu$ on $\calX$ with
\begin{equation}
    m \geq \frac{1}{2\delta^2}\left( \#(\scrM)\ln{2} - \ln{p} \right).
\end{equation}
Consider the objective $f_m$ given by Eq.~\textbf{\ref{eqn:sampled_net_Lipschitz_objective}} for a certain choice of $L>0$ for which
\begin{equation}
    \Vert \vect{g}(\vect{b}_i) - \vect{g}(\xx_j) \Vert_2 \leq L \Vert \vect{m}_{\scrM}(\vect{b}_i) - \vect{m}_{\scrM}(\xx_j) \Vert_2
\end{equation}
is achieved for all $\vect{b}_i\in\mathcal{B}_m$, $\xx_j\in\calX_N$.
Suppose also that $\vect{g}$ and the measurement functions $\vect{m}_k$, $k\in\scrM$ are all Lipschitz functions over $\calX$.
If $f_{L,m}(\scrS) = f_{L,m}(\scrM)$, then the $\mu$-measure of points $\xx\in\calX$ such that
\begin{equation}
    \Vert \vect{g}(\xx) - \vect{g}(\xx') \Vert_2 <
    L \Vert \mS(\xx) + \mS(\xx') \Vert_2
    + \left( \Vert \vect{g}\Vert_{\text{lip}} + L \Vert \mS \Vert_{\text{lip}} \right)\varepsilon_0
\end{equation}
for every $\xx'\in\calX$ is at least $1-\delta$ with probability at least $1-p$.
\end{theorem}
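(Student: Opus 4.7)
The plan is to mirror the proof of Theorem~\ref{thm:minimal_sensing_sampled_error_tolerance} almost verbatim, with the amplification condition taking the role of the separation condition. First I would fix a candidate subset $\scrS \subseteq \scrM$ and introduce the Bernoulli random variables
\begin{equation}
Z_{\scrS}(\vect{b}_i) = \max_{\xx\in\calX_N} \bbone\bigl\lbrace \vect{g}(\vect{b}_i)\neq\vect{g}(\xx)\ \text{and}\ \Vert \vect{g}(\vect{b}_i)-\vect{g}(\xx)\Vert_2 > L\Vert \mS(\vect{b}_i)-\mS(\xx)\Vert_2 \bigr\rbrace,
\end{equation}
so that $Z_{\scrS}(\vect{b}_i) = 0$ precisely when the amplification tolerance between $\vect{b}_i$ and every point of the net $\calX_N$ is respected under the sensors $\scrS$. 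These variables are i.i.d.\ under the sampling of $\mathcal{B}_m \sim \mu$, and from the structure of $f_{L,m}$ (each secant contributing exactly $1/L^2$ iff the amplification bound is met), the hypothesis $f_{L,m}(\scrS)=f_{L,m}(\scrM)$ forces $Z_{\scrS}(\vect{b}_i)=0$ for every $i=1,\ldots,m$.

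Next I would perform the deterministic ``net-to-continuum'' lifting step, which is the core geometric argument. Fix an $\xx\in\calX$ for which $Z_{\scrS}$ evaluated at $\xx$ (thought of as a single base point) is zero, i.e.\ the amplification bound holds against every $\xx_j\in\calX_N$. For an arbitrary $\xx'\in\calX$, pick $\xx_j\in\calX_N$ within $\varepsilon_0$ of $\xx'$ and estimate
\begin{equation}
\Vert \vect{g}(\xx)-\vect{g}(\xx')\Vert_2 \leq \Vert \vect{g}(\xx)-\vect{g}(\xx_j)\Vert_2 + \Vert \vect{g}\Vert_{\text{lip}}\varepsilon_0 \leq L\Vert \mS(\xx)-\mS(\xx_j)\Vert_2 + \Vert \vect{g}\Vert_{\text{lip}}\varepsilon_0,
\end{equation}
and then a second triangle inequality together with the Lipschitz bound on $\mS$ replaces $\mS(\xx_j)$ by $\mS(\xx')$ at the cost of an additional $L\Vert\mS\Vert_{\text{lip}}\varepsilon_0$. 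This yields exactly the relaxed amplification bound claimed in the statement. Consequently $\mathbb{E}[Z_{\scrS}(\vect{b}_i)]$ upper-bounds the $\mu$-measure of the set of ``bad'' $\xx\in\calX$ that fail the relaxed amplification inequality against some $\xx'\in\calX$.

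Finally I would execute the probabilistic step. For fixed $\scrS$, the one-sided Hoeffding inequality on the $[0,1]$-bounded i.i.d.\ variables $Z_{\scrS}(\vect{b}_i)$ gives
\begin{equation}
\mathbb{P}\bigl\lbrace \mathbb{E}[Z_{\scrS}(\vect{b}_1)] - \tfrac{1}{m}\sum_{i=1}^m Z_{\scrS}(\vect{b}_i) \geq \delta \bigr\rbrace \leq e^{-2 m\delta^2},
\end{equation}
and a union bound over all $2^{\#(\scrM)}$ subsets $\scrS\subseteq\scrM$ combined with the stated lower bound on $m$ drives the failure probability below $p$. Since on the complement of the failure event we have $\sum_i Z_{\scrS}(\vect{b}_i)=0$, the expectation is below $\delta$, and the geometric lifting step turns this into the desired $\mu$-measure bound on the bad set.

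The main obstacle I anticipate is purely a bookkeeping issue: confirming that the two triangle-inequality applications reproduce the constants $\Vert \vect{g}\Vert_{\text{lip}} + L\Vert\mS\Vert_{\text{lip}}$ multiplying $\varepsilon_0$ exactly as stated (rather than with a factor of $2$, as appeared in Proposition~\ref{prop:amplification_tol_separation}), and handling the degenerate case $\vect{g}(\vect{b}_i)=\vect{g}(\xx)$ that is excluded from the sum defining $f_{L,m}$ without creating a gap in the indicator-based argument. Both are routine once the event structure above is in place; the probabilistic portion is a direct copy of the argument in Theorem~\ref{thm:minimal_sensing_sampled_error_tolerance}.
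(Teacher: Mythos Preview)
Your proposal is correct and follows essentially the same route as the paper's own proof: define the Bernoulli indicators $Z_{\scrS}(\vect{b}_i)$, perform the net-to-continuum lifting via two triangle inequalities (yielding exactly the single factor of $\varepsilon_0$ you anticipated), bound $\mathbb{E}[Z_{\scrS}]$ by one-sided Hoeffding, and union-bound over all $2^{\#(\scrM)}$ subsets. The degenerate case $\vect{g}(\vect{b}_i)=\vect{g}(\xx)$ is indeed harmless since the strict inequality in the indicator is automatically false there, so your extra clause is redundant but causes no gap.
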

\begin{proof}
The proof is analogous to Theorem~\ref{thm:minimal_sensing_sampled_error_tolerance} and so we relegate it to Appendix~\ref{app:proofs}.
\end{proof}

\section{Working with Noisy Data}
\label{sec:noisy_data}
So far, we have considered maximizing different measures of robust reconstructability given a collection of noiseless data.
That is, the resulting sensors are selected in order to be noise robust, but we have assumed that the measurements $\vect{m}_j(\vect{x}_i)$, $j\in\scrM$ and target variables $\vect{g}(\vect{x}_i)$ used during the sensor selection process are noiseless over the sampled states $\vect{x}_i\in\calX_N$.
In many applications, however, our data may contain noisy measurements, target variables, or both.
In this section, we study the effect of noisy data on the performance of our proposed secant-based greedy algorithms.
By ``noise'' we mean specifically that we are given a collection of available measurements $\left\lbrace \vect{\tilde{m}}_{i,\scrM} = \vect{m}_{\scrM}(\vect{x}_i) + \vect{u}_{i,\scrM} \right\rbrace_{i=1}^N$ that are corrupted by unknown noise $\vect{u}_{i,\scrM}$ together with the corresponding target values $\left\lbrace \vect{\tilde{g}}_i = \vect{g}(\vect{x}_i) + \vect{v}_i \right\rbrace_{i=1}^N$ that are also corrupted by unknown noise $\vect{v}_i$.
That is, we do not have access to the measurement functions $\vect{m}_{\scrM}$ or the target function $\vect{g}$ and must rely solely on noisy data generated by them.

First, we mention that the minimal sensing method to meet an error tolerance discussed in Section~\ref{subsec:error_tol} is robust to bounded noise in the measurements and target variables.
In particular, since the selected sensors $\scrS$ using the approach described in Section~\ref{subsec:error_tol} automatically satisfy Eq.~\textbf{\ref{eqn:noisy_separation_condition}}, Proposition~\ref{prop:noisy_separation_guarantee}, below, shows that the true measurements coming from states with sufficiently distant true target values must also be separated by the measurements.
\begin{proposition}[Noisy Separation Guarantee]
\label{prop:noisy_separation_guarantee}
Let $\calX_N$ be an $\varepsilon_0$-net of $\calX$ (see Definition~\ref{def:epsilon_net})
and let $\vect{v}_i\in \mathbb{R}^{\dim\vect{g}}$, $\vect{u}_{i,\scrS}\in\mathbb{R}^{d_{\scrS}}$, $i=1,\ldots,N$ be bounded vectors with
\begin{equation}
    \forall i=1,\ldots,N \qquad \left\Vert \vect{u}_{i,\scrS}\right\Vert_2 \leq \delta_{u}, \qquad \left\Vert \vect{v}_i\right\Vert_2 \leq \delta_{v}.
\end{equation}
Suppose that there exists $\epsilon > 0$ and $\gamma > 0$ such that
\begin{multline}
    \forall \xx_i,\xx_j\in\calX_N \qquad
    \Vert \left(\vect{g}(\xx_i) + \vect{v}_i\right) - \left(\vect{g}(\xx_j) + \vect{v}_j\right) \Vert_2 \geq \varepsilon \\
    \Rightarrow \quad \Vert \left(\mS(\xx_i) + \vect{u}_{i,\scrS}\right) - \left(\mS(\xx_j) + \vect{u}_{j,\scrS}\right) \Vert_2 \geq \gamma.
    \label{eqn:noisy_separation_condition}
\end{multline}
If $\mS$ and $\vect{g}$ are Lipschitz functions with Lipschitz constants $\Vert \mS\Vert_{\text{lip}}$ and $\Vert \vect{g}\Vert_{\text{lip}}$ respectively, then
\begin{multline}
    \forall \xx,\xx'\in\calX \qquad
    \Vert \vect{g}(\xx) - \vect{g}(\xx') \Vert_2 \geq \varepsilon + 2\delta_{v} + 2\varepsilon_0 \Vert \vect{g}\Vert_{\text{lip}} \\
    \quad \Rightarrow \quad 
    \Vert \mS(\xx) - \mS(\xx') \Vert_2 > \gamma - 2\delta_{u} - 2\varepsilon_0 \Vert \mS \Vert_{\text{lip}}.
\end{multline}
\end{proposition}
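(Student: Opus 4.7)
The plan is to follow exactly the triangle-inequality template used in Proposition~\ref{prop:error_tol_separation}, with two extra noise layers inserted: one between the true and noisy target values, and one between the true and noisy measurements. Because the hypothesis in Eq.~\textbf{\ref{eqn:noisy_separation_condition}} is phrased on the noisy data over the discrete net $\calX_N$, while the conclusion must be phrased on the noiseless functions over the full set $\calX$, each side of the implication will pick up two corrections: one of size $\varepsilon_0\Vert\vect{g}\Vert_{\text{lip}}$ or $\varepsilon_0\Vert\mS\Vert_{\text{lip}}$ to discretize, and one of size $\delta_v$ or $\delta_u$ to strip off the noise.

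The forward pass would proceed as follows. Fix arbitrary $\xx,\xx'\in\calX$ with $\Vert \vect{g}(\xx) - \vect{g}(\xx')\Vert_2 \geq \varepsilon + 2\delta_v + 2\varepsilon_0\Vert\vect{g}\Vert_{\text{lip}}$. By the $\varepsilon_0$-net property, pick $\xx_i,\xx_j\in\calX_N$ with $\Vert \xx - \xx_i\Vert_2 < \varepsilon_0$ and $\Vert \xx' - \xx_j\Vert_2 < \varepsilon_0$. The triangle inequality combined with Lipschitz continuity of $\vect{g}$ gives
\begin{equation}
    \Vert \vect{g}(\xx_i) - \vect{g}(\xx_j)\Vert_2 \;>\; \Vert\vect{g}(\xx) - \vect{g}(\xx')\Vert_2 - 2\varepsilon_0\Vert\vect{g}\Vert_{\text{lip}} \;\geq\; \varepsilon + 2\delta_v.
\end{equation}
Adding the noise vectors $\vect{v}_i,\vect{v}_j$ (each of norm at most $\delta_v$) and applying the reverse triangle inequality then yields
\begin{equation}
    \Vert (\vect{g}(\xx_i)+\vect{v}_i) - (\vect{g}(\xx_j)+\vect{v}_j)\Vert_2 \;>\; \varepsilon,
\end{equation}
which is precisely the hypothesis that triggers Eq.~\textbf{\ref{eqn:noisy_separation_condition}}.

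Now the assumed noisy separation condition yields $\Vert(\mS(\xx_i)+\vect{u}_{i,\scrS}) - (\mS(\xx_j)+\vect{u}_{j,\scrS})\Vert_2 \geq \gamma$. Strip off the noise by the reverse triangle inequality to get $\Vert \mS(\xx_i) - \mS(\xx_j)\Vert_2 \geq \gamma - 2\delta_u$, and finally use the Lipschitz bound on $\mS$ together with the $\varepsilon_0$-net distances $\Vert \xx - \xx_i\Vert_2 < \varepsilon_0$, $\Vert \xx' - \xx_j\Vert_2 < \varepsilon_0$ to conclude
\begin{equation}
    \Vert \mS(\xx) - \mS(\xx')\Vert_2 \;>\; \Vert \mS(\xx_i) - \mS(\xx_j)\Vert_2 - 2\varepsilon_0\Vert \mS\Vert_{\text{lip}} \;\geq\; \gamma - 2\delta_u - 2\varepsilon_0\Vert \mS\Vert_{\text{lip}}.
\end{equation}
There is no real obstacle: the proof is essentially a four-fold triangle inequality, with the only care needed being the bookkeeping of which inequalities are strict (those arising from the open $\varepsilon_0$-net condition) versus non-strict, so that the final conclusion carries the strict $>$ on the measurement side that the statement advertises. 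Like Proposition~\ref{prop:error_tol_separation}, this proof is purely mechanical and can be relegated to Appendix~\ref{app:proofs}.
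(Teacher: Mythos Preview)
Your proof is correct and follows essentially the same triangle-inequality approach as the paper. The only cosmetic difference is that the paper first strips the noise to establish the noiseless implication $\Vert \vect{g}(\xx_i) - \vect{g}(\xx_j)\Vert_2 \geq \varepsilon + 2\delta_v \Rightarrow \Vert \mS(\xx_i) - \mS(\xx_j)\Vert_2 \geq \gamma - 2\delta_u$ on $\calX_N$, and then invokes Proposition~\ref{prop:error_tol_separation} as a black box to pass to $\calX$, whereas you unroll that reference and do all four triangle-inequality steps in a single direct pass.
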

\begin{proof}
The proof is analogous to Proposition~\ref{prop:error_tol_separation} and has been relegated to Appendix~\ref{app:proofs}.
\end{proof}
As a consequence of Proposition~\ref{prop:noisy_separation_guarantee}, the reconstruction error for the desired quantities using these sensors can still be bounded if the thresholds $\varepsilon$ and $\gamma$ exceed twice the noise level of the target variable and measurements respectively (with a little extra padding based on the sampling fineness).

On the other hand, the minimal sensing method to meet an amplification tolerance discussed in Section~\ref{subsec:amplification_tol} is very sensitive to noisy data.
This is because measurement noise can bring two nearby measurements $\mS(\vect{x})$ and $\mS(\vect{x}')$ arbitrarily close together while the corresponding target variables $\vect{g}(\vect{x})$ and $\vect{g}(\vect{x}')$ remain separated.
Such terms can result in arbitrarily large data-driven estimates of the reconstruction Lipschitz constant.
Consequently it may not be possible to find a small set of sensors $\scrS$ such that
\begin{equation}
    \max_{1\leq i<j\leq N} \frac{\left\Vert \vect{\tilde{g}}_i - \vect{\tilde{g}}_j \right\Vert_2}{\left\Vert \vect{\tilde{m}}_{i,\scrS} - \vect{\tilde{m}}_{j,\scrS} \right\Vert_2} \leq L
\label{eqn:noisy_Lip_condition}
\end{equation}
for acceptable values of $L$.

One way to deal with this problem is to smooth out the target variables.
For instance, given the available noisy measurement and target pairs $\left\lbrace\left( \vect{\tilde{m}}_{i,\scrM},\ \vect{\tilde{g}}_i \right) \right\rbrace_{i=1}^N$, one can find an approximation of the reconstruction function $\boldsymbol{\Phi}_{\scrM}$ via regression.
Using the predicted target variables
\begin{equation}
    \vect{\hat{g}}_i := \boldsymbol{\Phi}_{\scrM}\left(\vect{\tilde{m}}_{i,\scrM}\right)
\end{equation}
in place of the noisy data $\vect{\tilde{g}}_i$ fixes the problem of infinite Lipschitz constants.
This is because the amplification-based approach using these data seeks to find the minimal set of sensors $\scrS$ such that
\begin{equation}
    \max_{1\leq i<j\leq N} \frac{\left\Vert \vect{\hat{g}}_i - \vect{\hat{g}}_j \right\Vert_2}{\left\Vert \vect{\tilde{m}}_{i,\scrS} - \vect{\tilde{m}}_{j,\scrS} \right\Vert_2} \leq L
\label{eqn:smoothed_Lip_condition}
\end{equation}
rather than satisfying Eq.~\textbf{\ref{eqn:noisy_Lip_condition}}.

We use a similar type of smoothing approach for the shock-mixing layer problem by choosing the leading two Isomap coordinates $\vect{g}(\vect{x}) = \left( \phi_1(\vect{x}),\ \phi_2(\vect{x}) \right)$ rather than simply taking $\vect{g}(\vect{x}) = \vect{x}$.
This is because the full state $\vect{x}$ contains some small noise, meaning that it does not lie exactly on the one-dimensional loop in state space, but rather on a very thin manifold with full dimensionality.
If we were to use the Lipschitz-based approach to reconstruct $\vect{x}$ directly, we would need enough sensors to reconstruct this noise.
By seeking to reconstruct the leading Isomap coordinates instead, we have regularized our selection algorithm to choose only those sensors that are needed to reconstruct the dominant periodic behavior.

Reconstructing smoothed target variables turns out to be a robust method for sensor placement, as we show by introducing increasing levels of noise in the shock-mixing layer problem.
We added independent Gaussian noise with standard deviations $\sigma_{\text{noise}} = 0.01$, $0.02$, $0.03$, $0.04$, and $0.05$ to each velocity component at every location on the computational grid, yielding noisy snapshots like the one shown in Figure~\ref{fig:sml_flowfields_noise}.
This reflects the typical situation when the underlying data given to us are noisy.
At each noise level we selected three sensors using the detectable difference-based method of Section~\ref{subsec:MSF} as well as the amplification tolerance-based method of Section~\ref{subsec:amplification_tol}, with a bisection search over the threshold Lipschitz constant~$L$, to reconstruct the leading two Isomap coordinates of the noisy data.
Despite the noise, the leading two Isomap coordinates continued to accurately capture the dominant periodic behavior of the underlying system, making them good reconstruction target variables.
The thresholds for the detectable difference method were fixed at $\gamma = 0.04$ except in the $\sigma_{\text{noise}}=0.02$ case, where better performance was achieved using $\gamma = 0.02$.
\begin{figure*}
    \centering
    \subfloat{
    \begin{tikzonimage}[trim=26.75 120 13 135, clip=true, width=0.45\textwidth]{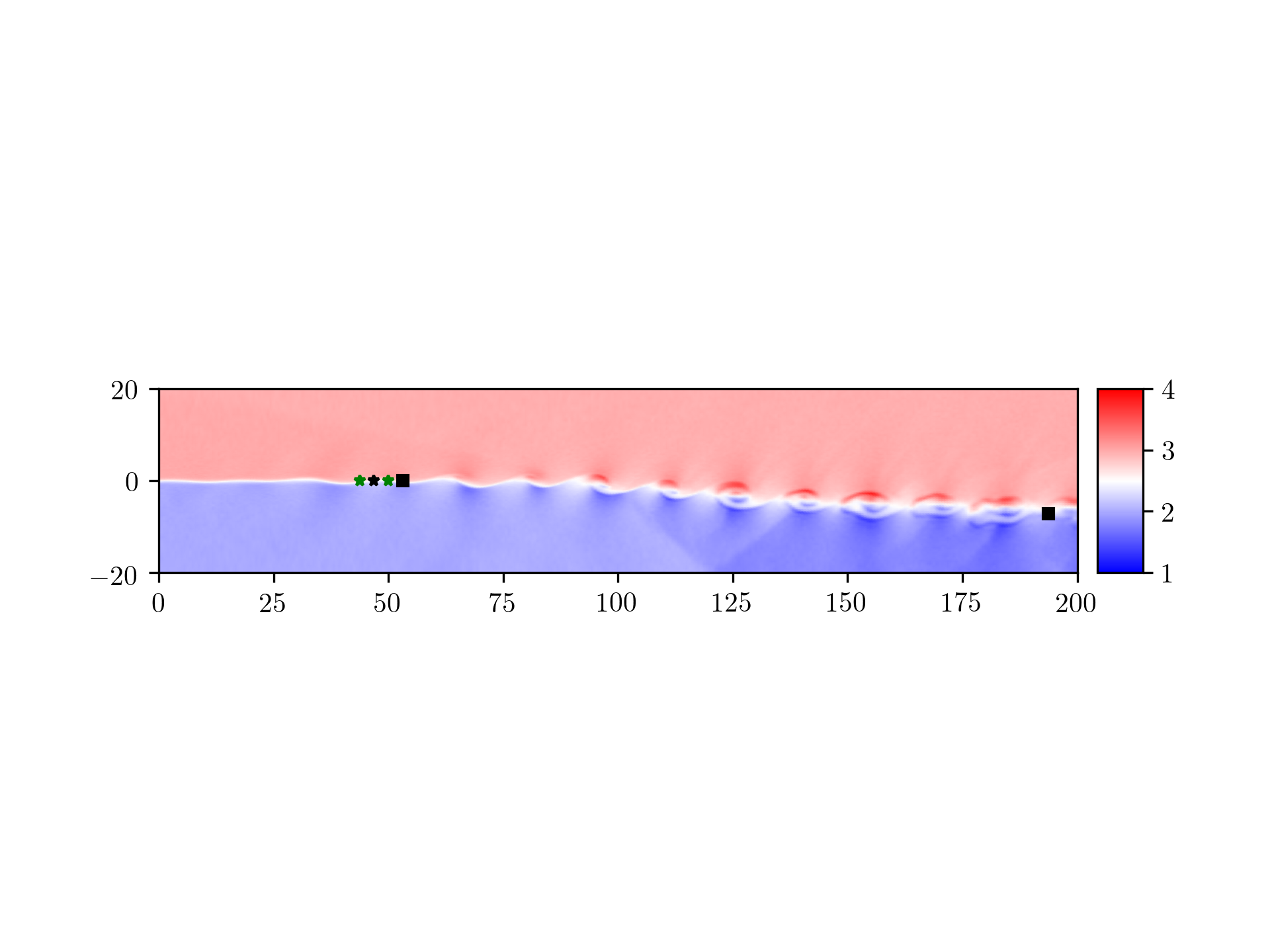}
    \end{tikzonimage}
    \label{subfig:u_snapshot_noise_0p01}
    }
    \subfloat{
    \begin{tikzonimage}[trim=30 120 10 135, clip=true, width=0.45\textwidth]{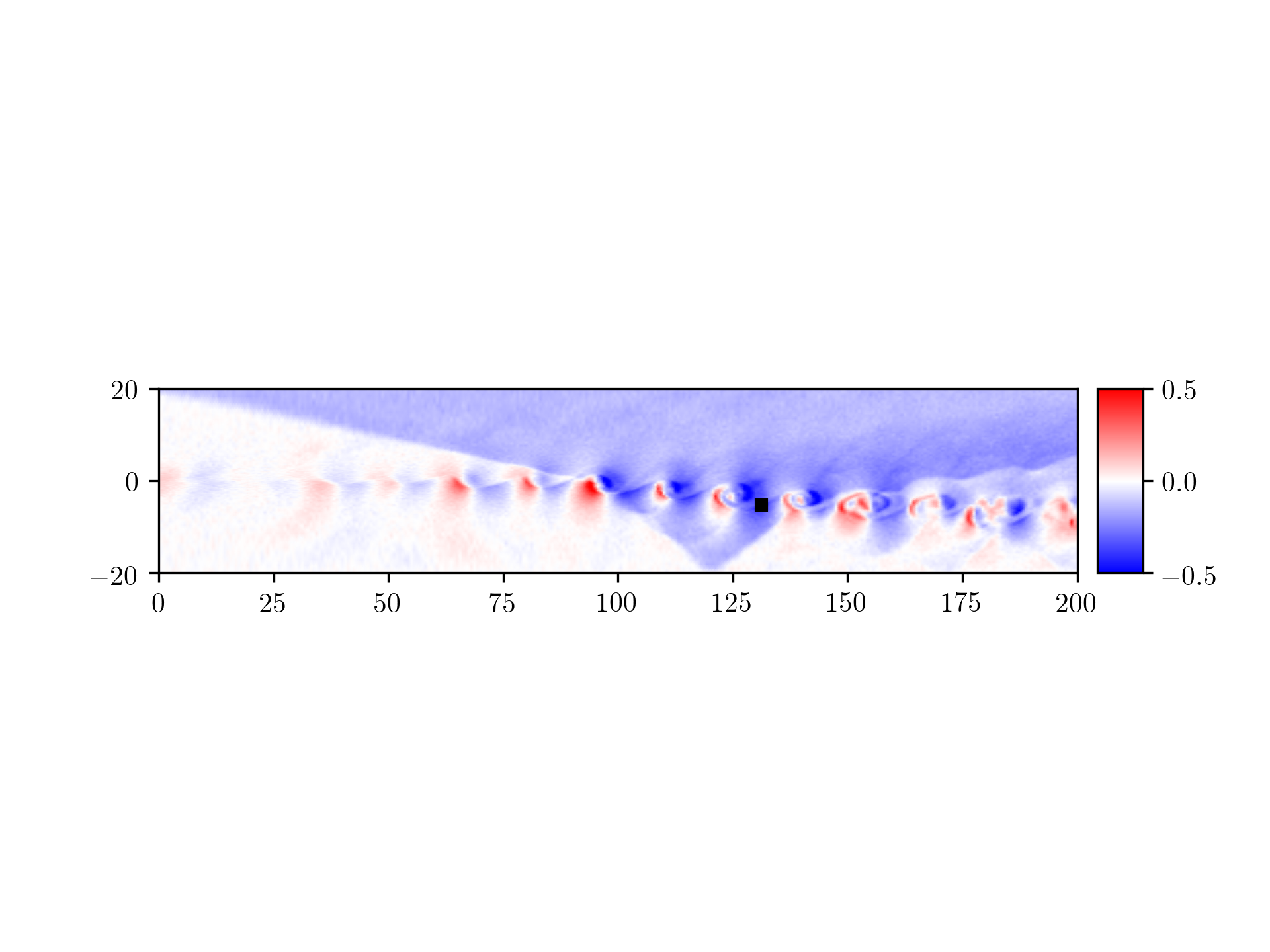}
    \end{tikzonimage}
    \label{subfig:v_snapshot_0p01}
    } \\
    \subfloat{
    \begin{tikzonimage}[trim=26.75 120 13 135, clip=true, width=0.45\textwidth]{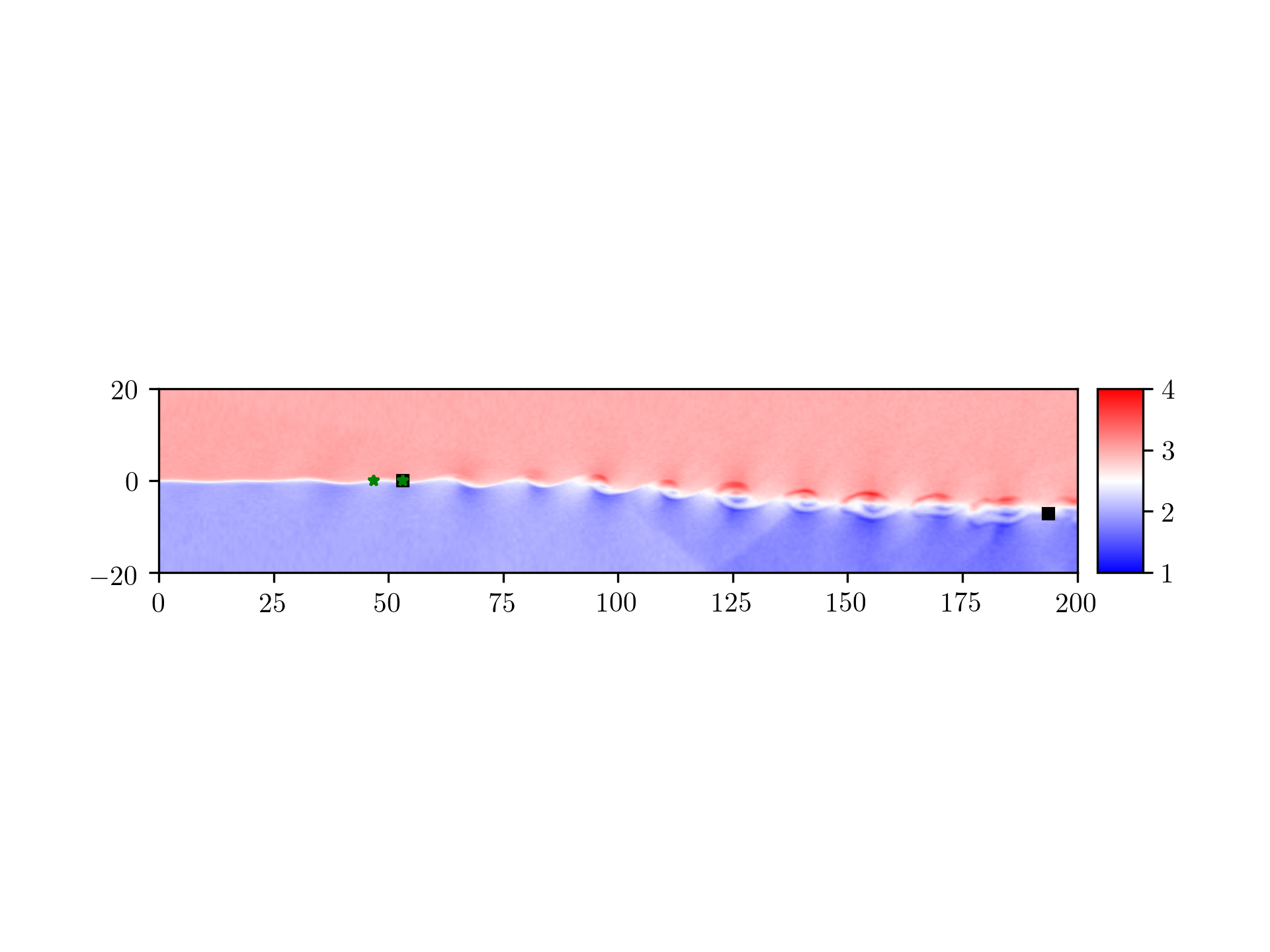}
    \end{tikzonimage}
    \label{subfig:u_snapshot_noise_0p02}
    }
    \subfloat{
    \begin{tikzonimage}[trim=30 120 10 135, clip=true, width=0.45\textwidth]{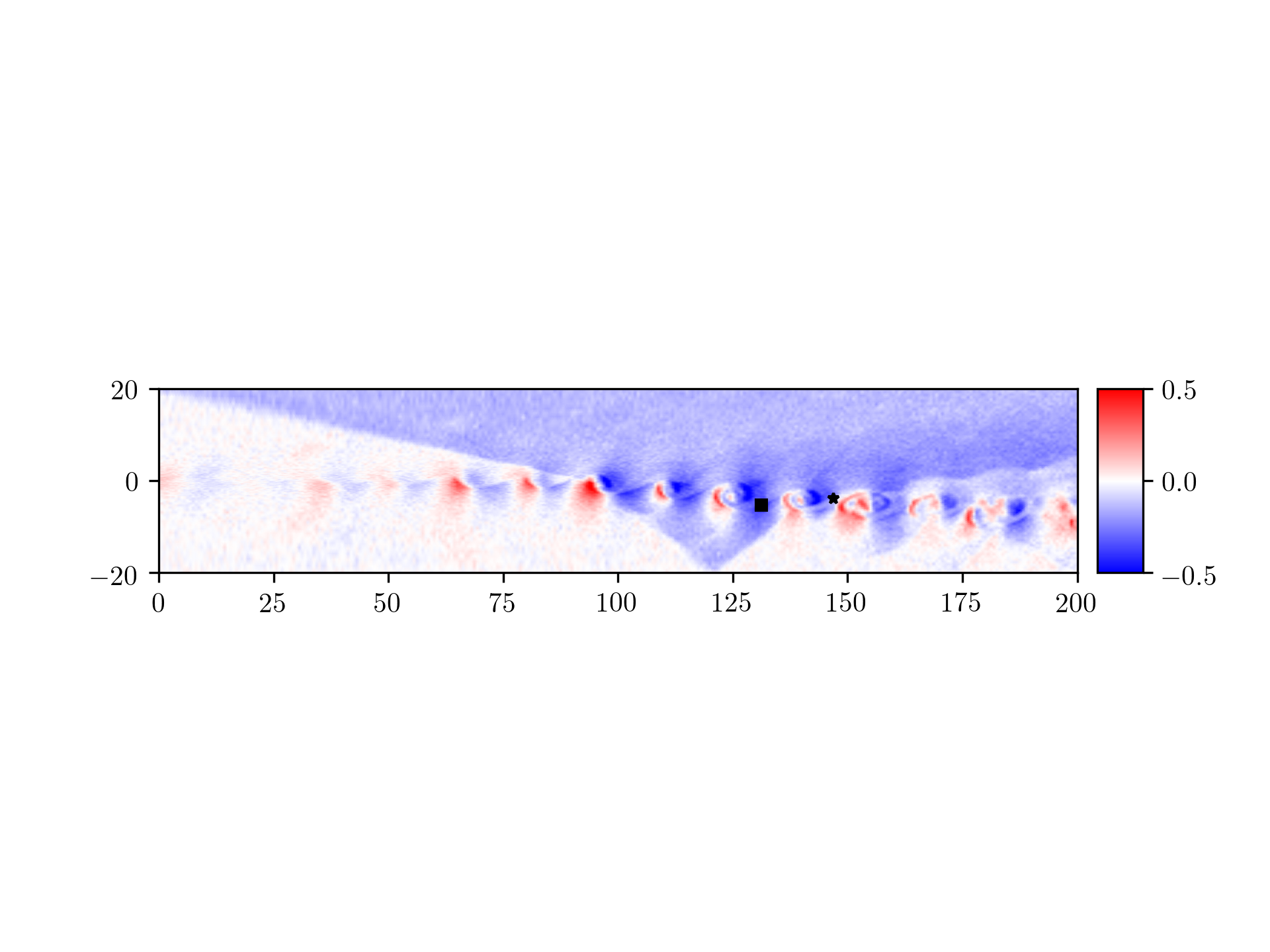}
    \end{tikzonimage}
    \label{subfig:v_snapshot_0p02}
    } \\
    \subfloat{
    \begin{tikzonimage}[trim=26.75 120 13 135, clip=true, width=0.45\textwidth]{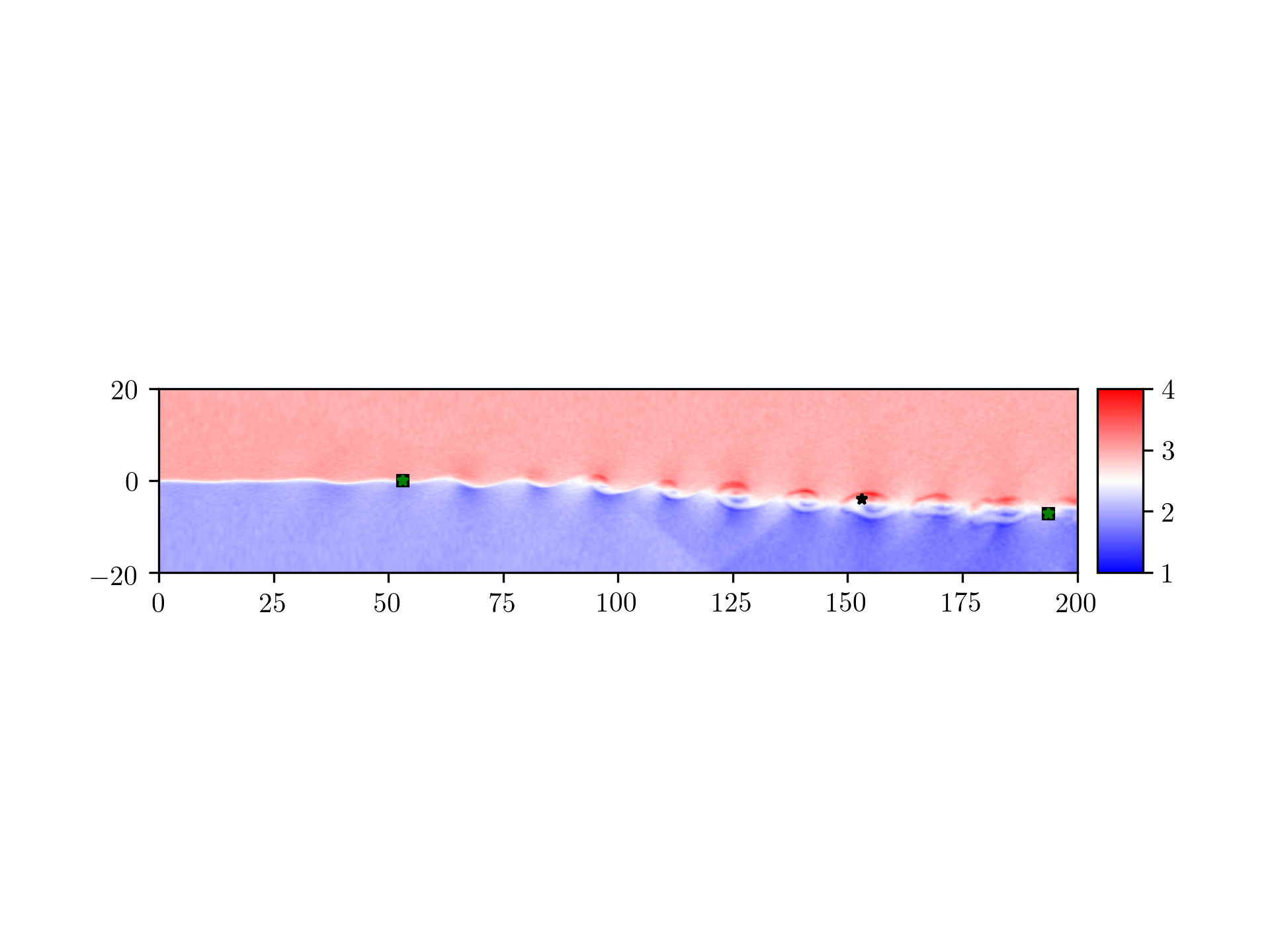}
    \end{tikzonimage}
    \label{subfig:u_snapshot_noise_0p03}
    }
    \subfloat{
    \begin{tikzonimage}[trim=30 120 10 135, clip=true, width=0.45\textwidth]{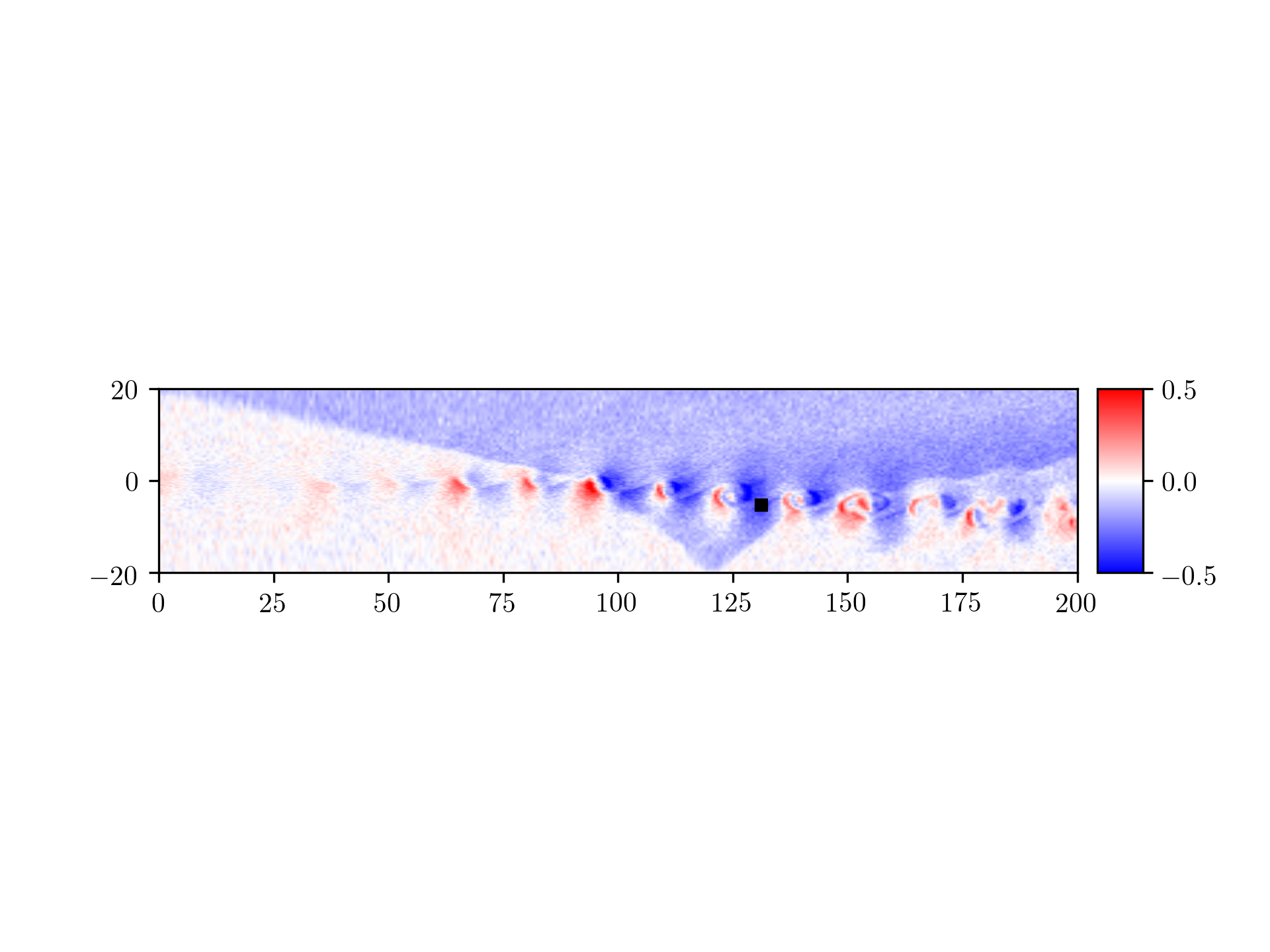}
    \end{tikzonimage}
    \label{subfig:v_snapshot_0p03}
    } \\
    \subfloat{
    \begin{tikzonimage}[trim=26.75 120 13 135, clip=true, width=0.45\textwidth]{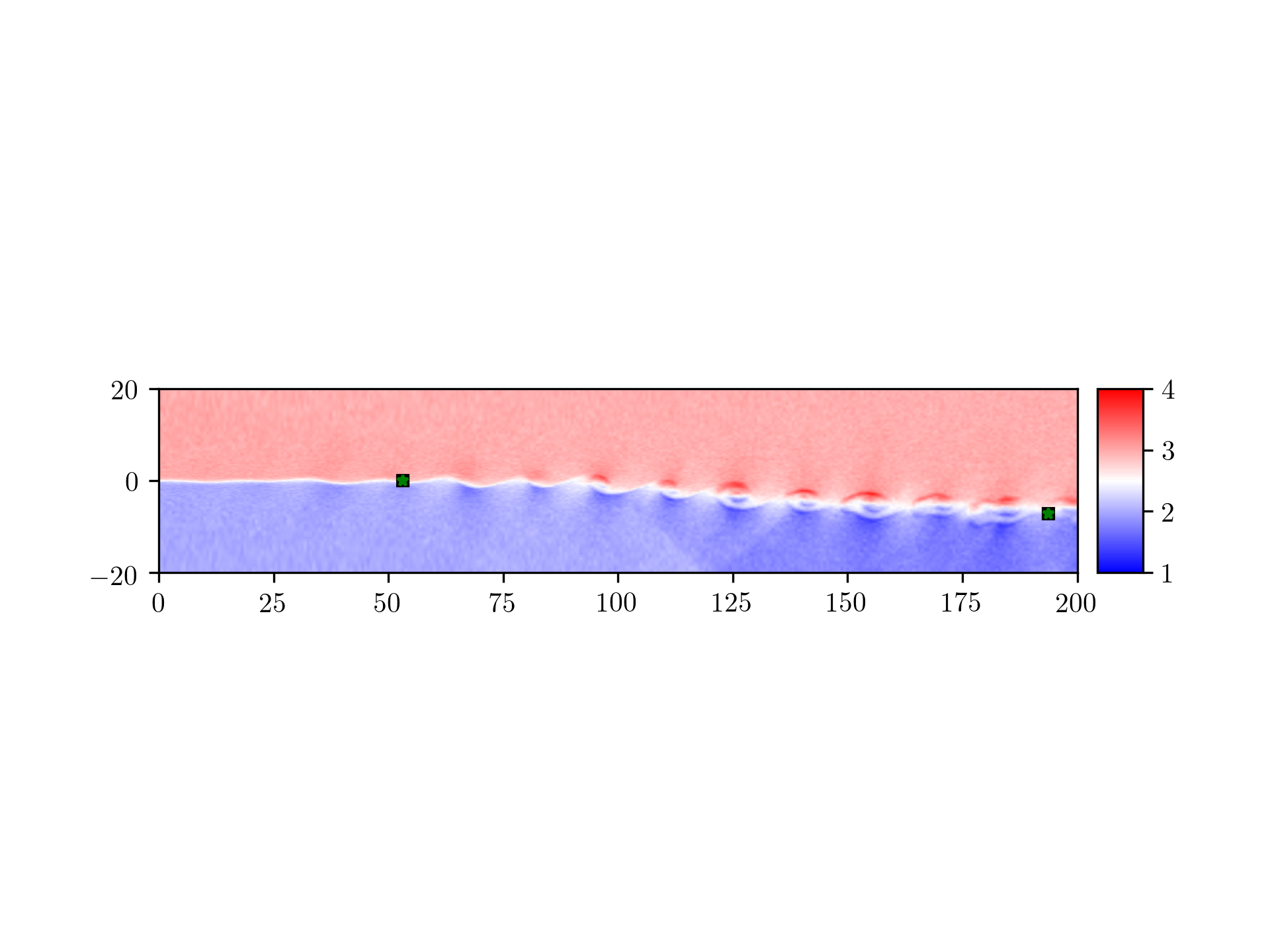}
    \end{tikzonimage}
    \label{subfig:u_snapshot_noise_0p04}
    }
    \subfloat{
    \begin{tikzonimage}[trim=30 120 10 135, clip=true, width=0.45\textwidth]{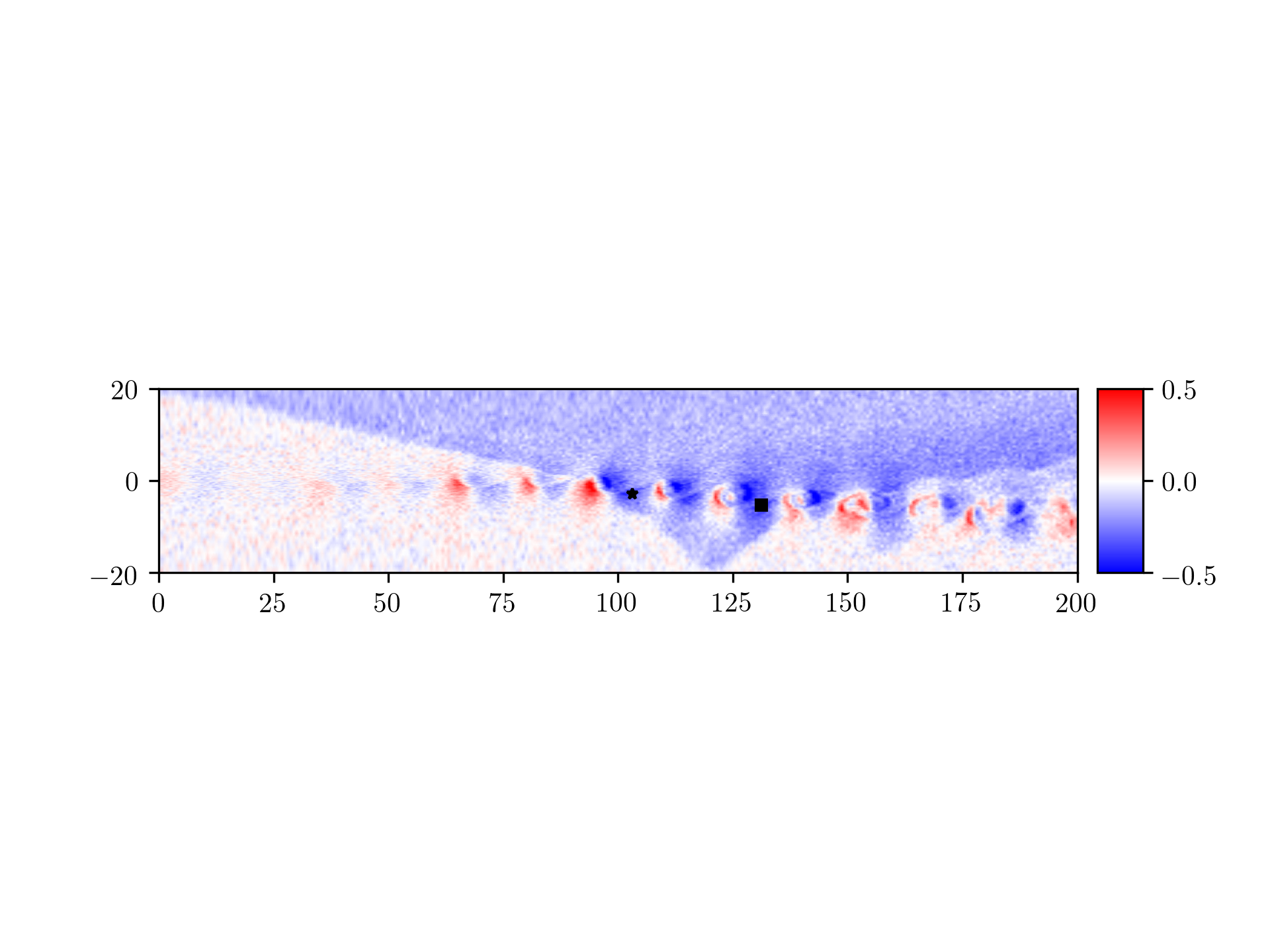}
    \end{tikzonimage}
    \label{subfig:v_snapshot_0p04}
    } \\
    \subfloat{
    \begin{tikzonimage}[trim=26.75 120 13 135, clip=true, width=0.45\textwidth]{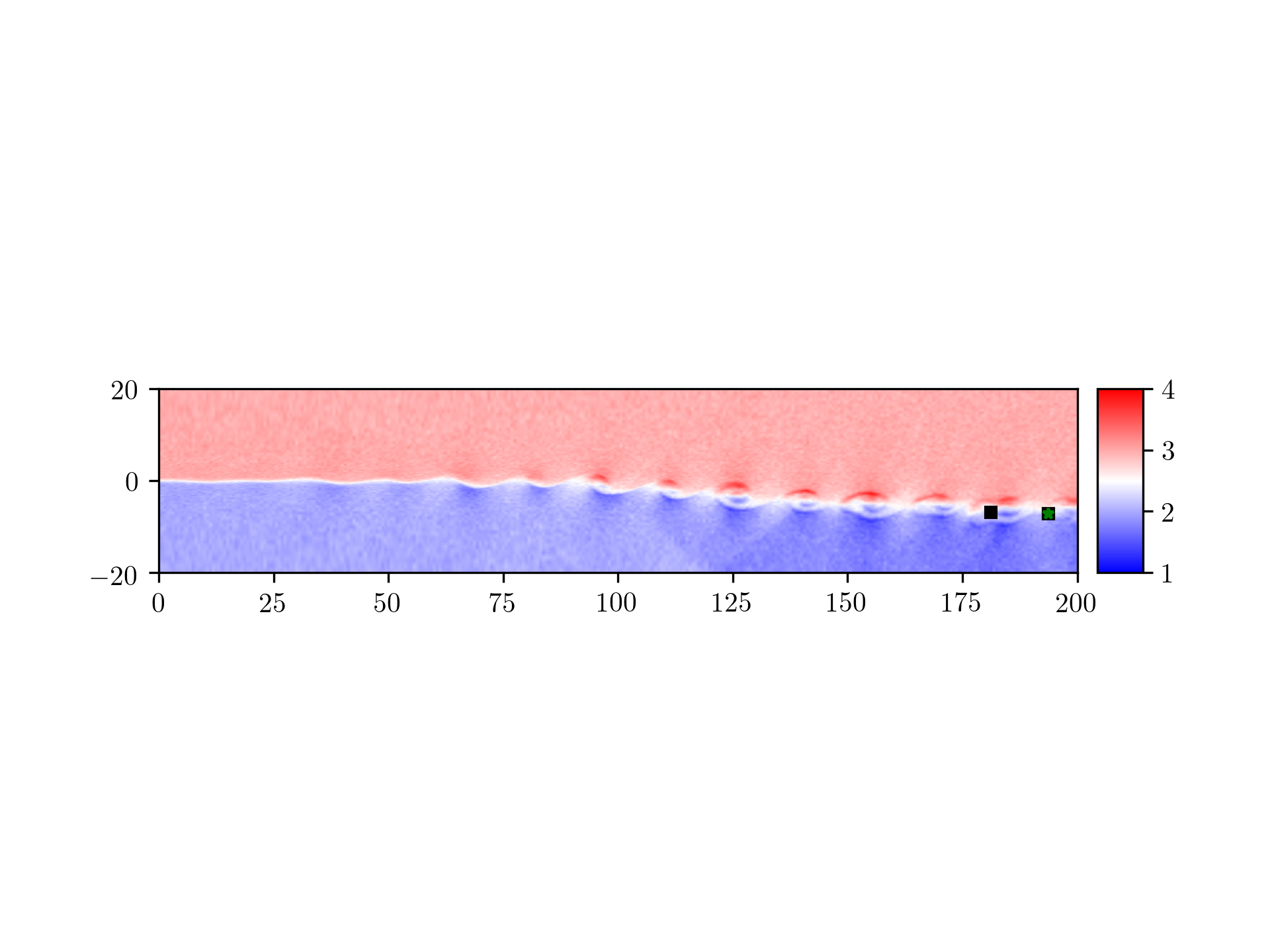}
    \end{tikzonimage}
    \label{subfig:u_snapshot_noise_0p05}
    }
    \subfloat{
    \begin{tikzonimage}[trim=30 120 10 135, clip=true, width=0.45\textwidth]{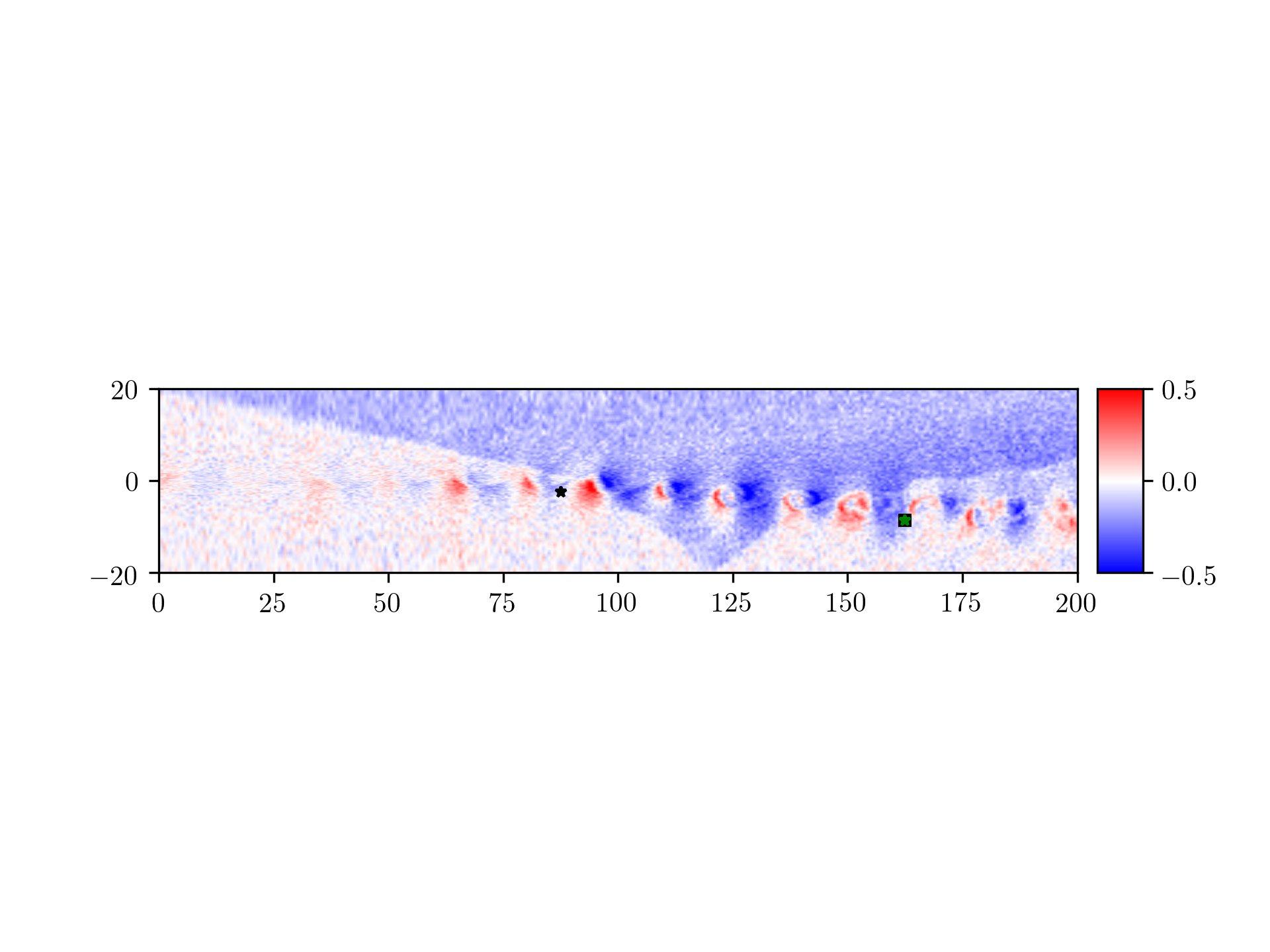}
    \end{tikzonimage}
    \label{subfig:v_snapshot_0p05}
    }
    \caption{We show the stream-wise (first column) and transverse (second column) components of velocity for a single snapshot of the shock-mixing layer flow with increasing levels of noise added in each successive row. Independent Gaussian noise with standard deviations $\sigma_{\text{noise}}=0.01$, $0.02$, $0.03$, $0.04$, and $0.05$ are added to each velocity component at each location on the computational grid.
    The first two sensors chosen by detectable difference method of Section~\ref{subsec:MSF} are indicated by green stars and the third is indicated by a black star. The three sensors selected using the amplification tolerance method of Section~\ref{subsec:amplification_tol} with bisection search over $L$ are indicated by black squares.}
    \label{fig:sml_flowfields_noise}
\end{figure*}

We found that the amplification tolerance-based method identified the same sensors across each of the first four noise levels $\sigma_{\text{noise}} = 0.01$, $0.02$, $0.03$, and $0.04$.
While these sensor locations differed slightly from the ones selected without noise (shown in Figure~\ref{fig:shock_mixing_layer_snapshots}), they too were capable of robustly recovering the underlying phase of the system as illustrated by their corresponding measurements in the third column of Figure~\ref{fig:sml_measurements_noise}.
At the largest noise level $\sigma_{\text{noise}}=0.5$, the sensors selected using this method changed, but were still capable of revealing the phase as shown in the bottom right plot of Figure~\ref{fig:sml_measurements_noise}.
The detectable difference-based method selected the same three sensors as in the zero noise case when $\sigma_{\text{noise}} = 0.01$ with the first two remaining the same up to $\sigma_{\text{noise}}=0.02$.
At these noise levels the first two sensors are sufficient to reveal the underlying phase of the system as shown in the first two plots in the first column of Figure~\ref{fig:sml_measurements_noise}.
Beyond this level of noise, the first two sensors were no longer able to reveal the phase as illustrated by the self-intersections in the last three plots in the first column of Figure~\ref{fig:sml_measurements_noise}.
While it is admittedly difficult to see from the last three plots in the middle column of Figure~\ref{fig:sml_measurements_noise}, the third sensor eliminated these self-intersections by raising one of the two intersecting branches and allowing the phase to be determined.
\begin{figure*}
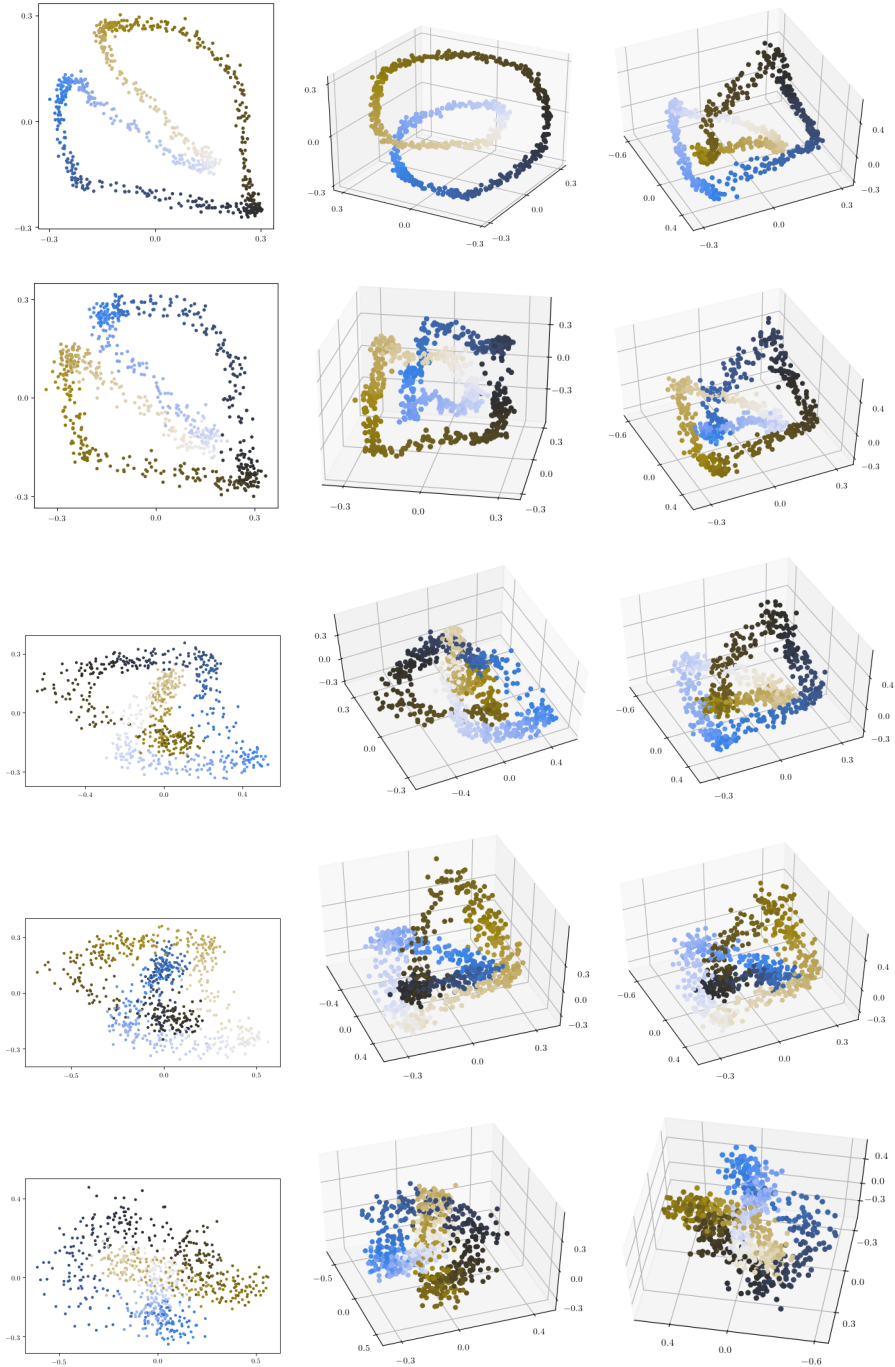

    \centering
    \subfloat{
    \begin{tikzonimage}[trim=70 15 75 35, clip=true, width=0.28\textwidth]{Figures/ShockMixingLayer/noise_0p01/MDD_greedy2_data.PNG}
    \end{tikzonimage}
    \label{subfig:secant_MSE_data2_0p01}
    }
    \subfloat{
    \begin{tikzonimage}[trim=65 30 65 50, clip=true, width=0.3\textwidth]{Figures/ShockMixingLayer/noise_0p01/MDD_greedy3_data.PNG}
    \end{tikzonimage}
    \label{subfig:secant_MSE_data3_0p01}
    }
    \subfloat{
    \begin{tikzonimage}[trim=90 30 50 40, clip=true, width=0.3\textwidth]{Figures/ShockMixingLayer/noise_0p01/secant_Lip_data.PNG}
    \end{tikzonimage}
    \label{subfig:secant_Lip_data_0p01}
    } \\
    \subfloat{
    \begin{tikzonimage}[trim=70 15 75 35, clip=true, width=0.28\textwidth]{Figures/ShockMixingLayer/noise_0p02/MDD_greedy2_data.PNG}
    \end{tikzonimage}
    \label{subfig:secant_MSE_data2_0p02}
    }
    \subfloat{
    \begin{tikzonimage}[trim=100 35 70 55, clip=true, width=0.3\textwidth]{Figures/ShockMixingLayer/noise_0p02/MDD_greedy3_data.PNG}
    \end{tikzonimage}
    \label{subfig:secant_MSE_data3_0p02}
    }
    \subfloat{
    \begin{tikzonimage}[trim=90 30 50 40, clip=true, width=0.3\textwidth]{Figures/ShockMixingLayer/noise_0p02/secant_Lip_data.PNG}
    \end{tikzonimage}
    \label{subfig:secant_Lip_data_0p02}
    } \\
    \subfloat{
    \begin{tikzonimage}[trim=30 45 30 55, clip=true, width=0.3\textwidth]{Figures/ShockMixingLayer/noise_0p03/MDD_greedy2_data.PNG}
    \end{tikzonimage}
    \label{subfig:secant_MSE_data2_0p03}
    }
    \subfloat{
    \begin{tikzonimage}[trim=75 40 70 40, clip=true, width=0.3\textwidth]{Figures/ShockMixingLayer/noise_0p03/MDD_greedy3_data.PNG}
    \end{tikzonimage}
    \label{subfig:secant_MSE_data3_0p03}
    }
    \subfloat{
    \begin{tikzonimage}[trim=95 30 50 45, clip=true, width=0.3\textwidth]{Figures/ShockMixingLayer/noise_0p03/secant_Lip_data.PNG}
    \end{tikzonimage}
    \label{subfig:secant_Lip_data_0p03}
    } \\
    \subfloat{
    \begin{tikzonimage}[trim=30 45 30 55, clip=true, width=0.3\textwidth]{Figures/ShockMixingLayer/noise_0p04/MDD_greedy2_data.PNG}
    \end{tikzonimage}
    \label{subfig:secant_MSE_data2_0p04}
    }
    \subfloat{
    \begin{tikzonimage}[trim=100 30 55 45, clip=true, width=0.3\textwidth]{Figures/ShockMixingLayer/noise_0p04/MDD_greedy3_data.PNG}
    \end{tikzonimage}
    \label{subfig:secant_MSE_data3_0p04}
    }
    \subfloat{
    \begin{tikzonimage}[trim=95 30 45 45, clip=true, width=0.3\textwidth]{Figures/ShockMixingLayer/noise_0p04/secant_Lip_data.PNG}
    \end{tikzonimage}
    \label{subfig:secant_Lip_data_0p04}
    } \\
    \subfloat{
    \begin{tikzonimage}[trim=30 30 30 40, clip=true, width=0.3\textwidth]{Figures/ShockMixingLayer/noise_0p05/MDD_greedy2_data.PNG}
    \end{tikzonimage}
    \label{subfig:secant_MSE_data2_0p05}
    }
    \subfloat{
    \begin{tikzonimage}[trim=100 30 50 45, clip=true, width=0.3\textwidth]{Figures/ShockMixingLayer/noise_0p05/MDD_greedy3_data.PNG}
    \end{tikzonimage}
    \label{subfig:secant_MSE_data3_0p05}
    }
    \subfloat{
    \begin{tikzonimage}[trim=105 40 70 50, clip=true, width=0.3\textwidth]{Figures/ShockMixingLayer/noise_0p05/secant_Lip_data.PNG}
    \end{tikzonimage}
    \label{subfig:secant_Lip_data_0p05}
    }
    \caption{These plots show the measurements made by sensors selected using the detectable difference method of Section~\ref{subsec:MSF} with two (first column) and three (second column) sensors along with the amplification tolerance method of Section~\ref{subsec:amplification_tol} with three sensors (third column) on the shock-mixing layer flow problem with various levels of added noise. 
    Each row shows the result of adding independent Gaussian noise with standard deviations $\sigma_{\text{noise}}=0.01$, $0.02$, $0.03$, $0.04$, and $0.05$ to each velocity component at each location on the computational grid.}
    \label{fig:sml_measurements_noise}
\end{figure*}

\section{Conclusion}
In this paper we have identified a common type of nonlinear structure that causes techniques for sensor placement relying on linear reconstruction accuracy as an optimization criterion to consistently fail to identify minimal sets of sensors.
Specifically, these techniques break down and lead to costly over-sensing when the data is intrinsically low dimensional, but is curved in such a way that energetic components are functions of less energetic ones, but not vice versa.
This problem occurs commonly in fluid flows, period-doubling bifurcations in ecology and cardiology, as well as in spectral methods for manifold learning.
We demonstrated that a representative collection of linear techniques fail to identify sensors from which the state of a shock-mixing layer flow can be reconstructed, and we provide a simple example that illustrates that the performance of the linear techniques can be arbitrarily bad.
In addition, we demonstrated that it is impossible to use linear feature selection methods to choose fundamental nonlinear eigen-coordinates in manifold learning problems.

To remedy these issues, we proposed a new approach for sensor placement that relies on the information contained in secant vectors between data points to quantify nonlinear reconstructability of desired quantities from measurements.
The resulting secant-based optimization problems turn out to have useful diminishing returns properties that enable efficient greedy approximation algorithms to achieve guaranteed high levels of performance.
We also describe how down-sampling can be used to improve the computational scaling of these algorithms while still providing guarantees regarding the reconstructability of states in the underlying set from which the available data is sampled.
Finally, these methods prove to be capable of selecting minimal collections of sensors in the shock-mixing layer problem as well as selecting the minimal set of fundamental manifold learning coordinates on a torus --- both of which are problems where the linear techniques fail.

\begin{acknowledgements}
The authors would like to thank Gregory Blaisdell,  Shih-Chieh Lo, Tasos Lyrintzis, and Kurt Aikens for providing the code used to compute the shock-mixing layer interaction. 
We also want to thank Alberto Padovan and Anastasia Bizyaeva for providing key references that motivate our main example, provide connections with period doubling, and reveal how linear methods can fail to find adequate sensor and actuator locations in real-world problems.
\end{acknowledgements}


\bibliographystyle{abbrv}
\bibliography{References}

\appendix

\section{Implementation Details}
\label{app:implementation_details}

\subsection{Principal Component Analysis (PCA) and Isomap}
In this paper, we used principal component analysis (PCA) \cite{Hotelling1933analysis} in order to find a modal basis for pivoted QR factorization and to identify a low-dimensional representation of the state and its covariance for determinantal D-optimal selection techniques on the shock-mixing layer flow.
In order to perform PCA, one needs an appropriate inner product on the space in which the data lives.
In the case of the shock-mixing layer problem, we use the energy-based inner product for compressible flows developed in \cite{Rowley2004model} together with trapezoidal quadrature weights to approximate the integrals of the spatial fields over a stretched computational grid.
In this problem, the data consists of vectors $\vect{z}$ whose elements are the streamwise velocity $u$, transverse velocity $v$, and the local speed of sound $a$ over a $321\times 81$ computational grid.
The inner product between two snapshots $\vect{z}$ and $\vect{z}'$ is defined by
\begin{multline}
    \left\langle \vect{z},\ \vect{z}' \right\rangle = \vect{z}^T\mat{W}\vect{z}' = \sum_{i=1}^{321}\sum_{j=1}^{81}w_{i,j}\left( u_{i,j}^2 + v_{i,j}^2 + a_{i,j}^2 \right) \\
    \approx \int_{\Omega}\left[u(\xi_1,\xi_2)^2 + v(\xi_1,\xi_2)^2 + a(\xi_1,\xi_2)^2 \right] d\xi_1 d\xi_2,
\end{multline}
where the weights $\lbrace w_{i,j} \rbrace$ are selected to perform trapezoidal quadrature.
Principal component analysis is performed by computing an economy-sized singular value decomposition of the mean-subtracted data matrix
\begin{equation}
    \tilde{\mat{U}}\boldsymbol{\Sigma}\mat{V}^T = \mat{W}^{1/2} \begin{bmatrix} (\vect{z}_1-\bar{\vect{z}}) & \cdots & (\vect{z}_N-\bar{\vect{z}}) \end{bmatrix}, \quad \bar{\vect{z}} = \frac{1}{N}\sum_{i=1}^N \vect{z}_i
\end{equation}
and forming the matrix of principal vectors $\mat{U} = \mat{W}^{-1/2}\tilde{\mat{U}}$.
These vectors, making up the columns of $\mat{U}$, are orthonormal with respect to the $\mat{W}$-weighted inner product.
If we represent the states in this basis so that $\vect{z}_i = \bar{\vect{z}} + \mat{U}\vect{x}_i$ then $\vect{x}$ has empirical covariance $\mat{C}_{\vect{x}} = \frac{1}{N}\boldsymbol{\Sigma}^2$.

The same weighted inner product was used to compute the distances between each data point $\vect{z}_i$ and its $10$ nearest neighbors in order to compute the leading $50$ Isomap coordinates using scikit learn's implementation found at \url{https://scikit-learn.org/stable/modules/generated/sklearn.manifold.Isomap.html}.

\subsection{(Group) LASSO}

We use the Python implementation of group LASSO~\cite{Yuan2006model} by Yngve Mardal Moe at the University of Oslo that can be found at \url{https://group-LASSO.readthedocs.io/en/latest/index.html}.
We select among $2210$ sensor measurements of $u$ and $v$ velocity components over a grid of $1105$ spatial locations taken directly from the shock-mixing layer snapshot data.
We tried two different kinds of target variables to be reconstructed via group LASSO.
For the method we call ``LASSO+PCA'', the target variables were the data's leading $100$ principal components which capture over $99\%$ of the data's variance.
For the method we call ``LASSO+Isomap'', the target variables were the leading two Isomap coordinates $\vect{g}(\xx) = \left( \phi_1(\xx), \phi_2(\xx) \right)$, which reveal the phase angle $\theta$.
The sparsity-promoting regularization parameter was found using a bisection search in each case and was the smallest value, to within a tolerance of $10^{-5}$, for which group LASSO selected $3$ sensors.

\subsection{Bayesian D-Optimal Selection}
\label{sec:bayesian-d-optimal}
We use two different approaches for Bayesian D-optimal sensor placement: the greedy technique of \cite{Shamaiah2010greedy} and the convex relaxation approach by \cite{Joshi2008sensor}.
In the greedy approach, we leverage the submodularity of the objective in the case when $\mat{T}=\mat{I}$ in order to use the accelerated greedy algorithm of M.~Minoux \cite{Minoux1978accelerated}.
For the convex approach, we wrote a direct Python translation of a MATLAB code
written by S.~Joshi and S.~Boyd that implements a Newton method with line
search, and may be found at \url{https://web.stanford.edu/~boyd/papers/matlab/sensor_selection/}.
We use the gradient and Hessian matrices for the Bayesian D-optimal objective from their paper \cite{Joshi2008sensor}.

In both the greedy and convex approach for the shock-mixing layer problem, we take the state to be its representation using $100$ principal components with covariance given by $\mat{C}_{\vect{x}} = \frac{1}{N}\boldsymbol{\Sigma}^2$ as computed by PCA.
These principal components were also used as the relevant information to be reconstructed, i.e., $\mat{T} = \mat{I}$.
The sensor noise was assumed to be isotropic with covariance $\mat{C}_{\vect{n}_{\scrS}} = \sigma^2 \mat{I}_{d_{\scrS}}$ with $\sigma = 0.02$.
We tried many other values of $\sigma$, yielding different sensor locations, none of which could be used for nonlinear reconstruction.
The ones we show at $\sigma = 0.02$ are representative.

\subsection{Maximum Likelihood D-Optimal Selection}
We used the maximum likelihood D-optimal selection technique based on convex relaxation found in \cite{Joshi2008sensor} in order to choose sensors to try to reconstruct only the $3$rd and $4$th principal components of the shock-mixing layer snapshots.
That is, if $\mat{U} = \begin{bmatrix} \vect{u}_1 & \vect{u}_2 & \cdots \end{bmatrix}$ is the matrix of principal components, we model the state as a linear combination of $\vect{u}_3$ and $\vect{u}_4$ together with isotropic Gaussian noise.
We try to find the sensors so that the correct coefficients on $\vect{u}_3$ and $\vect{u}_4$ can be recovered with high confidence from the measurements.
The rationale for doing so is the fact that these two components are sufficient to nonlinearly reconstruct the state of the system if they can be measured. 
As in Section~\ref{sec:bayesian-d-optimal} above, we use a direct Python translation of a MATLAB code written by S.~Joshi and
S.~Boyd, which may be found at \url{https://web.stanford.edu/~boyd/papers/matlab/sensor_selection/}.

\subsection{Pivoted QR Factorization}
For the pivoted QR factorization method~\cite{Drmac2016new,Businger1965linear} applied to the shock-mixing layer flow, we represent the state approximately as a linear combination of the leading three principal components.
Scipy's implementation of pivoted QR factorization found at \url{https://docs.scipy.org/doc/scipy/reference/generated/scipy.linalg.qr.html} was used to select among the $2210$ allowable sensors those that allow robust reconstruction of these first three principal components.
We also tried representing the state using more principal components and taking the first three sensor locations chosen via pivoted QR factorization.
As with the case when only three principal components are used, these sensors do not enable nonlinear reconstruction of the state.

\subsection{Secant-Based Detectable Differences}
The secant-based detectable difference method was implemented using the accelerated greedy algorithm of M. Minoux \cite{Minoux1978accelerated} to optimize the objective computed over all secants between points in the training data set consisting of $N=750$ snapshots of the shock-mixing layer velocity field.
We select among the $2210$ sensor measurements of $u$ and $v$ velocity components on a grid of $1105$ spatial locations taken directly from the shock-mixing layer snapshot data.
The target variables were chosen to be the leading two Isomap coordinates $\vect{g}(\xx) = \left( \phi_1(\xx), \phi_2(\xx) \right)$, which reveal the phase angle $\theta$.
The greedy algorithm first reveals the two sensor locations marked by green stars and then the black star in Figure~\ref{fig:shock_mixing_layer_snapshots} over the range of $0.02 \leq \gamma \leq 0.06$, which can be used to reveal the exact phase of the system.
Choosing smaller values of $\gamma$ produce different sensors that can also be used to reveal the phase, but with reduced robustness to measurement perturbations.
Gaussian process regression \cite{Rasmussen2003gaussian} was used to reconstruct the leading $100$ principal components of the flowfields from the sensor measurements.
We used scikit learn's implementation which can be found at \url{https://scikit-learn.org/stable/modules/generated/sklearn.gaussian_process.GaussianProcessRegressor.html} together with a Mat\'{e}rn and white noise kernel whose parameters were optimized during the fit.

For the torus example, the relevant information we wish to reconstruct are the leading $100$ Isomap eigen-coordinates $\vect{g}(\xx) = \left( \phi_1(\xx), \ldots, \phi_{100}(\xx) \right)$ computed from $2000$ points sampled from the torus according to Eq.~\textbf{\ref{eqn:torus}}.
The objective function was evaluated using secants between $\#(\mathcal{B}) = 100$ randomly sampled base points and the original set of $N=2000$ points.
The correct three coordinates $\phi_1, \phi_2, \phi_7$ are selected from among the first $100$ consistently across a wide range of measurement separation values $0.05\leq \gamma \leq 3.0$.
We note that these values vary slightly with the selected base points and these particular values hold only for one instance.

\subsection{Secant-Based Amplification Tolerance}
Like the secant-based detectable difference method described above, the secant-based amplification tolerance method was implemented using the same data, secant vectors, and target variables with the accelerated greedy algorithm.
A bisection search was used to find the smallest Lipschitz constant $L=1868$ to within a tolerance of $1$ for which the algorithm selects three sensors on the shock-mixing layer flow.
Three (different) sensors that correctly reveal the state of the flow are selected by this algorithm over a range $1868\leq L \leq 47624$, above which only two sensors that cannot reveal the state are selected.
We also find that with $L=129$, the minimum possible number of sensors exceeds $\#(\scrS_K)/(1+\ln{\kappa}) = 3.18 > 3$.
Therefore, the minimum possible reconstruction Lipschitz constant using three sensors that one might find by an exhaustive combinatorial search must be greater than $129$.
We admit that this is likely a rather pessimistic bound, but we cannot check it as there are $\binom{2210}{3} \approx 1.8\times 10^9$ possible choices for three sensors in this problem.

When applied to select from among the leading $100$ Isomap eigen-coordinates on the torus example with the same setup as the secant-based detectable differences method, the amplification tolerance method selects the appropriate collection $\phi_1, \phi_2, \phi_7$ over the range $7.1\leq L \leq 25$.
We note that these value vary slightly with the selected base points and these particular values hold only for one instance.

\section{Submodularity of Objectives}

We will need the definition of a modular function given below.
\begin{definition}[Modular Function]
\label{def:modular_function}
Denote the set of all subsets of $\scrM$ by $2^\scrM$.
A real-valued function of the subsets $f:2^\scrM\to\mathbb{R}$ is called ``modular'' when it can be written as a sum
\begin{equation}
    f(\scrS) = \sum_{j\in\scrS} a_j
\end{equation}
of constants $a_j$, $j\in\scrM$.
\end{definition}
The key ingredient needed to prove submodularity for the objectives described in Section~\ref{sec:secants} is the following lemma.
\begin{lemma}[Concave Composed with Modular is Submodular]
\label{lem:concave_composed_with_modular_is_submodular}
Let $h:\mathbb{R}\to\mathbb{R}$ be a concave function and let $a:2^\scrM\to\mathbb{R}$ defined by
\begin{equation}
    a(\scrS) = \sum_{j\in\scrS} a_j
\end{equation}
be a modular function (Def.~\ref{def:modular_function}) of subsets $\scrS\subseteq\scrM$ with $a_j \geq 0$ for all $j\in\scrM$.
Then the function $f:2^\scrM\to\mathbb{R}$ defined by
\begin{equation}
    f(\scrS) = h(a(\scrS))
\end{equation}
is submodular.
\end{lemma}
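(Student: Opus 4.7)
The plan is to reduce the submodular inequality directly to the \emph{decreasing differences} property of a concave function on the real line, applied at two shifted points determined by $a(\scrS)$ and $a(\scrS')$.

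First, I would unpack the submodularity condition from Definition~\ref{def:submodular_function}. Fix $\scrS \subseteq \scrS' \subseteq \scrM\setminus\{j\}$ and set
\begin{equation}
s = a(\scrS), \qquad s' = a(\scrS'), \qquad \delta = a_j.
\end{equation}
Because $a$ is modular with nonnegative coefficients and $\scrS \subseteq \scrS'$, we have $0 \leq s \leq s'$ and $\delta \geq 0$. Moreover $a(\scrS\cup\{j\}) = s + \delta$ and $a(\scrS'\cup\{j\}) = s' + \delta$ since $j\notin\scrS'$. Thus the submodular inequality $f(\scrS\cup\{j\}) - f(\scrS) \geq f(\scrS'\cup\{j\}) - f(\scrS')$ becomes the one-dimensional statement
\begin{equation}
h(s + \delta) - h(s) \;\geq\; h(s' + \delta) - h(s').
\end{equation}

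Next I would verify this inequality using concavity of $h$. If $\delta = 0$ both sides vanish, so assume $\delta > 0$. The claim is the standard fact that for a concave function, the chord slope $\tfrac{h(t+\delta)-h(t)}{\delta}$ is non-increasing in $t$. To prove it, note that $s + \delta$ lies in the interval $[s, s'+\delta]$ (using $s\leq s' \leq s'+\delta$ and $s \leq s+\delta \leq s'+\delta$), so there exists $\lambda \in [0,1]$ with $s+\delta = \lambda s + (1-\lambda)(s'+\delta)$, and correspondingly $s' = (1-\lambda) s + \lambda (s' + \delta)$ by direct computation of the coefficients. Applying the definition of concavity to both convex combinations and adding the resulting inequalities yields
\begin{equation}
h(s+\delta) + h(s') \;\geq\; h(s) + h(s' + \delta),
\end{equation}
which is exactly what we need after rearrangement. (Alternatively, one could cite the decreasing-chord-slopes characterization of concavity directly.)

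I do not expect any real obstacle here; the entire content of the lemma is the reduction from set functions to the scalar inequality, followed by a standard one-line appeal to concavity. The only mild care needed is checking that $\delta \geq 0$ and $s \leq s'$ so that the two intervals $[s,s+\delta]$ and $[s',s'+\delta]$ are correctly ordered, which is where the hypothesis $a_j \geq 0$ is used.
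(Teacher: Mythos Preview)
Your proof is correct and follows essentially the same approach as the paper: both arguments place the two intermediate values $a(\scrS)+a_j$ and $a(\scrS')$ as convex combinations of the endpoints $a(\scrS)$ and $a(\scrS')+a_j$, apply concavity at each, and add the resulting inequalities. Your $\lambda$ and $1-\lambda$ coincide with the paper's $\alpha_2$ and $\alpha_1$, so the only difference is presentational.
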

\begin{proof}
Suppose that $\scrS\subseteq\scrS'\subseteq{\scrM}\setminus\lbrace j \rbrace$.
By concavity of $h$ we have
\begin{equation}
    h_{\alpha} = h((1-\alpha) a(\scrS) + \alpha (a(\scrS') + a_j)) \geq (1-\alpha) h_{0} + \alpha h_{1}
    \label{eqn:concavity_of_h}
\end{equation}
for every $\alpha \in [0,1]$, where we note that $h_0 = f(\scrS)$ and $h_1 = f(\scrS'\cup\lbrace j\rbrace)$.

Since $\lbrace a_{l} \rbrace$ are non-negative we have $a(\scrS) \leq a(\scrS) + a_j \leq a(\scrS') + a_j$ and $a(\scrS) \leq a(\scrS') \leq a(\scrS') + a_j$.
We can therefore find
\begin{equation}
    \alpha_1 = \frac{a_j}{a(\scrS') + a_j - a(\scrS)}, \quad \alpha_2 = \frac{a(\scrS') - a(\scrS)}{a(\scrS') + a_j - a(\scrS)}
\end{equation}
so that $h_{\alpha_1} = f(\scrS\cup\lbrace j\rbrace)$ and $h_{\alpha_2} = f(\scrS')$.
Note that $\alpha_1 + \alpha_2 = 1$.

We now use Eq.~\textbf{\ref{eqn:concavity_of_h}} at $\alpha_1$ and $\alpha_2$ to bound the increments of $f$:
\begin{equation}
    f(\scrS\cup\lbrace j\rbrace) - f(\scrS) = h_{\alpha_1} - h_0 \geq \alpha_1 (h_1 - h_0),
    \label{eqn:increment_bnd_1}
\end{equation}
\begin{equation}
    f(\scrS'\cup\lbrace j\rbrace) - f(\scrS') = h_1 - h_{\alpha_2} \leq (1 - \alpha_2) (h_1 - h_0)
    \label{eqn:increment_bnd_2}
\end{equation}
Combining the bounds Eq.~\textbf{\ref{eqn:increment_bnd_1}} and Eq.~\textbf{\ref{eqn:increment_bnd_2}} on the increments using $1 - \alpha_2 = \alpha_1$ we conclude that $f$ is submodular
\begin{equation}
    f(\scrS\cup\lbrace j\rbrace) - f(\scrS) \geq f(\scrS'\cup\lbrace j\rbrace) - f(\scrS').
  \end{equation}
\end{proof}
Using Lemma~\ref{lem:concave_composed_with_modular_is_submodular} it suffices to observe that each of the objectives described in Section~\ref{sec:secants} can be written as the composition of a concave function and a modular function.
We carry this out below in addition to proving normalization and monotonicity for these objectives.
\begin{lemma}[Detectable Difference Objective is Submodular]
    \label{lem:submodularity_of_detectable_differences}
    Suppose that the target variables $\vect{g}$ and measurements $\vect{m}_j$, $j\in \scrM$ are measurable functions.
    If $\mu$ and $\nu$ are measures on $\calX$, then the function defined by
    \begin{equation}
        f(\scrS) = \int_{\substack{(\xx, \xx')\in\calX\times\calX : \\ \Vert \vect{g}(\xx) - \vect{g}(\xx')\Vert_2 \geq \varepsilon}} w_{\gamma,\xx,\xx'}(\scrS) \Vert \vect{g}(\xx) - \vect{g}(\xx')\Vert_2^2 \ d\mu(\xx) \nu(d\xx'),
    \end{equation}
    for any $\varepsilon\geq 0$ with 
    \begin{equation}
        w_{\gamma, \xx, \xx'}(\scrS) = \min\left\lbrace\frac{1}{\gamma^2}\Vert \mS(\xx) - \mS(\xx') \Vert_2^2,\ 1 \right\rbrace,
    \end{equation}
    is normalized so that $f(\emptyset) = 0$, monotone non-decreasing so that $\scrS\subseteq\scrS'\ \Rightarrow\ f(\scrS) \leq f(\scrS')$, and submodular (Def.~\ref{def:submodular_function}).
\end{lemma}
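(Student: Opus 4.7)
The plan is to verify the three properties (normalization, monotonicity, submodularity) in turn, with submodularity being the main substantive claim and the other two being essentially immediate.

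For normalization, the key observation is that when $\scrS=\emptyset$, the stacked measurement vector $\mS(\xx)$ is empty, so $\Vert \mS(\xx)-\mS(\xx')\Vert_2 = 0$ for all $\xx,\xx'$, forcing $w_{\gamma,\xx,\xx'}(\emptyset)=0$ pointwise; the integral then vanishes. For monotonicity, if $\scrS\subseteq\scrS'$, then because $\mS$ is a stacking of the component measurements, $\Vert \mS(\xx)-\mS(\xx')\Vert_2^2 = \sum_{j\in\scrS}\Vert \vect{m}_j(\xx)-\vect{m}_j(\xx')\Vert_2^2$, and adding more indices only adds non-negative terms. Since $w_{\gamma,\xx,\xx'}$ is an increasing function of $\Vert \mS(\xx)-\mS(\xx')\Vert_2^2$ (truncated at $1$), we get $w_{\gamma,\xx,\xx'}(\scrS)\leq w_{\gamma,\xx,\xx'}(\scrS')$ pointwise, and monotonicity of the integral yields $f(\scrS)\leq f(\scrS')$.

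For submodularity, the strategy is to reduce to Lemma~\ref{lem:concave_composed_with_modular_is_submodular} pointwise in $(\xx,\xx')$ and then integrate. Fix any pair $(\xx,\xx')\in\calX\times\calX$ with $\Vert \vect{g}(\xx)-\vect{g}(\xx')\Vert_2\geq\varepsilon$. Define the modular set function
\begin{equation}
    a_{\xx,\xx'}(\scrS) = \sum_{j\in\scrS}\Vert \vect{m}_j(\xx)-\vect{m}_j(\xx')\Vert_2^2,
\end{equation}
which is a sum of non-negative constants $a_j = \Vert \vect{m}_j(\xx)-\vect{m}_j(\xx')\Vert_2^2$, and the concave function $h(t)=\min\{t/\gamma^2,\,1\}$ on $[0,\infty)$. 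Then $w_{\gamma,\xx,\xx'}(\scrS) = h(a_{\xx,\xx'}(\scrS))$, so by Lemma~\ref{lem:concave_composed_with_modular_is_submodular} the map $\scrS\mapsto w_{\gamma,\xx,\xx'}(\scrS)$ is submodular. Multiplying by the non-negative constant $\Vert \vect{g}(\xx)-\vect{g}(\xx')\Vert_2^2$ preserves submodularity pointwise.

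The final step is to lift pointwise submodularity to submodularity of the integrated objective. For each $j\in\scrM$ and each $\scrS\subseteq\scrS'\subseteq\scrM\setminus\{j\}$, the difference
\begin{equation}
    \bigl[w_{\gamma,\xx,\xx'}(\scrS\cup\{j\}) - w_{\gamma,\xx,\xx'}(\scrS)\bigr] - \bigl[w_{\gamma,\xx,\xx'}(\scrS'\cup\{j\}) - w_{\gamma,\xx,\xx'}(\scrS')\bigr] \geq 0
\end{equation}
holds pointwise and is bounded (by $2$), so integrating against the positive measure $\mu\otimes\nu$ restricted to $\{\Vert \vect{g}(\xx)-\vect{g}(\xx')\Vert_2\geq \varepsilon\}$ preserves the inequality, giving submodularity of $f$. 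The only mild subtlety to check is measurability of the integrand, which follows because $w_{\gamma,\xx,\xx'}(\scrS)$ is a continuous function of the measurable quantities $\vect{m}_j(\xx)-\vect{m}_j(\xx')$, and the integration region is a measurable set; I do not expect any real obstacle here.
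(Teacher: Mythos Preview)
Your proposal is correct and follows essentially the same approach as the paper: reduce to pointwise submodularity of $w_{\gamma,\xx,\xx'}(\scrS)$ by writing it as the concave function $t\mapsto\min\{t/\gamma^2,1\}$ composed with the modular function $\scrS\mapsto\sum_{j\in\scrS}\Vert\vect{m}_j(\xx)-\vect{m}_j(\xx')\Vert_2^2$ and invoking Lemma~\ref{lem:concave_composed_with_modular_is_submodular}, then integrate to obtain submodularity of $f$. Your treatment is slightly more careful in noting boundedness and measurability for the integration step, but the argument is otherwise identical.
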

\begin{proof}
Normalization is obvious.
It suffices to prove that the function $w_{\xx, \xx'}(\scrS)$ is monotone and submodular for any fixed $\xx,\xx'\in\calX$.
For if we suppose that
\begin{multline}
    \scrS\subseteq\scrS'\subseteq{\scrM}\setminus\lbrace j \rbrace \quad \Rightarrow \quad
    w_{\gamma,\xx,\xx'}(\scrS\cup\lbrace j\rbrace) - w_{\gamma,\xx,\xx'}(\scrS) \\
    \geq w_{\gamma,\xx,\xx'}(\scrS'\cup\lbrace j\rbrace) - w_{\gamma,\xx,\xx'}(\scrS'),
\end{multline}
then multiplying both sides of the inequality by $\Vert \vect{g}(\xx) - \vect{g}(\xx')\Vert_2^2$ and integrating proves that $f$ is submodular.
The same argument also proves monotonicity.

Let $\xx,\xx'\in\calX$ be fixed.
The squared separation between the measurements is given by a modular (Def.~\ref{def:modular_function}) sum
\begin{equation}
    \scrS \ \mapsto\ \Vert \mS(\xx) - \mS(\xx')\Vert_2^2 = \sum_{j\in\scrS} \Vert \vect{m}_j(\xx) - \vect{m}_j(\xx')\Vert_2^2
    \label{eqn:lem_modularity_of_squared_norm}
\end{equation}
of non-negative constants $\Vert \vect{m}_j(\xx) - \vect{m}_j(\xx')\Vert_2^2$ over each $j\in\scrS$.
Since $x\mapsto \min\lbrace x/\gamma^2,\ 1 \rbrace$ is a non-decreasing function, it follows that $\scrS\subseteq\scrS'\ \Rightarrow\ w_{\xx, \xx'}(\scrS) \leq w_{\xx, \xx'}(\scrS')$, proving monotonicity.

Submodularity of $w_{\xx, \xx'}(\scrS)$ follows from Lemma~\ref{lem:concave_composed_with_modular_is_submodular} since $w_{\xx, \xx'}(\scrS)$ is the composition of a concave function $x\mapsto \min\lbrace x/\gamma^2,\ 1 \rbrace$ with the modular function in Eq.~\textbf{\ref{eqn:lem_modularity_of_squared_norm}}. \end{proof}
\begin{lemma}[Lipschitz Objective is Submodular]
    \label{lem:submodularity_of_Lipschitz_objective}
    Suppose that the target variables $\vect{g}$ and measurements $\vect{m}_j$, $j\in \scrM$ are measurable functions.
    If $\mu$ and $\nu$ are measures on $\calX$, then the function defined by
    \begin{equation}
        f(\scrS) = \int_{\substack{(\xx, \xx')\in\calX\times\calX : \\ \vect{g}(\xx) \neq \vect{g}(\xx')}} g_{\xx,\xx'}(\scrS) \ d\mu(\xx)\nu(d\xx'),
    \end{equation}
    with
    \begin{equation}
        g_{\xx,\xx'}(\scrS) = \min\left\lbrace \frac{\Vert \mS(\xx) - \mS(\xx') \Vert_2^2 }
        {\Vert \vect{g}(\xx) - \vect{g}(\xx') \Vert_2^2},\ 
        \frac{1}{L^2} \right\rbrace,
    \end{equation}
    is normalized so that $f(\emptyset) = 0$, monotone non-decreasing so that $\scrS\subseteq\scrS'\ \Rightarrow\ f(\scrS) \leq f(\scrS')$, and submodular (Def.~\ref{def:submodular_function}).
\end{lemma}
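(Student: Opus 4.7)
The plan is to follow the exact structure of Lemma~\ref{lem:submodularity_of_detectable_differences}, since the only substantive change is replacing the concave function $y\mapsto \min\{y/\gamma^2,\ 1\}$ by a secant-dependent concave function that encodes the Lipschitz normalization. Normalization $f(\emptyset)=0$ is immediate because $\mS[\emptyset](\xx)-\mS[\emptyset](\xx')=0$ for every $\xx,\xx'$, making every integrand vanish.

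For monotonicity and submodularity, I would first reduce to a pointwise statement: if I can show that for each fixed pair $(\xx,\xx')\in\calX\times\calX$ with $\vect{g}(\xx)\neq\vect{g}(\xx')$ the function $\scrS\mapsto g_{\xx,\xx'}(\scrS)$ is monotone non-decreasing and submodular, then integrating against $d\mu(\xx)\,d\nu(\xx')$ preserves both properties, exactly as in the proof of Lemma~\ref{lem:submodularity_of_detectable_differences}.

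To handle the pointwise claim, I would exploit the block structure $\mS(\xx)=(\vect{m}_{j_1}(\xx),\ldots,\vect{m}_{j_K}(\xx))$ to write
\begin{equation}
  \Vert \mS(\xx)-\mS(\xx')\Vert_2^2 \;=\; \sum_{j\in\scrS}\Vert \vect{m}_j(\xx)-\vect{m}_j(\xx')\Vert_2^2 \;=:\; a(\scrS),
\end{equation}
which is a modular function of $\scrS$ with non-negative coefficients. Next I would write $g_{\xx,\xx'}(\scrS)=h(a(\scrS))$, where
\begin{equation}
  h(y) \;=\; \min\!\left\{ \frac{y}{\Vert \vect{g}(\xx)-\vect{g}(\xx')\Vert_2^2},\ \frac{1}{L^2}\right\}
\end{equation}
is the minimum of two affine functions and hence concave, and is also non-decreasing in $y\geq 0$. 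Monotonicity of $g_{\xx,\xx'}(\scrS)$ in $\scrS$ then follows from monotonicity of $h$ combined with the non-negative additivity of $a(\scrS)$, and submodularity follows by a direct invocation of Lemma~\ref{lem:concave_composed_with_modular_is_submodular} (concave composed with non-negative modular is submodular).

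There is no serious obstacle here: the argument is a mechanical adaptation of the proof of Lemma~\ref{lem:submodularity_of_detectable_differences}. The only small care needed is to verify that restricting the domain of integration to pairs with $\vect{g}(\xx)\neq\vect{g}(\xx')$ does not interfere with the pointwise reduction, which it does not, since the restriction is applied before $\scrS$ enters and therefore passes through the integral in the same way as the constraint $\Vert\vect{g}(\xx)-\vect{g}(\xx')\Vert_2\geq\varepsilon$ does in the detectable-differences lemma.
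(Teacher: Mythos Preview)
Your proposal is correct and essentially identical to the paper's proof: both reduce to the pointwise claim, write $\Vert \mS(\xx)-\mS(\xx')\Vert_2^2$ as a non-negative modular function of $\scrS$, observe that $h(y)=\min\{y/\Vert\vect{g}(\xx)-\vect{g}(\xx')\Vert_2^2,\ 1/L^2\}$ is concave and non-decreasing, and then invoke Lemma~\ref{lem:concave_composed_with_modular_is_submodular}. There is nothing to add.
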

\begin{proof}
Normalization is obvious.
It suffices to prove that the function $g_{\xx, \xx'}(\scrS)$ is monotone and submodular for any fixed $\xx,\xx'\in\calX$.
For if we suppose that
\begin{equation}
    \scrS\subseteq\scrS'\subseteq{\scrM}\setminus\lbrace j \rbrace \quad \Rightarrow \quad
    g_{\xx,\xx'}(\scrS\cup\lbrace j\rbrace) - g_{\xx,\xx'}(\scrS)
    \geq g_{\xx,\xx'}(\scrS'\cup\lbrace j\rbrace) - g_{\xx,\xx'}(\scrS'),
\end{equation}
then integrating both sides of the inequality proves that $f$ is submodular.
The same argument also proves monotonicity.

Let $\xx,\xx'\in\calX$ be fixed.
The squared separation between the measurements is given by a modular (Def.~\ref{def:modular_function}) sum
\begin{equation}
    \scrS \ \mapsto\ \Vert \mS(\xx) - \mS(\xx')\Vert_2^2 = \sum_{j\in\scrS} \Vert \vect{m}_j(\xx) - \vect{m}_j(\xx')\Vert_2^2
    \label{eqn:lem_modularity_of_squared_norm_2}
\end{equation}
of non-negative constants $\Vert \vect{m}_j(\xx) - \vect{m}_j(\xx')\Vert_2^2$ over each $j\in\scrS$.
Since 
\begin{equation}
    x\ \mapsto\ \min\left\lbrace \frac{x}{\Vert \vect{g}(\xx) - \vect{g}(\xx')\Vert_2^2},\ \frac{1}{L^2} \right\rbrace
    \label{eqn:lem_Lipschitz_tmp_fcn}
\end{equation}
is a non-decreasing function, it follows that $\scrS\subseteq\scrS'\ \Rightarrow\ g_{\xx, \xx'}(\scrS) \leq g_{\xx, \xx'}(\scrS')$, proving monotonicity.

Submodularity of $g_{\xx, \xx'}(\scrS)$ follows from Lemma~\ref{lem:concave_composed_with_modular_is_submodular} since $g_{\xx, \xx'}(\scrS)$ is the composition of the concave function in Eq.~\textbf{\ref{eqn:lem_Lipschitz_tmp_fcn}} with the modular function in Eq.~\textbf{\ref{eqn:lem_modularity_of_squared_norm_2}}.
\end{proof}

\section{Proofs}
\label{app:proofs}

\begin{proof}[Proposition~\ref{prop:error_tol_separation}: Separation Guarantee on Underlying Set]
The result follows immediately from the triangle inequality.
Let $\xx,\xx'\in\calX$ and $\xx_i,\xx_j\in\calX_N$ so that $\Vert \xx - \xx_i\Vert_2 < \varepsilon_0$ and $\Vert \xx' - \xx_j\Vert_2 < \varepsilon_0$.
Then $\varepsilon + 2\varepsilon_0 \Vert \vect{g}\Vert_{\text{lip}} \leq \Vert \vect{g}(\xx) - \vect{g}(\xx') \Vert_2$ implies that
\begin{equation}
\begin{split}
    \varepsilon + 2\varepsilon_0 \Vert \vect{g}\Vert_{\text{lip}} &\leq \Vert \vect{g}(\xx) - \vect{g}(\xx') \Vert_2 \\
    &\leq \Vert \vect{g}(\xx) - \vect{g}(\xx_i) \Vert_2 + \Vert \vect{g}(\xx') - \vect{g}(\xx_j) \Vert_2 + \Vert \vect{g}(\xx_i) - \vect{g}(\xx_j) \Vert_2 \\
    &< \Vert \vect{g}(\xx_i) - \vect{g}(\xx_j) \Vert_2 + 2\varepsilon_0 \Vert \vect{g}\Vert_{\text{lip}},
\end{split}
\end{equation}
hence, $\Vert \vect{g}(\xx_i) - \vect{g}(\xx_j) \Vert_2 \geq \varepsilon$.
By assumption, this implies that $\Vert \mS(\xx_i) - \mS(\xx_j) \Vert_2 \geq \gamma$ and
\begin{equation}
\begin{split}
    \gamma &\leq \Vert \mS(\xx_i) - \mS(\xx_j) \Vert_2 \\
    &\leq \Vert \mS(\xx_i) - \mS(\xx) \Vert_2 + \Vert \mS(\xx') - \mS(\xx_j) \Vert_2 + \Vert \mS(\xx) - \mS(\xx') \Vert_2 \\
    &< 2 \varepsilon_0 \Vert \mS\Vert_{\text{lip}} + \Vert \mS(\xx) - \mS(\xx') \Vert_2,
\end{split}
\end{equation}
hence, $\Vert \mS(\xx) - \mS(\xx') \Vert_2 > \gamma - 2 \varepsilon_0 \Vert \mS\Vert_{\text{lip}}$ as claimed.
\end{proof}

\begin{proof}[Proposition~\ref{prop:amplification_tol_separation}: Amplification Guarantee on Underlying Set]
The result follows immediately from the triangle inequality.
Let $\xx,\xx'\in\calX$ and $\xx_i,\xx_j\in\calX_N$ so that $\Vert \xx - \xx_i\Vert_2 < \varepsilon_0$ and $\Vert \xx' - \xx_j\Vert_2 < \varepsilon_0$, then
\begin{equation}
\begin{split}
    \Vert \vect{g}(\xx) - \vect{g}(\xx') \Vert_2 &\leq \Vert \vect{g}(\xx) - \vect{g}(\xx_i) \Vert_2 + \Vert \vect{g}(\xx') - \vect{g}(\xx_j) \Vert_2
    + \Vert \vect{g}(\xx_i) - \vect{g}(\xx_j) \Vert_2 \\
    &< 2\varepsilon_0\Vert \vect{g}\Vert_{\text{lip}} + L \Vert \mS(\xx_i) - \mS(\xx_j) \Vert_2 \\
    &\leq 2\varepsilon_0\Vert \vect{g}\Vert_{\text{lip}} + L\Vert \mS(\xx) - \mS(\xx_i) \Vert_2 + L \Vert \mS(\xx') - \mS(\xx_j) \Vert_2 \\
    & \hspace{2cm} + L\Vert \mS(\xx) - \mS(\xx') \Vert_2 \\
    &< 2\varepsilon_0\Vert \vect{g}\Vert_{\text{lip}} + 2 L \varepsilon_0\Vert \mS \Vert_{\text{lip}} + L \Vert \mS(\xx) - \mS(\xx') \Vert_2.
\end{split}
\end{equation}
Gathering terms on $\varepsilon_0$ completes the proof.
\end{proof}

\begin{proof}[Proposition~\ref{prop:noisy_separation_guarantee}: Noisy Separation Guarantee]
Choose $\vect{x}_i, \vect{x}_j\in\calX_N$ and suppose that 
\begin{equation}
    \Vert \vect{g}(\xx_i) - \vect{g}(\xx_j) \Vert_2 \geq \varepsilon + 2\delta_{v}.
\end{equation}
Then we have
\begin{equation}
\begin{split}
    \Vert \left(\vect{g}(\xx_i) + \vect{v}_i\right) - \left(\vect{g}(\xx_j) + \vect{v}_j\right) \Vert_2 &\geq \Vert \vect{g}(\xx_i) - \vect{g}(\xx_j) \Vert_2 - \Vert \vect{v}_i \Vert - \Vert \vect{v}_j \Vert \\
    &\geq \Vert \vect{g}(\xx_i) - \vect{g}(\xx_j) \Vert_2 - 2\delta_{v} \\
    &\geq \varepsilon
\end{split}
\end{equation}
By our assumption, this implies 
\begin{equation}
    \Vert \left(\mS(\xx_i) + \vect{u}_{i,\scrS}\right) - \left(\mS(\xx_j) + \vect{u}_{j,\scrS}\right) \Vert_2 \geq \gamma,
\end{equation}
and so we have
\begin{equation}
\begin{split}
    \Vert \mS(\xx_i) - \mS(\xx_j) + \vect{u}_{j,\scrS} \Vert_2 &\geq 
    \Vert \left(\mS(\xx_i) + \vect{u}_{i,\scrS}\right) - \left(\mS(\xx_j) + \vect{u}_{j,\scrS}\right) \Vert_2 - \Vert \vect{u}_{i,\scrS} \Vert - \Vert \vect{u}_{j,\scrS} \Vert \\
    &\geq \gamma - 2\delta_{u}.
\end{split}
\end{equation}
Therefore, we have established that
\begin{multline}
    \forall \vect{x}_i, \vect{x}_j\in\calX_N \qquad 
    \Vert \vect{g}(\xx_i) - \vect{g}(\xx_j) \Vert_2 \geq \varepsilon + 2\delta_{v} \\
    \Rightarrow \quad \Vert \mS(\xx_i) - \mS(\xx_j) + \vect{u}_{j,\scrS} \Vert_2 \geq \gamma - 2\delta_{u}.
\end{multline}
The conclusion follows immediately by Proposition~\ref{prop:error_tol_separation}.
\end{proof}

\begin{proof}[Theorem~\ref{thm:minimal_sensing_sampled_amplification_tolerance}: Down-Sampled Amplification Guarantee]
For simplicity, we will drop $L$ from the subscript on our objective since the threshold $L$ for the Lipschitz constant remains fixed throughout the proof.
Let us begin by fixing a set $\scrS\subseteq\scrM$ and define the random variables
\begin{equation}
    Z_{\scrS}(\vect{b}_i) = \max_{\xx\in\calX_N} \bbone\big\lbrace \Vert \vect{g}(\vect{b}_i) - \vect{g}(\xx)\Vert_2 > L \Vert \mS(\vect{b}_i) - \mS(\xx)\Vert_2 \big\rbrace,
\end{equation}
for $i=1,\ldots,m$.
If $Z_{\scrS}(\vect{b}_i) = 0$ then every secant between $\vect{b}_i$ and points of $\calX_N$ satisfies the desired bound on the amplification.
Otherwise, there is some point $\xx\in\calX$ for which 
\begin{equation}
    \Vert \vect{g}(\vect{b}_i) - \vect{g}(\xx)\Vert_2 > L \Vert \mS(\vect{b}_i) - \mS(\xx)\Vert_2
\end{equation}
and so $Z_{\scrS}(\vect{b}_i) = 1$.
We observe that $Z_{\scrS}(\vect{b}_1),\ldots,Z_{\scrS}(\vect{b}_m)$ are independent, identically distributed Bernoulli random variables whose expectation
\begin{multline}
    \mathbb{E}[Z_{\scrS}(\vect{b}_i)] = \mu \big(\big\lbrace \xx\in\calX\ :\ \exists\xx_j\in\calX_N\quad \mbox{s.t.} \\
    \Vert \vect{g}(\xx) - \vect{g}(\xx_j)\Vert_2 > L \Vert \mS(\xx) - \mS(\xx_j)\Vert_2 \big\rbrace \big)
\end{multline}
is the $\mu$-measure of points in $\calX$ that are not adequately separated from points in the $\varepsilon_0$-net $\calX_N$ by the measurements $\mS$.
Suppose that for a fixed $\xx\in\calX$ we have
\begin{equation}
    \Vert \vect{g}(\xx) - \vect{g}(\xx_j)\Vert_2 \leq L \Vert \mS(\xx) - \mS(\xx_j)\Vert_2
\end{equation}
for every $\xx_j\in\calX_N$.
By definition of $\calX_N$, for any $\xx'\in\calX$, there is an $\xx_j\in\calX_N$ with $\Vert \xx' - \xx_j\Vert_2 < \varepsilon_0$ and so we have
\begin{equation}
\begin{split}
    \Vert \vect{g}(\xx) - \vect{g}(\xx')\Vert_2 &\leq \Vert \vect{g}(\xx) - \vect{g}(\xx_j)\Vert_2 + \Vert \vect{g}(\xx_j) - \vect{g}(\xx')\Vert_2 \\
    &< L\Vert \mS(\xx) - \mS(\xx_j)\Vert_2 + \varepsilon_0\Vert \vect{g}\Vert_{\text{lip}} \\
    &\leq L\Vert \mS(\xx) - \mS(\xx')\Vert_2 + L\Vert \mS(\xx') - \mS(\xx_j)\Vert_2 + \varepsilon_0\Vert \vect{g}\Vert_{\text{lip}} \\
    &< L\Vert \mS(\xx) - \mS(\xx')\Vert_2 + \left( \Vert \vect{g}\Vert_{\text{lip}} + L\Vert \mS \Vert_{\text{lip}}  \right) \varepsilon_0.
\end{split}
\end{equation}
It follows that $\mathbb{E}[Z_{\scrS}(\vect{b}_i)]$ is an upper bound on the $\mu$-measure of points in $\calX$ for which the relaxed amplification threshold is exceeded, that is,
\begin{multline}
    \mathbb{E}[Z_{\scrS}(\vect{b}_i)] \geq \mu \big(\big\lbrace \xx\in\calX\ :\ \exists\xx'\in\calX \quad \mbox{s.t.} \quad
    \Vert \vect{g}(\xx) - \vect{g}(\xx')\Vert_2  \\
    \geq L \Vert \mS(\xx) - \mS(\xx')\Vert_2 + \left( \Vert \vect{g}\Vert_{\text{lip}} + L\Vert \mS \Vert_{\text{lip}}  \right) \varepsilon_0 \big\rbrace \big).
    \label{eqn:thm_measure_of_points_with_large_amplification}
\end{multline}

By assumption, we have a set $\scrS\subseteq\scrM$ so that $Z_{\scrS}(\vect{b}_i) = 0$ for each $i=1,\ldots,m$.
And so it remains to bound the difference between the empirical and true expectation of $Z_{\scrS}(\vect{b}_i)$ uniformly over every subset $\scrS\subseteq\scrM$.
For fixed $\scrS$, the one-sided Hoeffding inequality gives
\begin{equation}
    \mathbb{P}\Big\lbrace \frac{1}{m}\sum_{i=1}^m\left( \mathbb{E}[Z_{\scrS}(\vect{b}_i)] - Z_{\scrS}(\vect{b}_i) \right) \geq \delta \Big\rbrace \leq e^{-2 m \delta^2}.
\end{equation}
Unfixing $\scrS$ via the union bound over all $\scrS\subseteq\scrM$ and applying our assumption about the number of base points $m$ yields
\begin{equation}
    \mathbb{P}\bigcup_{\scrS\subseteq\scrM}\Big\lbrace \frac{1}{m}\sum_{i=1}^m\left( \mathbb{E}[Z_{\scrS}(\vect{b}_i)] - Z_{\scrS}(\vect{b}_i) \right) \geq \delta \Big\rbrace
    \leq e^{ \#(\scrM)\ln{2} - 2 m \delta^2}
    \leq p.
\end{equation}
Since our assumed choice of $\scrS$ has $f_m(\scrS) = f_m(\scrM)$ it follows that all $Z_{\scrS}(\vect{b}_i) = 0$, $i=1,\ldots,m$, hence we have
\begin{equation}
    \mathbb{E}[Z_{\scrS}(\vect{b}_i)] < \delta
\end{equation}
with probability at least $1-p$.
Combining this with Eq.~\textbf{\ref{eqn:thm_measure_of_points_with_large_amplification}} completes the proof.
\end{proof}

\section{Description of the Accelerated Greedy Algorithm}
\label{subapp:AG_algorithm}
Since each objective function $f$ presented in Section~\ref{sec:secants} is submodular, it is possible to use an ``accelerated greedy'' (AG) algorithm to obtain the same solution as the naive greedy algorithm with a provably minimal number of objective function evaluations compared to a broad class of algorithms \cite{Minoux1978accelerated}.
Let the increase in the objective function obtained by adding the sensor $j$ to the set $\scrS$ be called $\Delta_j(\scrS) = f(\scrS\cup\lbrace j\rbrace) - f(\scrS)$.
Instead of evaluating $\Delta_j(\scrS_{k-1})$ for every measurement in $\scrM\setminus\scrS_{k-1}$, AG keeps track of an upper bound $\hat{\Delta}_j \geq \Delta_j(\scrS_{k-1})$ on the increments for each sensor.
Since submodularity of $f$ means that the increments $\Delta_j(\scrS)$ can only decrease as the size of $\scrS$ increases, it is sufficient to have the maximum upper bound $\hat{\Delta}_{j^*} \geq \hat{\Delta}_{j}$, $\forall j\in\scrM\setminus\scrS_{k-1}$ be tight $\hat{\Delta}_{j^*} = \Delta_{j^*}(\scrS_{k-1})$ in order to conclude that $\Delta_{j^*}(\scrS_{k-1})$ is the largest increment.
The rest of the upper bounds on the increments can remain loose since they are smaller than the tight maximum upper bound.
The AG algorithm finds largest upper bound $\hat{\Delta}_{j^*}$ and updates it so that it is tight.
If $\hat{\Delta}_{j^*}$ is still the greatest upper bound, then $j^* = j_k$ achieves the largest increment and is added to $\scrS_{k-1}$.
Otherwise if $\hat{\Delta}_{j^*}$ is no longer the largest upper bound, the new largest upper bound is selected and process repeated until a tight maximum upper bound is obtained.


\end{document}